


\documentclass[12pt]{article}

\usepackage[all]{xypic}
\usepackage{amsmath}
\usepackage{amsfonts}
\usepackage{amssymb}
\usepackage{amsthm}

\usepackage[top=2cm, bottom=2cm, left=2cm, right=2cm]{geometry}

\newtheorem{thm}{Theorem}[section]
\newtheorem{mthm}{Theorem}

\newtheorem{crl}[thm]{Corollary}
\newtheorem{mcrl}[mthm]{Corollary}

\newtheorem{prp}[thm]{Proposition}
\newtheorem{mprp}[mthm]{Proposition}

\newtheorem{lmm}[thm]{Lemma}
\newtheorem{mlmm}[mthm]{Lemma}

\newtheorem{mconj}[mthm]{Conjecture}

\newtheorem{rmk}[thm]{Remark}
\newtheorem{mrmk}[mthm]{Remark}

\newtheorem{note}[thm]{Note}


\newcommand {\tb}{\textbf}
\newcommand {\mb}{\mathbb}
\newcommand {\Z}{\mb Z}
\newcommand {\R}{\mb R}

\newcommand {\im}{\textrm{im}}
\newcommand {\colim}{\textrm{colim}\ }

\newcommand {\ex}{\mathrm{excess}}
\newcommand {\ext}{\mathrm{Ext}}
\newcommand {\lra}{\longrightarrow}
\newcommand {\la}{\langle}
\newcommand {\ra}{\rangle}


\begin{document}

\title{On the Hurewicz homomorphism on the extensions of ideals in $\pi_*^s$ and spherical classes in $H_*Q_0S^0$}

\author{Hadi Zare\\
        School of Mathematics, Statistics,
        and Computer Sciences\\
        University of Tehran, Tehran, Iran\\
        \textit{email:hadi.zare} at \textit{ut.ac.ir}}
\date{}

\maketitle

\begin{abstract}
This is about Curtis conjecture on the image of the Hurewicz map $h:{_2\pi_*}Q_0S^0\to H_*(Q_0S^0;\Z/2)$. First, we show that if $f\in{_2\pi_*^s}$ is of Adams filtration at least $3$ with $h(f)\neq 0$ then $f$ is not a decomposable element in ${_2\pi_*^s}$. Moreover, it is shown if $k$ is the least positive integer that $f$ is represented by a cycle in $\ext^{k,k+n}_A(\Z/2,\Z/2)$, then (i) if $e_*h(f)\neq 0$ then $n\geqslant 2^k-1$; (ii) if $e_*h(f)=0$ then $n\geqslant 2^k-2^t$ for some $t>1$. Second, for $S\subseteq{_2\pi_{*>0}^s}$ we show that: (i) if the conjecture holds on $S$, then it holds on $\la S\ra$; (ii) if $h(S)=0$ then $h$ acts trivially on any extension of $S$ obtained by applying homotopy operations arising from ${_2\pi_*}D_rS^n$ with $n>0$. We also provide partial results on the extensions of $\la S\ra$ by taking (possible) Toda brackets of its elements. We also discuss how the $EHP$-sequence information maybe applied to eliminate classes from being spherical.
\end{abstract}

\tableofcontents

\section{Introduction and statement of results}
This note is circulated around Curtis conjecture; some of the observations here might be well known, but we don't know of any published account. Let $Q_0S^0$ be the base point component of $QS^0=\colim \Omega^iS^i$ corresponding to stable maps $S^0\to S^0$ of degree $0$. The conjecture then reads as follows (see \cite[Proposition 7.1]{Curtis} and \cite{Wellington} for more discussions).
\begin{mconj}[The Curtis Conjecture]\label{conjecture}
In positive degrees only the Hopf invariant one and Kervaire invariant one elements survive under the Hurewicz homomorphism $h:{_2\pi_*}Q_0S^0\lra H_*(Q_0S^0;\Z/2)$.
\end{mconj}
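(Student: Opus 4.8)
The plan is to reduce the conjecture to the nonexistence of exotic spherical classes and to attack the latter by stratifying the Hurewicz image by Adams filtration. First I would record that every class in the Hurewicz image is primitive: if $f\in{_2\pi_n}Q_0S^0$ is represented by $g\colon S^n\to Q_0S^0$, then $h(f)=g_*(\iota_n)$ for the fundamental class $\iota_n$, and since the reduced coproduct vanishes on $\tilde H_*(S^n;\Z/2)$ and $g_*$ is a map of coalgebras, $h(f)$ is primitive. Because $Q_0S^0$ is an infinite loop space, $H_*(Q_0S^0;\Z/2)$ is the translate to the degree-zero component of the free allowable algebra over the Dyer--Lashof operations on a single generator, so its module of primitives has an explicit additive basis described through admissible Dyer--Lashof monomials $Q^I[1]$. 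The conjecture is thereby reduced to showing that the only primitives in positive degrees hit by $h$ are those realizing the Hopf invariant one elements $\eta,\nu,\sigma$ and the Kervaire invariant one elements $\theta_j$.

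Second, I would filter the problem by the Adams filtration $k$ of $f$, using the identification ${_2\pi_n}Q_0S^0\cong{_2\pi_n^s}$ for $n>0$ to view $f$ as a stable stem. For $k=1$ the relevant elements are detected in $\ext^{1,1+n}_A(\Z/2,\Z/2)$, i.e. by the classes $h_i$; by Adams' Hopf invariant one theorem the only permanent cycles among them are $h_0,h_1,h_2,h_3$, giving exactly $\eta,\nu,\sigma$ in positive degrees. For $k=2$ the candidates are the $h_i^2$, whose surviving representatives are precisely the Kervaire invariant one elements. The work then reduces to filtration $k\geqslant 3$.

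Third, for $k\geqslant 3$ I would bring in the two structural results stated above. The decomposability result shows that such an $f$ with $h(f)\neq 0$ must be indecomposable in ${_2\pi_*^s}$, which severely restricts the Dyer--Lashof monomials that can occur in $h(f)$. The filtration--degree inequalities --- $n\geqslant 2^k-1$ when $e_*h(f)\neq 0$, and $n\geqslant 2^k-2^t$ with $t>1$ when $e_*h(f)=0$ --- force the stem $n$ to grow geometrically with $k$, so that for each fixed $k$ only finitely many stems are numerically admissible, and these can be matched against the basis of primitives. The aim is then to combine these constraints with the Lambda algebra description of the image of the algebraic Hurewicz homomorphism to show that no admissible primitive of filtration $\geqslant 3$ is a permanent cycle.

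The hard part --- the part that keeps the conjecture open --- is exactly this last step: eliminating every exotic spherical class of Adams filtration at least $3$. Indecomposability and the degree bounds are only necessary conditions, and upgrading them to a nonexistence statement would require control of ${_2\pi_*^s}$ across an unbounded range of filtrations, which is presently unavailable; the difficulty already contains the Kervaire invariant problem and, in effect, the determination of the full image of the Hurewicz homomorphism. I therefore expect a complete proof to remain out of reach, with the realistic contribution being the package of partial results above, which confines any counterexample to indecomposable, high-filtration classes obeying the stated numerical constraints.
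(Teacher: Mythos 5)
You have not proved the statement, and you could not have: it is a conjecture, and the paper contains no proof of it either --- only partial results. Your final paragraph concedes this, and your overall reconstruction of the paper's framework is accurate: the decomposability obstruction (any $f$ of Adams filtration $>2$ with $h(f)\neq 0$ is indecomposable in ${_2\pi_*^s}$, which the paper deduces in three lines from its Theorem on products and the Hurewicz map) and the numerical bounds $n\geqslant 2^k-1$, resp.\ $n\geqslant 2^k-2^t$, are exactly the paper's Theorems, and the step you flag as the hard part --- eliminating every indecomposable permanent cycle of filtration $\geqslant 3$ --- is precisely what remains open. So the genuine gap is the one you name yourself; no referee could accept the outline as a proof, and you rightly do not claim it as one.

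Two points in your stratification are, however, misstated in ways worth correcting. First, your filtration-$2$ step is too quick: the permanent cycles on the $2$-line are not only the $h_i^2$. Mahowald's family $\eta_j$ (detected by $h_1h_j$), the $\mu_i$ family, and $\nu^*\in{_2\pi_{18}^s}$ (and $\eta^*=\eta_4$) all live there, and showing they die under $h$ is real work, not bookkeeping --- the paper devotes separate arguments to $h(\eta_i)=0$ (via Mahowald's factorization of $\eta_i$ through the complex $X_i$, whose top cell sits below dimension $2^i$) and to $h(\nu^*)=0$ (via the Toda bracket $\{\sigma,2\sigma,\nu\}$ and a delicate computation with Nishida relations killing $\overline{\sigma}_*g_{18}$). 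Saying the filtration-$2$ survivors ``are precisely the Kervaire invariant one elements'' assumes exactly these computations. Second, your case (ii) bound omits the paper's hypotheses: the inequality $n\geqslant 2^k-2^t$ is proved only under the assumptions $h(f)=\xi^{2^t}$ with $e_*\xi\neq 0$ and $h(f)\in\im(\varphi_k)$, and the paper explicitly cautions that the relation between $\oplus_k\varphi_k$ and $h$ is not linear --- indecomposability in ${_2\pi_*^s}$ does not directly constrain the Dyer--Lashof monomials in $h(f)$ (indeed $h_j^2$ is decomposable in $\ext$ while $\theta_j$ is indecomposable in ${_2\pi_*^s}$), so your assertion that indecomposability ``severely restricts'' the admissible monomials is not justified by anything in the paper.
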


Note that given $f,g\in{_2\pi_*}Q_0S^0$ with $h(f)\neq 0$ and $h(g)=0$ then $h(f+g)=h(f)\neq 0$. Also, note that if $\alpha:S^0\not\lra S^0$ is any map of odd degree, then $h(\alpha f)=h(f)$. Finally, note that the Hurewicz homomorphisms ${\pi_*}Q_0S^0\to H_*(Q_0S^0;\Z)$ and ${_2\pi_*}Q_0S^0\to H_*(Q_0S^0;\Z/2)$ are $\Z$-module homorphisms, and not ring homomorphisms (see Theorem \ref{main0} below). These hopefully will justify the way that we have stated our results. The above conjecture has a variant for path connected CW-complexes due to P. J. Eccles. For $X$, let $QX=\colim \Omega^i\Sigma^iX$. We may state the conjecture as follows.

\begin{mconj}[The Eccles Conjecture]\label{Ecclesconjecture}
Suppose $X$ is a path connected CW-complex of finite type. If $\alpha\in{_2\pi_nQX}$, $n>0$, maps nontrivially under the Hurewicz homomorphism $h:{_2\pi_*}QX\lra H_*(QX;\Z/2)$ then $\alpha$ is either stable spherical or its stable adjoint is detected by a primary operation in its (stable) mapping cone. Here, stably spherical means that the stable adjoint of $\alpha$, $S^n\to X$ is detected by homology.
\end{mconj}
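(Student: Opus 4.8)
The plan is to convert the two alternatives of the conclusion into the weight (Snaith) filtration of $H_*(QX;\Z/2)$ and to read off which weight carries the Hurewicz image. Recall that $\pi_nQX\cong{_2\pi_n^s}X$, and under this identification the stable adjoint of $\alpha$ is the component $f\colon S^n\to X$. Recall also the Snaith splitting $\Sigma^\infty QX\simeq\bigvee_{r\geqslant 0}\Sigma^\infty D_rX$, with $D_0X=S^0$, $D_1X=X$, and $D_rX=(E\Sigma_r)_+\wedge_{\Sigma_r}X^{\wedge r}$ for $r\geqslant 2$; additively this gives $H_*(QX;\Z/2)\cong\bigoplus_r\widetilde H_*(D_rX;\Z/2)$, the weight filtration, which refines the Dyer--Lashof description of $H_*(QX)$ as the free allowable $R$-algebra on $\widetilde H_*(X;\Z/2)$. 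Stabilising $\alpha$ and projecting onto the wedge summands produces components $\alpha_r\in{_2\pi_n^s}(D_rX)$, the James--Hopf invariants of $f$, and the weight-$r$ part of $h(\alpha)$ is exactly the stable Hurewicz image of $\alpha_r$ in $\widetilde H_n(D_rX;\Z/2)$.

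First I would dispose of weight one. The component $\alpha_1$ is $f$ itself, so the weight-one part of $h(\alpha)$ is the stable Hurewicz image of $f$; it is nonzero precisely when $f$ is stably spherical, which is the first alternative. Thus we may assume $h(\alpha)$ has trivial weight-one part, and since $h(\alpha)\neq 0$ there is a least $r\geqslant 2$ with $h(\alpha)$ having nonzero weight-$r$ part, i.e.\ with $\alpha_r$ stably spherical in $D_rX$.

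The heart is the case $r=2$, where I would exhibit the promised primary operation. Here $\widetilde H_*(D_2X;\Z/2)$ is spanned by the weight-two Dyer--Lashof classes $Q^{|x|+i}x$ ($x$ running over a basis of $\widetilde H_*(X;\Z/2)$, $i\geqslant 0$); a spherical $\alpha_2$ forces $h(\alpha)$ to contain such a class as a nonzero primitive annihilated by all lower operations $Sq^s_*$ ($s>0$), since spherical classes are primitive and come from a sphere. I would then feed this into the Boardman--Steer description of the Hopf invariant $\alpha_2$ and the functional Steenrod operations in the mapping cone $C_f=X\cup_f e^{n+1}$: a nonzero weight-two Hurewicz image picks out an $x\in\widetilde H^*(X)$ and an $i$ with $Sq^i x=u_{n+1}$ in $H^*(C_f;\Z/2)$, where $u_{n+1}$ is the top class; this is exactly the statement that $f$ is detected by a primary operation in $C_f$. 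The Nishida relations provide the algebraic bridge, translating ``$Q^{|x|+i}x$ is a nonzero $Sq^s_*$-annihilated primitive in the Hurewicz image'' into the nonvanishing of a functional Steenrod operation on the bottom class.

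The main obstacle is twofold, and it is where the conjecture remains genuinely open. First, one must show that, once the weight-one part vanishes, the least surviving weight cannot be $\geqslant 3$ without already producing a primary operation; equivalently one must rule out that the first nonzero James--Hopf invariant lives in weight $\geqslant 3$ and is detected only by secondary (or higher) phenomena. Second, and more fundamentally, the argument of the previous paragraph presumes a classification of which classes in $\widetilde H_*(D_rX;\Z/2)$ can be spherical, and no such classification is available for $r\geqslant 3$; this is precisely the difficulty that keeps both the Curtis and Eccles conjectures unresolved. The realistic route to partial results is therefore to combine the weight reduction above with filtration and excess estimates of the type in the first theorem of the abstract --- bounding the Adams filtration and the excess of a surviving class forces $r$ to be small and lets one run the weight-two analysis --- together with the triviality statement for the $D_rS^n$-homotopy operations, so that the conjecture is established on the subclasses of ${_2\pi_*^s}X$ where the weight of any possible Hurewicz image is a priori bounded.
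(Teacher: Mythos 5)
This statement is a \emph{conjecture}: the paper states it (following Eccles) as motivation and proves only partial results around it (Theorems on decomposables, ideals, Toda brackets, Adams filtration); it contains no proof of the conjecture itself, and none is known. So your proposal cannot be matched against a paper proof, and it is not in fact a proof --- as you yourself concede in the final paragraph. Your framework is the standard one and is consistent with the paper's toolkit: identify ${_2\pi_n}QX\cong{_2\pi_n^s}X$, use the Snaith splitting $\Sigma^\infty QX\simeq\bigvee_r\Sigma^\infty D_rX$ to decompose $h(\alpha)$ by weight, observe that the weight-$r$ component is the stable Hurewicz image of the $r$-th James--Hopf invariant $\alpha_r\in{_2\pi_n^s}D_rX$, and note that weight one is exactly the ``stably spherical'' alternative. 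That reduction is correct and matches the paper's use of the height filtration and the maps $j_r$.

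The genuine gaps are the two you name, and they are fatal to this being a proof rather than a program. First, your weight-two step invokes a generalised Boardman--Steer statement --- ``nonzero spherical weight-two component $Q^{|x|+i}x$ if and only if $f$ is detected by a primary operation in $C_f$'' --- for an arbitrary path connected $X$; this is plausible and known in special cases (e.g.\ $X$ a sphere, where it is the classical stable Hopf invariant, cf.\ the paper's use of \cite[Proposition 6.1.5]{Harper}), but ``the Nishida relations provide the algebraic bridge'' is an assertion, not an argument, and one must also control the interaction with decomposable and lower-weight indeterminacy in $h(\alpha)$. Second, and irreparably, nothing in your argument excludes the first nonvanishing spherical James--Hopf invariant from sitting in weight $r\geqslant 3$ without any primary detection: there is no classification of spherical classes in $\widetilde H_*(D_rX;\Z/2)$ for $r\geqslant 3$, and ruling this out is precisely the open content of the Curtis and Eccles conjectures (for $X=S^0$ it is the assertion that only the classes coming from $\eta,\nu,\sigma$ and the $\theta_j$ survive). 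Your closing suggestion --- combine the weight reduction with Adams-filtration and excess bounds of the kind proved in this paper to force $r\leqslant 2$ on restricted families --- is a reasonable description of how partial results are actually obtained here, but it does not close the general case, so the proposal should be read as a strategy outline for an open problem, not a proof.
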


A schematic diagram for the relation between the two conjectures may be state as follows
$$\begin{array}{ccc}
\textrm{Curtis conjecture}&\Rightarrow&\left.\begin{array}{c}\textrm{Eccles conjecture for}\\X=S^n, n>0\end{array}\right.\\ \\
\left.\begin{array}{c}\textrm{Eccles conjecture for}\\X=\R P\end{array}\right.&\Rightarrow&\textrm{Curtis conjecture}
\end{array}$$
where the passage from the Eccles conjecture to the Curtis conjecture is throughout the Kahn-Priddy theorem \cite[Theorem 3.1]{Kahn-Priddy} and Lin's variant of it \cite[Theorem 1.1]{Lin}; the passage from Curtis conjecture to Eccles conjecture is through the fact the Kervaire invariant one elements map to square terms in $H_*(Q_0S^0;\Z/2)$, and consequently are killed by homology suspension, and only Hopf invariant one elements do survive.\\

Here and throughout the paper, we write $\la S\ra$ for the ideal generated by a set $S$ in a ring $R$, $\pi_*$ and $\pi^s_*$ for homotopy and stable homotopy respectively, and ${_p\pi_*},{_p\pi_*^s}$ for their $p$-primary components, respectively. We write $\pi_{*>0}^s$ for $\oplus_{i>0}\pi_i^s$. We shall use $f_\#$ to denote the mapping induced in homotopy, and $f_*$ for the mapping induced in homology, where $f$ is a mapping of spaces or stable complexes, and $C_f$ will refer to (stable) mapping cone of $f$. To avoid confusion, we write $\int$ for summation, $\Sigma$ for the suspension functor on the category of pointed spaces as well as spectra, and $\Sigma_*:\widetilde{H}_nX\to H_{n+1}\Sigma X$ for the suspension isomorphism. Finally, $\eta,\nu,\sigma$ will denote the well known Hopf invariant one elements. We also write $e$ for the evaluation map, and $e_*$ for the homology suspension, so that it is not confused with the homology of the Hopf map $\sigma\in{\pi_7^s}$. Our fist observation is the following.

\begin{mthm}\label{main0}
(i) For $i,j>0$, consider the composition
$${\pi_i}Q_0S^0\otimes {\pi_j}Q_0S^0\lra {\pi_{i+j}}Q_0S^0\stackrel{h}{\lra} H_{i+j}(Q_0S^0;\Z)$$
where the first arrow is the product in $\pi_*^s$. Then $h(fg)\neq 0$ only if $f$ and $g$ live in the same grading, and both are detected by the unstable Hopf invariant.\\
(ii) For $i,j>0$, consider the composition
$${_2\pi_i}Q_0S^0\otimes {_2\pi_j}Q_0S^0\lra {_2\pi_{i+j}}Q_0S^0\stackrel{h}{\lra} H_{i+j}(Q_0S^0;\Z/2).$$
Then $h(fg)\neq 0$ only if $f=g$ with $f=\eta,\nu,\sigma$ or odd multiples of these elements, i.e. the image of this composition only consists of Kerviare invariant one elements $h(\eta^2),h(\nu^2),h(\sigma^2)$. Here, the first arrow on the left is the multiplication on the stable homotopy ring. Also, the image of the composite
$$\la f:f=\eta,\nu,\sigma\ra\rightarrowtail{_2\pi_*}Q_0S^0\stackrel{h}{\lra}H_*(Q_0S^0;\Z/2)$$
only consists of the Hurewicz image of the Hopf invariant one elements $\eta,\nu,\sigma$, and the Kervaire invariant one elements $\eta^2,\nu^2,\sigma^2$.\\
(iii) Suppose $E$ is a $CW$-spectrum, and let $\Omega^\infty E=\colim\Omega^nE_n$. Then for the composition
$$h^\circ:\pi_iQ_0S^0\otimes\pi_j\Omega^\infty E\to\pi_{i+j}\Omega^\infty E\lra H_*(\Omega^\infty E;\Z)$$
with $i>0$, we have
$$\begin{array}{ccl}
i<j &\Rightarrow& h^\circ(\alpha\circ f)=0,\\
i>j &\Rightarrow& e_*^{i-j+1}h^\circ(\alpha\circ f)=0.
\end{array}$$
Moreover, if $i=j$ and $h^\circ(\alpha\circ f)\neq 0$ then $\alpha$ is detected by the unstable Hopf invariant; in particular at the prime $2$, $\alpha\in\{\eta,\nu,\sigma\}$.\\
Here, $\circ$ is the product coming from the composition of stable maps $S^{i+j}\to S^j$ and $S^j\to X$ turning $\pi_*\Omega^\infty E$ into a left $\pi_*^s$-module.
\end{mthm}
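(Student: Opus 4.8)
The plan is to establish (iii) in full and then read off (i) and (ii) as the special case $E=\Sigma^\infty S^0$ combined with the graded commutativity of the composition product. The starting point is to replace the nonlinear composite by a universal computation in $QS^j$. Under the isomorphism $\pi_{i+j}QS^j\cong\pi_i^s$ I would choose an unstable representative $a\colon S^{i+j}\to QS^j$ of $\alpha$, and let $\tilde f\colon\Sigma^\infty S^j\to E$ be the stable adjoint of $f$. Then $\alpha\circ f$ is represented by $S^{i+j}\xrightarrow{a}QS^j\xrightarrow{\Omega^\infty\tilde f}\Omega^\infty E$, so that $h^\circ(\alpha\circ f)=(\Omega^\infty\tilde f)_*h(a)$. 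Because $\Omega^\infty\tilde f$ is an infinite loop map, $(\Omega^\infty\tilde f)_*$ is natural for the homology suspension $e_*$; hence everything is governed by the single class $h(a)\in H_{i+j}(QS^j)$ and by how $e_*$ acts on it.

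The homological input I would use is the Dyer--Lashof / Snaith description of $H_*QS^m$: the $k$-th Snaith summand $D_kS^m$ has its bottom cell in degree $km$, so $\widetilde H_n(QS^m)=0$ for $m<n<2m$ and $\widetilde H_{2m}(QS^m)$ is cyclic, generated by $\iota_m^2=Q^m\iota_m$. For $i<j$ this forces $h(a)\in H_{i+j}(QS^j)=0$, hence $h^\circ(\alpha\circ f)=0$. For $i>j$ I would iterate the homology suspension: since $e_*$ is compatible with the Hurewicz map and with homotopy suspension, $e_*^kh(a)=h(a^{(k)})$ with $a^{(k)}\in\pi_{i+j+k}QS^{j+k}\cong\pi_i^s$ still representing $\alpha$; as soon as $k\ge i-j+1$ one has $i+j+k<2(j+k)$, the target group vanishes, and therefore $e_*^{i-j+1}h^\circ(\alpha\circ f)=0$.

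The diagonal case $i=j$ is where the Hopf invariant enters, and it is the main obstacle. Here $h(a)$ lies in the cyclic group $\widetilde H_{2j}(QS^j)$ generated by $\iota_j^2$, so $h(a)=\lambda\,\iota_j^2$; I would identify the coefficient $\lambda$ with the stable Hopf invariant of $\alpha$, i.e. with the second James--Hopf invariant of $a$, equivalently with the nontriviality of $Sq^{j+1}$ in the mapping cone $C_\alpha$. Adams' Hopf invariant one theorem then makes $Sq^{j+1}$ indecomposable only for $j+1\in\{2,4,8\}$, so $\lambda\neq0$ forces $j\in\{1,3,7\}$ and $\alpha\in\{\eta,\nu,\sigma\}$ up to odd multiples, which is precisely the assertion that $\alpha$ is detected by the unstable Hopf invariant. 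Pinning down this identification of $\lambda$, and carefully justifying the compatibility of $e_*$ with the Hurewicz map and with homotopy suspension, is the delicate part; the remainder is formal.

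Finally I specialize to $E=\Sigma^\infty S^0$, where $\Omega^\infty E=QS^0$ and $\alpha\circ f$ is the ring product $fg$ in $\pi_*^s$. The case $i<j$ of (iii) gives $h(fg)=0$, and applying it to $gf$ together with $h(fg)=\pm h(gf)$ eliminates $i>j$ as well, so $h(fg)\neq0$ forces $i=j$ and, by the diagonal case, forces both $f$ and $g$ to be detected by the unstable Hopf invariant; this is (i), and at the prime $2$ the equal degrees force $f=g\in\{\eta,\nu,\sigma\}$ up to odd multiples, so $fg=f^2$ is Kervaire, giving the first part of (ii). For the ideal statement I would expand an element of $\la\eta,\nu,\sigma\ra$ as a sum of terms $x\gamma$ with $\gamma\in\{\eta,\nu,\sigma\}$, split each $x$ into its degree-zero and positive-degree parts, and apply additivity of $h$: the positive-degree parts survive only when $x$ is an odd multiple of $\gamma$ (contributing $h(\gamma^2)$) by the first part of (ii), while the degree-zero parts contribute odd multiples of $h(\gamma)$; since $h(\eta),h(\nu),h(\sigma),h(\eta^2),h(\nu^2),h(\sigma^2)$ occupy pairwise distinct degrees, the image is exactly these six classes together with $0$.
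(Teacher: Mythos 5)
Your proposal is correct and follows essentially the same route as the paper: off-diagonal vanishing via Freudenthal/connectivity of $QS^j$, the diagonal case by identifying the Hurewicz coefficient of $\iota_j^2$ with the (James--)Hopf invariant and invoking Adams, graded commutativity to dispose of $i>j$ in (i)--(ii), and degree bookkeeping for the ideal statement; the paper merely proves (i)--(ii) first (Lemma \ref{ij} and Theorem \ref{nilpotence}, factoring the adjoint through $\Omega S^{j+1}$ and citing Harper's cup-square criterion \cite{Harper}) and then handles (iii) by the same argument, whereas you take (iii) as the master statement and use the Snaith summand description of $H_*QS^j$ in place of the Freudenthal factorisation. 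The one point to patch is the integral diagonal case with $j$ even, where $H_{2j}(QS^j;\Z)\cong\Z$ and $Sq^{j+1}$ only sees $\lambda$ mod $2$: to rule out $\lambda\in 2\Z\setminus\{0\}$ you need the cup-square identification used by the paper (the square $g_{j+1}^2$ of an odd-degree integral class is torsion, hence zero in $H^{2j+2}(C_{\widetilde{f}};\Z)\cong\Z$), or equivalently Serre's finiteness of $\pi_{2j+1}S^{j+1}$ for $j$ even; for $j$ odd, where $H_{2j}(QS^j;\Z)\cong\Z/2$, your Adams argument suffices as written.
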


Note that a part of the statement of $(iii)$ stays valid if we replace $H\Z/2$-homology with any generalised homology $R$, obtained from a ring spectrum with unit, for the $R$-Hurewicz homomorphism
$$h_R:\pi_*\Omega^\infty E \lra R_*\Omega^\infty E$$
if any $f:S^{n+i}\to S^i$, with $n>0$, induces a trivial map in $R_*$-homology, at least after localisation at $2$. At least case $i<j$ of $(iii)$ stays valid for any such homology theory. There exists example of computing Hurewicz image in some `interesting' cases (for example see \cite{Snaith-BP}) but we don't know if they have been interpreted in terms of homology of infinite loop spaces. Our first application of this theorem is to provide some necessary conditions for an element $f\in{_2\pi_*^s}$ to map nontrivially under $h$ which we state as follows.
\begin{mthm}\label{necessary}
Suppose $f\in{_2\pi_*^s}$ is of Adams filtration $>2$, and $h(f)\neq 0$. Then, $f$ is not a decomposable element in ${_2\pi_*^s}$.
\end{mthm}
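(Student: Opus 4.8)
The plan is to prove the contrapositive in a sharp form: if $f\in{_2\pi_n^s}$ with $n>0$ is decomposable and $h(f)\neq 0$, then in fact $\af(f)=2$, which in particular excludes $\af(f)>2$. So I would begin by writing a decomposable $f$ as a finite sum $f=\int_k g_kh_k$ with $g_k,h_k\in{_2\pi_{*>0}^s}$, and use the $\Z/2$-linearity of $h$ to get $h(f)=\int_k h(g_kh_k)$. Theorem \ref{main0}(ii) then does the main structural work: a summand $h(g_kh_k)$ is nonzero only when $g_k$ and $h_k$ are odd multiples of the same Hopf invariant one element, in which case $g_kh_k$ is the corresponding Kervaire class and $h(g_kh_k)$ is one of $h(\eta^2),h(\nu^2),h(\sigma^2)$. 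Since $f$ is homogeneous, this forces $n\in\{2,6,14\}$ and $h(f)=N\cdot h(\theta_n)$, where $\theta_n\in\{\eta^2,\nu^2,\sigma^2\}$ is the Kervaire class in degree $n$ and $N$ counts the summands of ``square type''; thus $h(f)\neq 0$ amounts to $N$ being odd.

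The second ingredient is elementary filtration bookkeeping. Writing $F^k$ for the subgroup of elements of Adams filtration at least $k$, and recalling that Adams filtration is superadditive under products while any positive-degree element has filtration $\geq 1$, every product $g_kh_k$ satisfies $\af(g_kh_k)\geq 2$, so $f\in F^2$ and $\af(f)\geq 2$ automatically. I would next record that the filtration-$1$ elements of ${_2\pi_{*>0}^s}$ are exactly the odd multiples of $\eta,\nu,\sigma$, living only in degrees $1,3,7$ (the classes detected by $h_1,h_2,h_3$, the Hopf invariant one theorem ruling out all higher lines). Consequently a product $g_kh_k$ of filtration exactly $2$ must have both factors of filtration $1$, hence both odd multiples of a Hopf element with degrees drawn from $\{1,3,7\}$. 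For $n\in\{2,6,14\}$ the only way such degrees sum to $n$ is $1+1$, $3+3$, or $7+7$, so in precisely these degrees the filtration-$2$ products are exactly the square-type summands above, each detected in $E_\infty^{2,2+n}$ by $h_1^2,h_2^2$, or $h_3^2$.

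Putting the two together gives the parity argument that closes the proof. Writing $[f]$ for the image of $f$ in $E_\infty^{2,2+n}=F^2/F^3$, every summand of filtration $\geq 3$ dies in this quotient, so $[f]$ is the sum over the filtration-$2$ summands, i.e. $[f]=N\cdot h_i^2$ with $h_i^2\in\{h_1^2,h_2^2,h_3^2\}$ a nonzero permanent cycle (it detects $\theta_n\neq 0$). Hence $[f]\neq 0$ if and only if $N$ is odd, which by the previous paragraph is exactly the condition $h(f)\neq 0$. Therefore $h(f)\neq 0$ forces $[f]\neq 0$, i.e. $f\notin F^3$, i.e. $\af(f)=2$, contradicting $\af(f)>2$; so $f$ cannot be decomposable.

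I expect the main subtlety to lie in arranging that the ``nonzero Hurewicz'' summands and the ``filtration exactly $2$'' summands are literally the same set, so that both $h(f)$ and $[f]$ are governed by the single parity $N$. This requires two checks: that no summand of filtration $\geq 3$ can contribute to $h(f)$ (which is exactly what Theorem \ref{main0}(ii) guarantees, its nonzero outputs always being products of two filtration-$1$ factors), and that in the three relevant degrees $2,6,14$ no filtration-$2$ product other than the squares can arise (the degree count $1+1,3+3,7+7$, together with the fact that an even multiple of a Hopf element already has filtration $\geq 2$ and so cannot serve as a filtration-$1$ factor). Away from $n\in\{2,6,14\}$ there is nothing to prove, since $h(f)=0$ there by the degree argument alone.
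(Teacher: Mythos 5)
Your proposal is correct and takes essentially the same approach as the paper: both arguments rest on Theorem \ref{main0}(ii), which confines the nonzero Hurewicz images of decomposable summands to the Kervaire squares $\eta^2,\nu^2,\sigma^2$, combined with Adams filtration bookkeeping to exclude those summands when $\af(f)>2$. Your parity count in $E_\infty^{2,2+n}=F^2/F^3$ is simply a more careful rendering of the paper's terse claim that a filtration-$\geqslant 3$ sum of decomposables ``cannot involve'' such square terms (taken literally such terms can occur, but only with even multiplicity, which is exactly what your count $N$ records), and it yields the sharper conclusion $\af(f)=2$ whenever $f$ is decomposable with $h(f)\neq 0$.
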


The proof of the above theorem is very short, so we include it here.

\begin{proof}
Suppose $f$ is a sum of decomposable terms. Since $f$ is Adams filtration at least $3$, then if written as a sum of decomposable terms in ${_2\pi_*^s}$, it cannot involve terms such as $\eta^2,\nu^2$ or $\sigma^2$. Hence, by Theorem \ref{main0}, $h(f)=0$.
\end{proof}

The above theorem has a variant on the level of ASS due to Hung and Peterson \cite[Proposition 5.4]{Hung-Peterson}. It states that for the Lannes-Zarati homomorphism $\varphi_k:\mathrm{Ext}^{k,k+i}_A(\Z/2,\Z/2)\to (\Z/2\otimes D_k)_i^*$, which is meant to filter the Hurewicz homomorphism $\pi_iQ_0S^0\to H_i(Q_0S^0;\Z/2)$ on the $E_\infty$-page level of the ASS, $\varphi_k$ does vanish on decomposable classes in the spectral sequence when $k>2$. Unfortunately, the relation between $\oplus_k\varphi_k$ and $h$ is not of that linear type, and it is not clear to the author whether or not if one of these results implies the other. Although, we like to note that the two conditions of being a decomposable permanent cycle in ASS and representing a decomposable element in ${_2\pi_*^s}$ are not necessarily the same; for instance $h_j^2$ in ASS with $3<j<7$ represents a Kervaire invariant one element $\theta_j$ where by Theorem \ref{main0} the latter is not a decomposable in ${_2\pi_*^s}$.\\

Next, together with some detailed calculations on $h(\kappa)$, the following becomes an immediate corollary of Theorem \ref{main0}.

\begin{mthm}\label{ideal}
Suppose that Curtis conjecture holds on $S\subset{_2\pi_{*>0}^s}$. Then the conjecture also holds on $\la S\ra$.
\end{mthm}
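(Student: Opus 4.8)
The plan is to reduce an arbitrary element of $\la S\ra$ to its homogeneous pieces, to split each piece into a ``linear'' and a ``quadratic'' part, and then to control these two parts respectively by the hypothesis on $S$ and by Theorem~\ref{main0}(ii). Since $h$ is additive and preserves the internal grading, it is enough to take a homogeneous $x\in\la S\ra$ of degree $n>0$ with $h(x)\neq 0$ and to prove that $x$ is a Hopf invariant one or a Kervaire invariant one element. Writing $x=\int_i r_is_i$ with $s_i\in S$ and $r_i\in{_2\pi_*^s}$, and discarding the components of total degree $\neq n$, I would set $x=x_{\mathrm{lin}}+x_{\mathrm{quad}}$, where $x_{\mathrm{lin}}$ gathers the summands with $\deg r_i=0$ and $x_{\mathrm{quad}}$ gathers those with $\deg r_i>0$. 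Thus $x_{\mathrm{lin}}$ is a combination of degree-$n$ elements of $S$ with scalar coefficients (an odd coefficient leaving $h$ unchanged and an even one killing the term modulo $2$), while $x_{\mathrm{quad}}$ is a sum of honest products of two positive-degree classes.

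Applying $h$ and using additivity gives $h(x)=h(x_{\mathrm{lin}})+h(x_{\mathrm{quad}})$. For the quadratic part I would invoke Theorem~\ref{main0}(ii) term by term: for each summand $r_is_i$ with both factors in positive degree, $h(r_is_i)$ is either $0$ or one of $h(\eta^2),h(\nu^2),h(\sigma^2)$, and the latter occurs only when $r_i$ and $s_i$ are odd multiples of a common element among $\eta,\nu,\sigma$. Hence $h(x_{\mathrm{quad}})=0$ unless $n\in\{2,6,14\}$, in which case it is $0$ or the unique Kervaire class in degree $n$. For the linear part, the invariance $h(\alpha f)=h(f)$ under odd multiples reduces $h(x_{\mathrm{lin}})$ to a sum modulo $2$ of the classes $h(s_i)$ taken over the generators entering with an odd coefficient; and since the conjecture is assumed on $S$, each such $s_i$ with $h(s_i)\neq 0$ is a Hopf or Kervaire invariant one element.

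It then remains to collapse these contributions degree by degree. In any fixed degree there is at most one Hopf invariant one class (these occur only in degrees $1,3,7$) and at most one Kervaire class, and the two families occur in disjoint degrees; so both $h(x_{\mathrm{lin}})$ and $h(x_{\mathrm{quad}})$ are multiples modulo $2$ of a single Hurewicz image, and therefore $h(x)$ is either $0$ or that single class. If $n$ is neither a Hopf nor a Kervaire degree then no such class exists and we would obtain $h(x)=0$, contrary to the choice of $x$; hence $n$ is admissible and $h(x)$ is the Hurewicz image of the Hopf or Kervaire invariant one element in that degree. In the Hopf degrees and in degrees $2$ and $6$ the group ${_2\pi_n^s}$ is cyclic and $h$ is nonzero precisely on the odd multiples of its generator, so $h(x)\neq 0$ already forces $x$ to be the corresponding Hopf or Kervaire invariant one element, completing these cases.

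The one place where this bookkeeping needs a genuine computation, and where I expect the main difficulty to lie, is degree $n=14$, where ${_2\pi_{14}^s}=\Z/2\{\sigma^2\}\oplus\Z/2\{\kappa\}$ fails to be cyclic. Here the implication ``$h(x)\neq 0\Rightarrow x$ is Kervaire invariant one'' amounts to showing that $h$ is supported exactly on the Kervaire invariant one coset $\{\sigma^2,\sigma^2+\kappa\}$, that is, that $h(\kappa)=0$. I would therefore compute the Hurewicz image of $\kappa$ in $H_{14}(Q_0S^0;\Z/2)$ and verify that it vanishes, equivalently that $\kappa$ does not carry the square class detecting $\sigma^2$; this is exactly the calculation the statement alludes to. The same check is needed in any Kervaire degree whose stable stem has rank greater than one, but degree $14$ is the first and representative instance, and once it is settled the theorem follows from Theorem~\ref{main0} together with additivity as above.
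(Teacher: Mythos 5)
Your reduction is, step for step, the paper's own proof of Theorem \ref{ideal}: decompose a homogeneous element of $\la S\ra$ as a finite sum $\int\alpha_j f_i$, separate the terms with $\deg\alpha_j=0$ (handled by the hypothesis on $S$, since odd scalars do not change $h$) from those with $\deg\alpha_j>0$ (handled by Theorem \ref{main0}(ii), which confines any nonzero quadratic contribution to $\eta^2,\nu^2,\sigma^2$ in degrees $2,6,14$), then use cyclicity of the $1$-, $2$-, $3$-, $6$- and $7$-stems to collapse everything except the $14$-stem, where ${_2\pi_{14}^s}\simeq\Z/2\{\sigma^2\}\oplus\Z/2\{\kappa\}$ forces the question of whether $h$ vanishes on $\kappa$. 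Up to that point your bookkeeping is sound and coincides with the paper's. But at the decisive moment you write that you ``would therefore compute'' $h(\kappa)=0$ and verify it vanishes --- and no computation follows. This is a genuine gap, not a routine verification: in the paper it is the entire content of Subsection 4.1. There, $\kappa$ is represented by Toda's bracket $\{\beta,\alpha,\nu\}$, where $\alpha:\Sigma K\to S^0$ (with $K=S^0\cup_2 e^1$) extends $\eta$ and $\beta:S^9\to K$ is a coextension of $\overline{\nu}$; the bracket is realised unstably as a composite $S^{14}\stackrel{\beta^\flat}{\lra}C_{\Gamma^6\alpha}\stackrel{\nu_\sharp}{\lra}Q_0S^0$ with $\Gamma^6=\Omega^6\Sigma^6$; a preparatory lemma computes $H_*(C_{\Gamma^6\alpha};\Z/2)$ together with its Steenrod action, exhibiting five monomial generators in dimension $14$; the component in $H_*\Gamma^6S^3$ is excluded by Nishida relations (the candidates are not annihilated by $Sq^2_*$), and the remaining component is excluded by an $Sq^1_*$ computation together with the Milnor--Moore fact that a primitive decomposable in a Hopf algebra over $\Z/2$ must be a square, which the surviving terms are not. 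Your phrase ``$\kappa$ does not carry the square class detecting $\sigma^2$'' restates the goal rather than supplying any of this; without Lemma \ref{h(kappa)} the degree-$14$ case, hence the theorem, is unproved.

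A smaller inaccuracy: your closing claim that ``the same check is needed in any Kervaire degree whose stable stem has rank greater than one'' overstates what remains. By Theorem \ref{main0}(ii) the quadratic part can have nonzero Hurewicz image only in degrees $2$, $6$ and $14$; in the higher Kervaire degrees ($30$, $62$, \dots) every contribution is linear and so inherits the dichotomy directly from the hypothesis on $S$, and accordingly the paper performs no analogue of the $h(\kappa)=0$ computation there. Degree $14$ is singular precisely because $\sigma^2$ there is created by the ring multiplication from elements ($\sigma$ itself) that need not belong to $S$, so the hypothesis gives no control over the $\kappa$-coordinate of the sum and an unconditional computation is unavoidable.
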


An \textit{ad hoc} way to verify the conjecture, then is try to choose the set $S$ as large as possible by feeding in more elements into $S$. For instance, writing $J$ for the fibre of the Adams operation $\psi^3-1:\mathrm{BSO}\lra \mathrm{BSO}$, it is known that the conjecture holds on ${_2\pi_*}J$ \cite[Theorem 1]{Za}. Let $F\subset{_2\pi_{*>0}^s}$ be the set of all the elements of ${_2\pi_*^s}$ which are detected in either $1$-line or $2$-line of ASS, and write $\tau_i\in{_2\pi_{2^{i+1}+1}^s}$ for Bruner's family which live in the $3$-line of the ASS. The following provides an example for applying Theorem \ref{ideal}.

\begin{mlmm}\label{main1}
Consider the composition
$$\la f:f\in F+{_2\pi_*}J,\textrm{ or }f=\tau_i\ra\rightarrowtail{_2\pi_*^s}\simeq{_2\pi_*}Q_0S^0\lra H_*(Q_0S^0;\Z/2).$$
Then, only elements of Hopf invariant one or Kervaire invariant one map nontrivially under the above composition.
\end{mlmm}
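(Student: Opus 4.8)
The plan is to read Lemma \ref{main1} as a direct application of Theorem \ref{ideal}. Since for subsets $A,B\subseteq{_2\pi_*^s}$ one has $\la A+B\ra=\la A\cup B\ra$, the generating set $\{f:f\in F+{_2\pi_*}J\}\cup\{\tau_i\}$ may be replaced by the set of \emph{individual} generators $F\cup{_2\pi_*}J\cup\{\tau_i\}$; this is important because it lets me avoid any discussion of Hurewicz images of sums and reduces the whole lemma, via Theorem \ref{ideal}, to checking the Curtis conjecture on each of these three families separately. For the summand ${_2\pi_*}J$ nothing new is needed: this is exactly \cite[Theorem 1]{Za}. So the work is confined to the $1$- and $2$-line family $F$ and to Bruner's family $\tau_i$.

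The $1$-line is immediate: by Adams' Hopf invariant one theorem the only permanent cycles of $\ext^{1,*}_A$ in positive stems are $h_1,h_2,h_3$, detecting $\eta,\nu,\sigma$, which are Hopf invariant one and hence permitted. For the $2$-line I would stratify the classes of $\ext^{2,*}_A$. The decomposable homotopy elements detected there are products of two filtration-one classes, hence lie in $\la\eta,\nu,\sigma\ra$, and Theorem \ref{main0}(ii) already accounts for their Hurewicz image (indeed $h(fg)\neq0$ there only for the Kervaire squares $\eta^2,\nu^2,\sigma^2$). The indecomposable squares $h_j^2$ detect the Kervaire invariant one elements $\theta_j$ ($j\leqslant6$), which are permitted. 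The genuinely remaining case is the indecomposable filtration-two families that are neither products nor Kervaire classes, the prototype being the $\eta$-family detected by $h_1h_j$ (indecomposable for $j\geqslant4$, living in the even stem $2^j$); for these I must prove $h$ vanishes.

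The main tool is the homology-suspension dichotomy that underlies the refined bounds quoted in the abstract. For a spherical $f$ the class $h(f)$ is primitive, and over $\Z/2$ a primitive is annihilated by $e_*$ essentially when it is a square; squares sit in even degrees, and for a filtration-two class being such a square forces $f$ to be Kervaire invariant one, i.e. to lie in the stem $2^{m+1}-2$. Since $2^j$ is not of this form for $j\geqslant2$, either $e_*h(\eta_j)=0$, which would make $\eta_j$ Kervaire and is excluded by its stem, or $e_*h(\eta_j)\neq0$, which by the Hopf-invariant-one constraint on suspension-surviving primitives forces the odd stem $2^m-1$, again impossible; hence $h(\eta_j)=0$. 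The $\tau_i$ case is the cleanest and I would treat it last: $\tau_i$ sits in the odd stem $2^{i+1}+1$, so $h(\tau_i)$ can never be a square, whence $e_*h(\tau_i)\neq0$ as soon as $h(\tau_i)\neq0$; but a suspension-surviving spherical primitive is Hopf invariant one, so it would live in stem $1,3$ or $7$, none of which equals $2^{i+1}+1$ for $i\geqslant1$. Therefore $h(\tau_i)=0$. Note that Theorem \ref{necessary} is consistent with, but not sufficient for, this conclusion, since Bruner's classes are indecomposable.

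The step I expect to be the real obstacle is the systematic disposal of the indecomposable filtration-two families such as the $\eta$-family. Unlike the $\tau_i$, these sit in \emph{even} stems, so the quick ``a square has even degree'' argument does not by itself eliminate the $e_*h=0$ branch, and one must invoke the sharper assertion that a filtration-two spherical class killed by the homology suspension is forced to be a Kervaire class of stem $2^{m+1}-2$. Pinning this down rigorously --- identifying the Hurewicz image of such a class with an honest square in $H_*(Q_0S^0;\Z/2)$ and matching the degrees --- is where the careful homology-suspension bookkeeping and the precise form of the refined bounds are needed; once that is in place the remaining generators are routine and Theorem \ref{ideal} finishes the argument.
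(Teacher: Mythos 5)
Your reduction via Theorem \ref{ideal}, the disposal of ${_2\pi_*}J$ by \cite{Za}, and the $1$-line via Hopf invariant one are all fine and agree with the paper. The gap is that both branches of your homology-suspension dichotomy, which is where the actual content of the lemma lives, rest on statements that are not theorems. For the branch $e_*h(f)\neq 0$ you invoke ``a suspension-surviving spherical primitive is Hopf invariant one, hence lives in stem $1,3$ or $7$.'' What the paper proves (its final section) is only the lower bound $n\geqslant 2^k-1$ for a filtration-$k$ class with $e_*h(f)\neq 0$: for $k=2$ this says $n\geqslant 3$ and for $k=3$ it says $n\geqslant 7$, and the elements at issue --- $\eta_j$ in stem $2^j$, $\nu^*$ in stem $18$, $\tau_i$ in stem $2^{i+1}+1$ --- all comfortably satisfy these inequalities, so no contradiction arises. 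The statement you actually need, that $e_*h(f)\neq 0$ forces $f$ to be (an odd multiple of) $\eta,\nu,\sigma$, is essentially the spherical-class conjecture for $QS^1$ (the Eccles conjecture for $X=S^1$), i.e.\ open; assuming it here begs the question. Symmetrically, in the branch $e_*h(f)=0$, Lemma \ref{kernelofsuspension} plus Milnor--Moore does give that $h(f)$ is a square (hence in even stem), but your sharper assertion that ``a filtration-two spherical class killed by $e_*$ is forced to be a Kervaire class of stem $2^{m+1}-2$'' is again unproven: Madsen's theorem \cite{Madsenthesis} is the converse direction (Kervaire elements hit squares), and the paper's part (ii) only yields $n\geqslant 2^k-2^t$ under the extra hypothesis $h(f)\in\im(\varphi_k)$, an inequality these stems satisfy rather than violate. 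You yourself flag this branch as ``the real obstacle''; that diagnosis is correct, and the quoted refined bounds cannot fill it.

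For contrast, the paper never uses such a dichotomy and instead kills each family by direct computation: $h(\eta_i)=0$ because Mahowald's construction \cite{Ma} factors $\eta_i$ through a complex $X_i$ whose top cell lies in dimension strictly below $2^i$, so the adjoint $S^{2^i}\to X_i\to Q_0S^0$ is trivial in $H_{2^i}$ for dimensional reasons; $h(\nu^*)=0$ by representing $\nu^*$ as the Toda bracket $\{\sigma,2\sigma,\nu\}$ and proving (Lemma \ref{sigmabar}, via Nishida relations, primitivity, and an unstable factorisation through $\Omega^5S^8$ that rules out $Q^{15}g_3$) that the extension $\overline{\sigma}:C_\sigma\to QS^3$ of $2\sigma$ is homologically trivial; and $h(\tau_i)=0$ from Bruner's formula $\tau_i=\cup_1(\eta_i)-\eta\eta_{i+1}$ \cite{Bruner}, the homotopy-operation proposition of Section \ref{operationsection} (which gives $h(\cup_1(\eta_i))=0$ once $h(\eta_i)=0$), and Theorem \ref{main0} (which gives $h(\eta\eta_{i+1})=0$ since the factors sit in different gradings). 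Note finally that your enumeration of the $2$-line omits $\nu^*\in{_2\pi_{18}^s}$ altogether; your schema would nominally cover it, but only through the same two unproven claims, so as written the proposal does not establish the lemma for any of $\eta_j$, $\nu^*$, or $\tau_i$.
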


Let's note that recently Gaudens \cite[Theorem 6.5]{Ga} has used a BV-algebra structure on the homology of double loop spaces to verify Theorem \ref{ideal} in the case $S$ is the image of $J$-homomorphism ${_2\pi_*}\mathrm{SO}\lra{_2\pi_*}Q_0S^0$. Since, at the prime $2$, $\im((J_\R)_\#)\subset{_2\pi_*}J$, the above Lemma provides a generalisation of Gaudens' results, where we have made no use of string structures. \\

Next, note that for a given set $S$ in a ring, the ideal $\la S\ra$ is the largest that we can get from the ring multiplication. Hence, we have to look for other methods for extending $S$ or $\la S\ra$ into a larger set. Working in ${_2\pi_*^s}$, we consider Toda brackets and homotopy operations. First we consider Toda brackets. An ideal result would be to show that ``if the conjecture hold on $S$, then it also holds on the extension of $S$, or of $\la S\ra$, by all (higher) Toda brackets'' of course modulo our definition of a higher bracekt! By a result of J. Cohen \cite[Thoerem 4.5]{JCohen}, any element of ${_2\pi_*^s}$ might be written as a (higher) Toda bracket, as he defined, of $2,\eta,\nu,\sigma$. Hence, such a statement would imply the Conjecture \ref{conjecture} as by Cohen's result it is enough to choose $S=\{2,\eta,\nu,\sigma\}$. We don't have such a statement. In this direction, $(1)$ we have obtained some necessary conditions for nonvanishing of the Hurewicz image of elements represented by Toda bracket; $(2)$ we have observed that under expected stability conditions the Hurewicz image does indeed vanish.

\begin{mprp}\label{tripleToda}
(i) Let $\alpha\in{_2\pi_i^s}X_1$, $\beta:X_1\to X_2$ and $\gamma:X_2\to S^0$ be stable maps with $X_1$ and $X_2$ being stable complexes of finite type. Suppose that the Toda bracket $\{\alpha,\beta,\gamma\}$ is defined and represents $f\in{_2\pi_*^s}$ such that $h(f)\neq 0$. Then, at least one of $\gamma:X_2\to QS^0$ or $\alpha:S^i\to QX_1$ is nontrivial in homology.\\
(ii) Suppose $\alpha\in\pi_i^sX_1$. Let $\beta:X_1\to X_2$, $\gamma:X_2\to S^0$ be stable maps, with $X_1,X_2$ being (stable) complexes, $i<2\mathrm{conn}(X_1)$, $\dim X_1<2\mathrm{conn}(X_2)$, $i>\dim X_1>\dim X_2$, so that the Toda bracket $\{\alpha,\beta,\gamma\}$ is defined. Here, $\mathrm{conn}(X)=n$ if $\pi_t^sX\simeq 0$ for all $t<n$. Then, the element of $\pi_{i+1}Q_0S^0$ represented by Toda bracket $\{\alpha,\beta,\gamma\}$ maps trivially under the Hurewicz homomorphism $h:\pi_{i+1}Q_0S^0\to H_{i+1}(Q_0S^0;\Z)$. In particular, this is true on the $2$-component of $\pi_*^s$.
\end{mprp}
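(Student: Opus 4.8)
The plan is to run both parts through the standard \emph{extension} model of the triple Toda bracket, combined with the structure theorem that $H_*(QY;\Z/2)$ is the free allowable Dyer--Lashof algebra on $\widetilde{H}_*(Y;\Z/2)$, so that an infinite loop map acts on homology as the free extension of its effect on the generators coming from $Y$. For (i), I would first realise the bracket geometrically. Since $\beta\alpha\simeq0$, the map $\beta$ extends over the mapping cone to a stable map $\bar\beta:C_\alpha\to X_2$, where $C_\alpha=X_1\cup_\alpha e^{i+1}$ and $p:C_\alpha\to C_\alpha/X_1\cong S^{i+1}$ collapses onto the top cell. Since $\gamma\beta\simeq0$, the composite $\gamma\bar\beta:C_\alpha\to S^0$ restricts trivially to $X_1$, so for a suitable choice of the two null-homotopies it factors as $\gamma\bar\beta\simeq f\circ p$, where $f:S^{i+1}\to S^0$ is the representative of $\{\alpha,\beta,\gamma\}$ with $h(f)\neq0$. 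Passing to adjoints and using functoriality of $\Omega^\infty$, the salient relation becomes a homotopy-commutative triangle of spaces, $\widehat{f}\circ p\simeq Q\gamma\circ\widehat{\bar\beta}:C_\alpha\to QS^0$, in which $\widehat{\bar\beta}:C_\alpha\to QX_2$ is the adjoint of $\bar\beta$ and $Q\gamma:QX_2\to QS^0$ is the infinite loop map induced by $\gamma$.

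The next step is to evaluate on the top class. Choosing $u\in\widetilde{H}_{i+1}(C_\alpha;\Z/2)$ with $p_*(u)$ the fundamental class of $S^{i+1}$, I would read off
$$h(f)=\widehat{f}_*\,p_*(u)=(Q\gamma)_*\,\widehat{\bar\beta}_*(u).$$
The crucial asymmetry is that $\widehat{\bar\beta}$ is only a map of spaces, so $\widehat{\bar\beta}_*(u)$ may be a genuinely unstable (decomposable or Dyer--Lashof) class in $\widetilde{H}_{i+1}(QX_2;\Z/2)$; but $Q\gamma$ is an infinite loop map, hence $(Q\gamma)_*$ is the map of free allowable algebras determined by $\gamma_*$ on the generators $\widetilde{H}_*(X_2;\Z/2)$. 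Expanding $\widehat{\bar\beta}_*(u)$ as a polynomial in operations $Q^I x_j$ on generators $x_j\in\widetilde{H}_*(X_2)$ and applying $(Q\gamma)_*$ replaces each $x_j$ by $\gamma_*(x_j)$; therefore $h(f)\neq0$ forces $\gamma_*(x_j)\neq0$ for some generator occurring in $\widehat{\bar\beta}_*(u)$, i.e. $\gamma:X_2\to QS^0$ is nontrivial in homology. This already yields the stated conclusion. Running the identical analysis on the Spanier--Whitehead dual bracket $\{D\gamma,D\beta,D\alpha\}$, which represents the same element $f$ but interchanges the roles of $\alpha$ and $\gamma$ (and homology with cohomology), shows that $\alpha:S^i\to QX_1$ must likewise be nontrivial in homology, so in fact both outer maps are homologically nontrivial.

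For (ii), the point is that the connectivity and dimension hypotheses force $\alpha:S^i\to QX_1$ to be homologically trivial, after which the dual ($\alpha$-side) implication of (i) gives $h(f)=0$; here $X_1,X_2$ are finite-dimensional of finite type, hence finite, so Spanier--Whitehead duality is available. Concretely, since $i<2\,\mathrm{conn}(X_1)$, Freudenthal makes the unit $X_1\to QX_1$ a homology isomorphism through degree $i$, so $\widetilde{H}_i(QX_1)\cong\widetilde{H}_i(X_1)$, which vanishes because $i>\dim X_1$; thus $\alpha_*:\widetilde{H}_i(S^i)\to\widetilde{H}_i(QX_1)$ is zero. The remaining hypotheses $\dim X_1<2\,\mathrm{conn}(X_2)$ and $i>\dim X_1>\dim X_2$ are exactly what guarantee the bracket is defined and keep the dual construction inside the metastable range where this homological comparison is valid. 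The whole argument is integral (the relevant obstruction lives in $\widetilde{H}_i(X_1;\Z)=0$), and restricting to $2$-primary parts gives the final assertion.

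I expect the main obstacle to be twofold. The delicate structural point is justifying the identity $h(f)=(Q\gamma)_*\,\widehat{\bar\beta}_*(u)$ with full rigour: one must produce the adjoint triangle coherently (obtaining $Q\gamma\circ\widehat{\bar\beta}$ as the adjoint of $\gamma\bar\beta$ from functoriality of $\Omega^\infty$), control the Toda-bracket indeterminacy so that the representative $f$ with $h(f)\neq0$ is precisely the one realised by the chosen null-homotopies, and keep track of the basepoint components of $QS^0$. The second, genuinely substantive, point is the passage from ``some generator $x_j$ with $\gamma_*(x_j)\neq0$'' to a clean homological statement about $\gamma$, and on the dual side the identification of $D\alpha$ being homologically nontrivial with $\alpha$ being so; both rest on the free allowable Dyer--Lashof algebra description of $H_*(QX_2;\Z/2)$ and on infinite loop maps being maps of such algebras, and the dualisation in particular requires the finiteness (not merely finite type) of $X_1,X_2$, which is why the symmetric statement is stated only as ``at least one'' in (i) while the sharper $\alpha$-side input is legitimately usable in (ii).
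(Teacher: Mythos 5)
Your part (i) is correct and is essentially the paper's own argument for Proposition \ref{Toda2}: realise the bracket through the extension $\overline{\beta}:C_\alpha\to QX_2$ and coextension, use $\alpha_*=0$ to lift the top class of $S^{i+1}$ to $u\in H_{i+1}(C_\alpha;\Z/2)$, and push $u$ through the infinite loop map $Q\gamma$, whose homology is determined on the Dyer--Lashof generators by the adjoint $\gamma:X_2\to QS^0$. Up to that point your proof and the paper's coincide.

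The genuine gap is your Spanier--Whitehead duality step, and it is fatal for your part (ii). Duality does not interchange the two homological conditions in (i), because they are not dual to one another: the $\gamma$-condition concerns the homology of the adjoint $X_2\to QS^0$ (equivalently, the effect of the infinite loop extension on generators), whereas the $\alpha$-condition concerns the \emph{Hurewicz image} of $\alpha$ in the infinite loop space $QX_1$ --- unstable data, which can be nonzero when $\alpha$ is trivial in stable homology (e.g.\ $\eta$) and zero when the bracket is still detected. Running (i) on $\{D\gamma,D\beta,D\alpha\}$ only yields: if the Hurewicz image of $D\gamma$ in $Q(\Sigma^N DX_2)$ vanishes, then $D\alpha$, i.e.\ $\alpha$ in \emph{stable} (co)homology, is nontrivial; it does not yield your claimed ``$h(f)\neq 0\Rightarrow \alpha:S^i\to QX_1$ nontrivial in homology.'' That stronger claim is simply false: $\{2,\eta,2\}=\eta^2$ has $h(\eta^2)\neq 0$, yet $\alpha=2\iota:S^1\to QS^1$ has vanishing mod $2$ Hurewicz image. (The paper's Note following Proposition \ref{Toda2} records exactly this asymmetry: when $\alpha_*\neq 0$, $\gamma$ may be trivial or nontrivial, so only the disjunction can hold.)

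Consequently your (ii) is unproved as written: you deduce $h(f)=0$ from the homological triviality of $\alpha$ via this false ``$\alpha$-side implication,'' and nothing in the hypotheses of (ii) makes $\gamma$ homologically trivial, so (i) proper gives you nothing. The paper's proof of Theorem \ref{Todavanish}(ii) is instead purely dimensional: the hypotheses $i<2\,\mathrm{conn}(X_1)$ and $\dim X_1<2\,\mathrm{conn}(X_2)$ let one realise $\alpha$ and $\beta$ as genuine maps of finite complexes, so the coextension $\alpha^\flat:S^{i+1}\to C_\beta$ lands in a complex of dimension $\dim X_1+1\leqslant i<i+1$ (using $i>\dim X_1>\dim X_2$), forcing $\alpha^\flat_*=0$ integrally and independently of all choices, whence $h(f)=0$. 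A duality argument along your lines can in fact be salvaged --- the Hurewicz image of $D\gamma$ dies in $H_N(Q\Sigma^N DX_2)$ for dimension reasons when $N$ is large --- but that is a different argument from the one you wrote; note also that your mod $2$ Dyer--Lashof machinery would at best give the $2$-primary case, whereas (ii) asserts the integral statement, which the dimensional argument delivers directly.
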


We have a few comments in order. First, the idea here is that higher Toda brackets of maps among (wedge of) spheres, can be written as a triple Toda bracket among complexes so that the complexes somehow encode some of the maps in the higher bracket. Second, note that for the case $(i)$ of the above theorem, when we choose $X_1$ and $X_2$ to be suspensions of sphere spectrum, then the condition $\alpha:S^i\to X_1$ is stronger than mapping nontrivially under $h:{_2\pi_*}Q_0S^0\to H_*(Q_0S^0;\Z/2)$. More precisely, let $\alpha\in\pi_i^s$, $\beta\in\pi_j^s$ and $\gamma\in\pi_k^s$ such that the stable composition $\beta\alpha$ and $\gamma\beta$ are trivial. The we may consider
$$S^{i+j+k}\stackrel{\alpha}{\to} QS^{j+k}\stackrel{\beta}{\to}QS^k\stackrel{\gamma}{\to}QS^0$$
to form a Toda bracket representing an element in $\pi_{i+j+k+1}QS^0\simeq\pi_{i+j+k+1}^s$. Now, by the above proposition, if $\{\alpha,\beta,\gamma\}$ is defined and maps nontrivially into $H_*(QS^0;\Z/2)$ then we need $\alpha:S^{i+j+k}\to QS^{j+k}$ and $\gamma:S^k\to QS^0$ to have nontrivial homology, which for $\alpha:S^{i+j+k}\to QS^{j+k}$ is a stronger requirement than just mapping nontrivially under the Hurewicz map $\pi_iQS^0\to H_i(QS^0;\Z/2)$. To see this, note that the Kerviare invariant one elements $\theta_j\in{_2\pi_{2^{j+1}-2}^s}$ map to square terms $p_{2^{i}-1}^2\in H_*(Q_0S^0;\Z/2)$ where $p_{2n+1}$'s are certain primitive elements in $H_*(Q_0S^0;\Z/2)$ \cite[Theorem 7.3]{Madsenthesis}. This implies that $e_*h(\theta_j)=0$ in $H_*(QS^1;\Z/2)$ and consequently, for any for any nonzero value of $j,k$, $\alpha=\theta_j$ does not fulfill this condition. Hence, for any choice of $\beta$ and $\gamma:S^k\to QS^0$ with $\gamma_*=0$, the Toda bracket $\{\theta_j,\beta,\gamma\}$, if defined, maps trivially into $H_*(Q_0S^0;\Z/2)$.

\begin{mcrl}\label{Todaextension1}
$(i)$ Suppose $S\subset{_2\pi_{*>0}^s}$ such that for any $\alpha:S^i\to Q_0S^0$ belonging to $S$ either $\alpha_*=0$ or $e_*h(\alpha)=0$. Then, for any $\alpha\in S$, $\beta\in{_2\pi_j^s}$, and $\gamma:S^k\to Q_0S^0$ with $\gamma_*=0$, the Toda bracket $\{\alpha,\beta,\gamma\}$ maps trivially into $H_*(Q_0S^0;\Z/2)$.\\
$(ii)$ Suppose $S\subset{_2\pi_{*>0}^s}$ such that the Curtis conjecture holds on it. Then, the conjecture hold on the extensions of $\la S\ra$ obtained by triple Toda brackets $\{\alpha,\beta,\gamma\}$, with $\alpha\in S$, $\beta\in{_2\pi_*^s}$ is arbitrary, such that either one of the following conditions hold:\\
$(1)$ $\gamma_*=0$;\\
$(2)$ $\gamma_*\neq 0$ and $\alpha:S^{i+j+k}\to QS^{j+k}$ is nontrivial in homology.
\end{mcrl}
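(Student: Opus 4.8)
The plan is to derive both parts from Proposition~\ref{tripleToda}(i), the single technical ingredient being a translation between the two ways the Hurewicz data of $\alpha$ enter: through $h(\alpha)\in H_i(Q_0S^0)$, as in the hypotheses recorded on $S$, and through the suspended adjoint $\alpha:S^{i+j+k}\to QS^{j+k}$, as in the proposition. Writing $\alpha_n:S^{i+n}\to QS^n$ for the adjoint of the $n$-fold suspension of $\alpha=\alpha_0:S^i\to Q_0S^0$, the factorisation $\alpha_{n+1}=e\circ\Sigma\alpha_n$ through the evaluation, together with naturality of the Hurewicz homomorphism and of the suspension isomorphism $\Sigma_*$, gives $h(\alpha_{n+1})=e_*h(\alpha_n)$ and hence $h(\alpha_{j+k})=e_*^{j+k}h(\alpha)$, where $e_*$ is the homology suspension. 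Thus the condition ``$\alpha:S^{i+j+k}\to QS^{j+k}$ is nontrivial in homology'' appearing in Proposition~\ref{tripleToda}(i) is \emph{identical} to $e_*^{j+k}h(\alpha)\neq 0$. I would record this as a preliminary observation, valid whenever $j+k\geqslant 1$; this holds throughout, since $\gamma:S^k\to Q_0S^0$ forces $k\geqslant 1$.

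For $(i)$ I argue by contraposition. Fix $\alpha\in S$, $\beta\in{_2\pi_j^s}$ and $\gamma:S^k\to Q_0S^0$ with $\gamma_*=0$, set $f=\{\alpha,\beta,\gamma\}$, and realise the bracket through the complexes $X_1=S^{j+k}$ and $X_2=S^k$. If $h(f)\neq 0$, then Proposition~\ref{tripleToda}(i) forces $\gamma:X_2\to QS^0$ or $\alpha:S^{i+j+k}\to QX_1$ to be nontrivial in homology. The first is excluded by $\gamma_*=0$. For the second, the hypothesis on $S$ gives either $\alpha_*=0$, i.e. $h(\alpha)=0$, or $e_*h(\alpha)=0$; since $j+k\geqslant 1$, both yield $e_*^{j+k}h(\alpha)=0$, so by the identification above the second alternative fails as well. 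Both alternatives failing, $h(f)=0$, which is exactly the contrapositive of the assertion.

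For $(ii)$ I first apply Theorem~\ref{ideal} to settle $\la S\ra$ itself, and then treat a bracket $f=\{\alpha,\beta,\gamma\}$ with $\alpha\in S$ under hypothesis $(1)$ or $(2)$, assuming $h(f)\neq 0$. In both cases $e_*^{j+k}h(\alpha)\neq 0$: under $(2)$ this is the standing assumption that $\alpha:S^{i+j+k}\to QS^{j+k}$ is nontrivial in homology, read through the identification above, while under $(1)$ it is forced by Proposition~\ref{tripleToda}(i), since $\gamma_*=0$ rules out the other alternative. In particular $h(\alpha)\neq 0$, so, as Curtis holds on $S$, the element $\alpha$ is of Hopf or Kervaire invariant one; but a Kervaire class has $e_*h(\alpha)=0$ (its Hurewicz image is a square, exactly as recorded for $\theta_j$ in the discussion preceding this corollary) and so cannot survive $j+k\geqslant 1$ suspensions. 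Hence $\alpha\in\{\eta,\nu,\sigma\}$ up to odd multiples. Under $(1)$ one may phrase this more sharply: if $\alpha$ is not Hopf invariant one, then the hypothesis of part $(i)$ holds for $\alpha$ and $(i)$ already yields $h(f)=0$, so the conjecture holds vacuously on such $f$. Either way, all of $(ii)$ reduces to brackets whose leading entry is $\eta$, $\nu$ or $\sigma$.

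The main obstacle is precisely this residual case: for $\alpha\in\{\eta,\nu,\sigma\}$ one must show that $h(f)\neq 0$ forces $f$ itself to be of Hopf or Kervaire invariant one, for otherwise the corollary would assert a currently unknown instance of the conjecture. Here I would exploit the rigidity already extracted --- $e_*^{j+k}h(\alpha)\neq 0$ pins $\alpha$ to one of three classes --- together with the degree $i+j+k+1$ of $f$ and the (finite, known) list of spherical Hopf and Kervaire classes, to constrain $\beta$ and $\gamma$ sharply, and then identify the surviving low-filtration brackets led by $\eta,\nu,\sigma$ directly, matching them against those two families and ruling out any spurious new spherical class. This last step is genuinely case-based rather than formal, and is where the difficulty concentrates; everything preceding it is a mechanical consequence of Proposition~\ref{tripleToda}(i) and the homology-suspension identification.
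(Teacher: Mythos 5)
Your part (i) is correct and is essentially the paper's own argument: the identification $h(\alpha_{j+k})=e_*^{j+k}h(\alpha)$ for the adjoints of suspensions, combined with the contrapositive of Proposition~\ref{tripleToda}(i), is exactly how the paper's first lemma in the proof of this corollary proceeds. The paper additionally disposes of the degenerate possibility $j=k=0$ by noting that ${\pi_0^s}\simeq\Z$ has no zero divisors, a case you compress into the assertion that $\gamma:S^k\to Q_0S^0$ forces $k\geqslant 1$; that reading is defensible from the statement's phrasing, but the explicit argument is safer, since in part (ii) nothing confines $\gamma$ to the zero component. Your reduction in (ii) --- Theorem~\ref{ideal} for $\la S\ra$ itself, Madsen's $e_*h(\theta_j)=0$ to exclude a Kervaire class in the leading slot, and part (i) to absorb the remaining non-Hopf cases under hypothesis (1) --- also reproduces the paper's reduction to $\alpha\in\{\eta,\nu,\sigma\}$ up to odd multiples.

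The genuine gap is in (ii): you stop exactly where the paper's proof does its real work, and your final paragraph is a plan, not an argument. Two things are missing. First, the quantitative consequence of your own identification is never stated: since $e_*h(\eta)=g_1^2$, $e_*^3h(\nu)=g_3^2$, $e_*^7h(\sigma)=g_7^2$, and squares are decomposable hence killed by one further homology suspension, nontriviality of $\alpha:S^{i+j+k}\to QS^{j+k}$ in homology forces $j+k\leqslant 1,3,7$ respectively; this bound is what makes your ``residual case'' a finite list at all. Second, the disposal of that finite list is not formal, and the paper devotes its tables plus Lemma~\ref{todanusigma} to it: most entries are eliminated because the bracket is undefined ($\eta^2,\eta^3,\eta\sigma,\eta^2\sigma\neq 0$), or because the bracket lands in the 3-, 7-, 10-, 11- or 15-stems where every candidate image vanishes (even multiples of $\nu$ or $\sigma$ factor through degree-$2^r$ maps, trivial in mod $2$ homology of $QS^n$ for $n>0$; $h(\eta\mu_9)=h(\eta\kappa)=0$ by Theorem~\ref{main0}; $h(\nu_1)=h(\sigma_1)=0$ by \cite[Theorem 1]{Za}). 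The surviving brackets $\{\nu,2\nu,4\}$, $\{\nu,\eta^2,2\}$, $\{\nu,8,\eta^2\}$, $\{\sigma,\nu^2,2\}$, $\{\sigma,2\nu,\nu\}$, $\{\sigma,\nu,2\nu\}$, $\{\sigma,16,\nu^2\}$ require genuinely new input that your sketch does not supply: (a) a dimension count in the adjointed bracket diagram showing $e_*h(\{\nu,2\nu,4\})=0$, whence by Lemma~\ref{kernelofsuspension} and Milnor--Moore the Hurewicz image would be a primitive decomposable, i.e.\ a square, sitting in odd degree, hence zero; and (b) an unstable $EHP$ argument: if $h(\{\sigma,2\nu,\nu\})\neq 0$ the bracket would have to equal $\sigma^2$, but the bracket is constructed on $\Omega^4S^4$, so it pulls back to ${_2\pi_{18}}S^4$, while $H(\sigma^2)=\sigma\neq 0$ shows $\sigma^2$ is born on $S^8$. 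Note also that your proposed ``matching against the two families'' tacitly presupposes the Hurewicz computations in these stems that the paper assembles separately (e.g.\ $h(\kappa)=0$, itself a substantial calculation, to control the 14-stem). So what you have is a correct strategic outline whose decisive, case-based content --- precisely the content of the paper's Lemma~\ref{todanusigma} and the three tables --- is absent.
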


An examples of a set $S$ for the case $(i)$ of the above corollary is the ideal considered in Lemma \ref{main1}. The above corollary leaves the cases that $\alpha:S^{i+j+k}\to QS^{j+k}$ is trivial in homology, but $\gamma:S^k\to QS^0$ is not. The proof involves many detailed calculations, some of which we believe to be new.\\
Next, we consider the possible extensions by available homotopy operations coming from $\pi_*^sD_rS^n$ for $n>0$ where $D_r=E\Sigma_r\ltimes_{\Sigma_r}(-)^{\wedge r}$, with $\Sigma_r$ being the permutation group on $r$ elements, is the $r$-adic construction. We have the following which is more or less expected.
\begin{mprp}
Suppose $\alpha\in{_2\pi_m^s}D_rS^n$ and $f\in{_2\pi_n}Q_0S^0$ such that $h(f)=0$. Then, for $\alpha^*(f)$ defined by the composition
$$S^m\stackrel{\alpha}{\lra}D_rS^n\stackrel{D_rf}{\lra}D_rS^0\stackrel{\mu}{\lra}S^0$$
with $\mu:D_rS^0={B\Sigma_r}_+\to S^0$ being induced by the $H_\infty$ ring structure of $S^0$, we have $h(\alpha^*(f))=0$ where $h:{_2\pi_*}Q_0S^0\lra H_*(Q_0S^0;\Z/2)$ is the Hurewicz homomorphism.
\end{mprp}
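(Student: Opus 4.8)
The plan is to compute $h(\alpha^*(f))$ by naturality of the Hurewicz homomorphism and then to push all the dependence on $f$ onto its adjoint $\bar f\colon S^n\to\Omega^\infty S^0=Q_0S^0$, where the hypothesis $h(f)=0$ bites directly. Write $P=E\Sigma_r\ltimes_{\Sigma_r}(S^n)^{\wedge r}$ for the \emph{space}-level extended power, so that $D_rS^n=\Sigma^\infty P$ and $\Omega^\infty D_rS^n=QP$; since $n>0$ the space $P$ is $(rn-1)$-connected and any nonzero $\alpha$ has $m\geqslant rn>0$. Let $\bar\alpha\colon S^m\to QP$ and $\bar f\colon S^n\to Q_0S^0$ be the adjoints of $\alpha$ and $f$, so $h(f)=\bar f_*[S^n]$ and the Hurewicz image of $\alpha$ in $H_m(QP;\Z/2)$ is $\bar\alpha_*[S^m]$. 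Since $\overline{\alpha^*(f)}=\Omega^\infty(\mu\circ D_rf)\circ\bar\alpha$ and $\Omega^\infty(\mu\circ D_rf)\colon QP\to Q_0S^0$ is an infinite loop map, naturality of $h$ gives
$$h(\alpha^*(f))=\Omega^\infty(\mu\circ D_rf)_*\big(\bar\alpha_*[S^m]\big).$$

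Next I would use the algebraic structure of the source. By the Dyer--Lashof--May description, $H_*(QP;\Z/2)$ is the free allowable $R$-algebra on $\widetilde H_*(P;\Z/2)$, whose $R$-algebra generators are the image of the unit $P\to QP$; and $\Omega^\infty(\mu\circ D_rf)_*$, being induced by an infinite loop map, is a map of allowable $R$-algebras, hence determined by its values on these generators. Consequently it suffices to show that the composite
$$\Phi\colon P\xrightarrow{\;\mathrm{unit}\;}QP\xrightarrow{\;\Omega^\infty(\mu\circ D_rf)\;}Q_0S^0$$
is trivial on reduced mod $2$ homology: then every $R$-algebra generator is killed, the map vanishes in all positive degrees, and in particular it annihilates $\bar\alpha_*[S^m]$ (which lies in positive degree $m$).

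The heart of the argument — and the step I expect to be the main obstacle — is to identify $\Phi$ with the space-level extended power of $\bar f$. Using the natural comparison $\lambda\colon D_r(\Omega^\infty E)\to\Omega^\infty D_rE$ between space- and spectrum-level extended powers, together with the fact that the $H_\infty$-multiplication $\mu$ on $S^0$ induces through $\lambda$ the $E_\infty$-structure map $\xi=\Omega^\infty(\mu)\circ\lambda_{S^0}\colon D_r(Q_0S^0)\to Q_0S^0$, and the compatibility of $\lambda$ with units, one should obtain a homotopy
$$\Phi\;\simeq\;\xi\circ D_r(\bar f)\colon\ P\xrightarrow{\;D_r\bar f\;}D_r(Q_0S^0)\xrightarrow{\;\xi\;}Q_0S^0.$$
Establishing this factorization is exactly the statement that the spectrum-level power operation $\mu\circ D_r(-)$ agrees, after passing to infinite loop spaces and restricting to the bottom inclusion, with the space-level extended power of the adjoint followed by the Dyer--Lashof structure map; the careful point is the bookkeeping between the \emph{multiplicative} $H_\infty$-structure used to define $\alpha^*$ and the \emph{additive} structure seen by $h$.

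Granting the factorization, the hypothesis finishes the proof. The assumption $h(f)=\bar f_*[S^n]=0$ says precisely that $\bar f$ is trivial on reduced mod $2$ homology. With $\Z/2$-coefficients, $\widetilde H_*(D_r(-);\Z/2)$ is a functor of reduced homology — concretely $\widetilde H_*(D_rX;\Z/2)\cong H_*(\Sigma_r;\widetilde H_*(X;\Z/2)^{\otimes r})$, and the induced map $(D_rg)_*$ is given by the evident (non-additive) natural formulas in $g_*$, so it is built entirely out of operations applied to $g_*$ and therefore vanishes whenever $g_*=0$. Hence $(D_r\bar f)_*=0$, so $\Phi_*=\xi_*\circ(D_r\bar f)_*=0$ on reduced homology, and by the previous paragraph $h(\alpha^*(f))=0$. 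As a sanity check on why the hypothesis cannot be dropped, when $h(f)\neq0$ the very same mechanism produces Dyer--Lashof operations on $h(f)$ — for $f=\eta$ and $\alpha$ the bottom cell it reproduces $\alpha^*(\eta)=\eta^2$ with $h(\eta^2)=Q^1h(\eta)\neq0$ — so the vanishing is genuinely driven by $h(f)=0$ rather than by the automatic triviality of $f$ on stable homology.
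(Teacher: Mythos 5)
Your proposal is correct, but it takes a genuinely different route from the paper. The paper's proof is spectrum-level throughout: it first proves a lemma, via Kuhn's factorisation theorem \cite[Theorem 2.7]{Kuhn-suspension}, that for a stable map $g$ between suspension spectra of \emph{path connected} spaces, $D_rg$ factors through $\Sigma^\infty\Omega^\infty g$, so $(\Omega^\infty g)_*=0$ forces $(\Omega^\infty D_rg)_*=0$; since $S^0$ is not path connected this lemma cannot be applied to $f$ directly, and the paper therefore routes $f$ through $\R P^\infty$ by the Kahn--Priddy theorem, choosing $\overline{f}:S^n\to P$ adjoint to $t\circ f^s$ so that $h(f)=0$ (and injectivity of $t_*$) yields $(\Omega^\infty\overline{f})_*=0$, and then applies the lemma to $\overline{f}$ and reassembles via $\lambda$. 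You instead stay at the space level: you restrict the infinite loop map to the bottom $P=D_rS^n\to QP$, invoke freeness of $H_*(QP;\Z/2)$ as an allowable $R$-algebra to reduce to the restriction $\Phi$, identify $\Phi\simeq\xi\circ D_r\bar f$ via the comparison $\lambda:D_r\Omega^\infty\to\Omega^\infty D_r$, and kill $(D_r\bar f)_*$ by the field-coefficient functoriality of $\widetilde H_*(D_r(-);\Z/2)$ in reduced homology. This buys you two things: you avoid the Kahn--Priddy detour entirely (the adjoint $\bar f$ lands in the path connected \emph{space} $Q_0S^0$, so no connectivity problem ever arises), and the homological input you need --- that a map inducing zero on reduced mod $2$ homology has extended powers inducing zero --- is the elementary chain-level fact behind the Steenrod construction, rather than Kuhn's theorem. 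What the paper's route buys is that its key lemma is stated and proved outright from a citable result, whereas your step 3 (the homotopy $\Phi\simeq\xi\circ D_r\bar f$) is asserted with a sketch; it is indeed standard --- it amounts to naturality of $\lambda$ in the spectrum variable together with the unit compatibility $\lambda_{\Sigma^\infty X}\circ D_r\eta_X\simeq\eta_{D_rX}$, both available in \cite{LMS} and in Bruner--May--McClure--Steinberger --- but to make your argument self-contained you should cite or verify those two diagrams explicitly, since they are exactly where the multiplicative $H_\infty$ data used to define $\alpha^*$ meets the additive structure seen by $h$. Your closing remark that the hypothesis $h(f)=0$ is sharp (the bottom-cell operation on $\eta$ giving $h(\eta^2)\neq 0$) is a nice check the paper does not make.
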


\begin{mrmk}
Note that, ideally, we would like to show that if the conjecture holds on $S\subseteq{_2\pi_{*>0}}$, then conjecture holds on the extension of $S$ by (iterated) application of homotopy operations available throughout ${_2\pi_*^s}D_rS^n$. By the above theorem if $h(\alpha^*(f))\neq 0$ then $h(f)\neq 0$. If $f\in S$ on which the conjecture holds, then $f$ is either a Hopf invariant one or a Kervaire invariant one element. The proof will be complete if we show that for any $\alpha$, $h(\alpha^*(\theta_j))=0$, and $h(\alpha^*(h_i))=0$ unless $\alpha^*(h_i)$ is a Hopf or Kerviare invariant one element. Here, we have used $h_i$ for one of the Hopf invariant one elements, and $\theta_j$ for Kervaire invariant one elements. We do not have a proof of this latter claim yet.
\end{mrmk}

Next, we consider the relation between dimension of a spherical class and the Adams filtration of homotopy classes that map to it (compare to \cite[Corollary 1.5]{Ku}). We have the following.

\begin{mthm}
Suppose $f\in{_2\pi_n}Q_0S^0$ of Adams filtration $k$, i.e. $k$ is the least positive integer where $f$ is represented by a cycle in $\ext^{k,k+n}_A(\Z/2,\Z/2)$, and  $h(f)\neq 0$ where $h:{_2\pi_n}Q_0S^0\to H_n(Q_0S^0;\Z/2)$ is the Hurewicz homomorphism. The following statement then hold.\\
(i) If $e_*h(f)\neq 0$ then $n\geqslant 2^k-1$.\\
(ii) If $e_*h(f)=0$, $h(f)=\xi^{2^t}$ with $e_*\xi\neq 0$, and $h(f)\in\im(\varphi_k)$ then $n\geqslant 2^{k}-2^t$. Here, $\varphi_k$ denotes the Lannes-Zarati homomorphism.
\end{mthm}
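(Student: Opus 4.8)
The plan is to reduce the statement to an elementary degree estimate on Dyer--Lashof monomials, after exploiting the standard structure of $H_*(Q_0S^0;\Z/2)$. Recall that this homology is a polynomial algebra on the admissible monomials $Q^I[1]*[-2^{\ell(I)}]$ with $I=(i_1,\dots,i_{\ell(I)})$ admissible of positive excess, and that a spherical class is necessarily primitive. Over $\Z/2$ the primitives of a polynomial Hopf algebra are spanned by the $2^j$-th powers of the algebra generators, and the homology suspension $e_*:\widetilde H_*(Q_0S^0)\to\widetilde H_{*+1}(QS^1)$ annihilates every decomposable; in particular $e_*(\xi^{2^t})=0$ for $t\geqslant 1$, while on an honest generator $\xi=Q^I[1]$ it is given by $\xi\mapsto Q^I\iota_1$. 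I would use this to split the two cases: $e_*h(f)\neq 0$ forces $h(f)$ to carry a nonzero \emph{indecomposable} summand (case (i)), whereas $e_*h(f)=0$ together with $e_*\xi\neq 0$ tells us that $\xi$ is a genuine generator and that $h(f)=\xi^{2^t}$ is its $2^t$-th Frobenius power (case (ii)). In either case the problem becomes: bound below the degree of an indecomposable generator of prescribed Dyer--Lashof length.

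The combinatorial core is as follows. If $\xi=Q^I[1]$ with $I=(i_1,\dots,i_\ell)$ is a nonzero indecomposable, then the indecomposability forces at each stage the operation to strictly exceed the degree of its argument, i.e. $i_j> i_{j+1}+\cdots+i_\ell$ for all $j$ (equality would produce a square, hence a decomposable; this is equivalent to the usual admissibility plus positive-excess condition). Writing $S_j=i_{\ell-j+1}+\cdots+i_\ell$ for the sum of the last $j$ entries, this gives $S_1\geqslant 1$ and $S_{j+1}\geqslant 2S_j+1$, whence $S_j\geqslant 2^j-1$ by induction and $\deg\xi=S_\ell\geqslant 2^\ell-1$, with equality realised by $I=(2^{\ell-1},\dots,2,1)$. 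For case (i) I would then invoke the identification of the Adams filtration of $f$ with the Dyer--Lashof length of the indecomposable part of $h(f)$ to conclude that this length $\ell$ satisfies $\ell\geqslant k$, so $n=\deg h(f)\geqslant 2^\ell-1\geqslant 2^k-1$. For case (ii), the hypothesis $h(f)\in\im(\varphi_k)$ pins the length of the monomial $h(f)$ to $k$; since the Cartan-formula identity $\xi^{2}=Q^{\deg\xi}\xi$ shows that raising to the $2^t$-th power prepends exactly the $t$ operations $Q^{\deg\xi},Q^{2\deg\xi},\dots,Q^{2^{t-1}\deg\xi}$, the length of $\xi$ is $k-t$. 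Hence $\deg\xi\geqslant 2^{k-t}-1$ and
$$n=\deg(\xi^{2^t})=2^t\deg\xi\geqslant 2^t(2^{k-t}-1)=2^k-2^t,$$
as claimed.

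The part I expect to be the genuine obstacle is the homotopy-theoretic input identifying Adams filtration with Dyer--Lashof length, i.e. making precise that a class of Adams filtration $k$ has Hurewicz image supported in length $\geqslant k$, and that $\varphi_k$ is the associated graded of $h$ for the pair (Adams filtration, length). This is the content of the Lannes--Zarati picture, and it is exactly here that the hypotheses are consumed: $e_*h(f)\neq 0$ in (i) guarantees a surviving indecomposable summand whose length is then bounded below by $k$, while $h(f)\in\im(\varphi_k)$ and $e_*\xi\neq 0$ in (ii) guarantee respectively that the total length is $k$ and that $\xi$ is not itself a proper power, so that the combinatorial estimate applies to $\xi$ with its true length $k-t$. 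Everything else is the elementary induction above; I would verify the admissibility and excess of the prepended operations as a routine check, and treat the degenerate case $k=t$ (where the bound reads $n\geqslant 0$) separately.
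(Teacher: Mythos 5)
Your proposal is correct, and its skeleton matches the paper's: both arguments consume the same homotopy-theoretic input (Kuhn's theorem, cited in the paper as \cite[Corollary 1.7]{Ku}, giving that a class of Adams filtration $k$ has Hurewicz image of Dyer--Lashof length at least $k$ --- note this is an inequality, not the ``identification'' you invoke, but the inequality is the only direction you actually use), and both handle case (ii) by extracting the $2^t$-th root and reducing to a length-$(k-t)$ estimate for $\xi$ via the Frobenius bookkeeping $\xi^2=Q^{\dim\xi}\xi$, exactly as in the paper. Where you genuinely diverge is the mechanism for the central degree bound. The paper argues geometrically: it applies the stable James--Hopf invariant $j_{2^{l(f)}}:QS^1\to QD_{2^{l(f)}}S^1$, which acts as projection onto the top-length part of $e_*h(f)$, and then uses that $D_{2^{l(f)}}S^1$ has its bottom cell in dimension $2^{l(f)}$, so $H_*QD_{2^{l(f)}}S^1$ vanishes below that dimension, forcing $n+1\geqslant 2^{l(f)}\geqslant 2^k$. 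You replace this with a purely combinatorial induction: admissibility plus positive excess force $i_j>i_{j+1}+\cdots+i_\ell$ for all $j$, whence $S_{j+1}\geqslant 2S_j+1$ and $\deg\geqslant 2^\ell-1$, sharp at $I=(2^{\ell-1},\ldots,2,1)$. The two are equivalent in content --- the bottom-cell dimension of $D_{2^\ell}S^1$ is precisely your combinatorial minimum --- but yours is more elementary and self-contained, while the paper's version makes the estimate conceptual (connectivity of extended powers) and reuses machinery (Snaith splitting, James--Hopf homology) already developed earlier in the paper. Two small corrections to your write-up: first, your parenthetical claim that the chain condition $i_j>i_{j+1}+\cdots+i_\ell$ is \emph{equivalent} to admissibility-plus-positive-excess fails in one direction (e.g.\ $(100,2,1)$ satisfies the chain condition but is inadmissible), and ``equality produces a square, hence a decomposable'' is only a valid justification at the top stage; the correct derivation of the chain condition is the downward induction $2i_{j+1}\geqslant i_j>i_{j+1}+\cdots+i_\ell\Rightarrow i_{j+1}>i_{j+2}+\cdots+i_\ell$, which uses admissibility. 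Second, in case (i) the theorem does \emph{not} hypothesize $h(f)\in\im(\varphi_k)$, so to obtain the length bound you must, as the paper does, pass to the adjoint $f_1:S^{n+1}\to QS^1$, where the source sphere is a suspension and Kuhn's result applies to $e_*h(f)=h(f_1)$; your ``Lannes--Zarati picture'' black box needs to be opened at least that far for the argument to be complete.
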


\section{Preliminaries}

\subsection{Iterated loop spaces}\label{iteratedloop}
We wish to recall some standard facts on iterated loop spaces, and refer the reader to \cite{May-G} and \cite{Ad} for more details. We refer to a space $X$ as a $n$-fold loop space, or $\Omega^n$-space for short, with $n\leqslant +\infty$, if there exists a collection of spaces $X_i$, $i=0,1,2,\ldots,n$, together with homotopy equivalences $X_i\to\Omega X_{i+1}$ such that $X=X_0$. Obviously, an $\Omega^n$-space is also an $\Omega^l$-space for $l<n$. An $\Omega^n$-space $X$ is an $E_n$-algebra in the operadic language of \cite{May-G} and admits a `structure map' or `evaluation map' $\theta_n(X):\Omega^n\Sigma^nX\to X$, briefly denoted by $\theta_n$ if there is no confusion; the structure map itself is an $n$-fold loop map. The spaces of the form $\Omega^n\Sigma^nX$ are ought to play the role of free objects in the category of $E_n$-algebras so that any map $f:Y\to X$ with $X$ being an $\Omega^n$-space, admits a unique extension to a $n$-fold loop map $\Omega^n\Sigma^nY\to X$ defined by the composite
$$\Omega^n\Sigma^nY\stackrel{\Omega^n\Sigma^nf}{\lra}\Omega^n\Sigma^nX\stackrel{\theta_n}{\lra}X.$$
In the case of $\Omega^\infty$-spaces, our main focus will be spaces $QX=\colim\Omega^n\Sigma^nX$ which by definition satisfy $\Omega Q\Sigma X=QX$.\\
\textbf{Adjoints.} The functors $\Sigma^n$ and $\Omega^n$, $n>0$, as well as $\Sigma^\infty$ and $\Omega^\infty$ are adjoint functors. For $f:\Sigma X\to\Sigma Y$, a map of spaces, we have its adjoint as a map $X\to \Omega\Sigma Y$. It follows from the definition that the adjoint mapping can also be written as the composite
$$X\stackrel{E_X}{\to}\Omega\Sigma X\stackrel{\Omega f}{\to}\Omega\Sigma Y$$
where $E_X:X\to\Omega\Sigma X$ is the suspension map, being adjoint to the identity $\Sigma X\to\Sigma X$. It is then obvious that the extension of both maps to $\Omega$-maps $\Omega\Sigma X\to \Omega\Sigma Y$ are the same and agree with $\Omega f:\Omega\Sigma X\to\Omega\Sigma Y$. Similar statement holds for maps $f:\Sigma^nX\to\Sigma^nY$ with $n<+\infty$. Moreover, if $X\to Y$ is a map of suspensions spectra, then its stable adjoint $X\to QY$ extends to an infinite loop map $QX\to QY$ by the above construction. On the other hand, we may apply $\Omega^\infty$ to the above map to obtain an infinite loop map $QX\to QY$. The two constructions, indeed yield the same map.

\subsection{Homology of iterated loop spaces}
We recall a description of $\Z/2$-homology of $\Omega^{n+1}$-loop spaces of the form $\Omega^{n+1}\Sigma^{n+1}X$. Let $Y$ be an $\Omega^{n+1}$-space, $0\leqslant n\leqslant+\infty$ with the convention $+\infty+1=+\infty$. Since $Y$ is a loop space, the homology $H_*(Y;\Z/2)$ then is a ring under Pontrjagin product. Moreover, there are group homomorphisms \cite[Part I, Theorem I]{CLM} and \cite[Part III, Theorem 1.1]{CLM} (see also \cite{Kuhnhomology})
$$Q_i:H_d(Y;\Z/2)\lra H_{i+2d}(Y;\Z/2)$$
for $i<n+1$ with $Q_0$ acting as the squaring operation with respect to the Pontrjagin product, i.e. $Q_0\xi=\xi^2$. These operations fit into an algebra known as the Dyer-Lashof algebra, often denoted by $R$. For a sequence $E=(e_1,\ldots,e_s)$ we may abbreviate $Q_{e_1}\cdots Q_{e_s}$ to $Q_E$. For a path-connected space $X$, the homology ring $H_*(\Omega^{n+1}\Sigma^{n+1}X;\Z/2)$ is a ring, and as a module of $R$ in the case of $n=+\infty$, is described by (see \cite[Part III, Lemma 3.8]{CLM}, \cite[Part I, Lemma 4.10]{CLM})
$$H_*(\Omega^{n+1}\Sigma^{n+1}X;\Z/2)\simeq\Z/2[Q_Ex_\mu:E\textrm{ nondecreasing, }e_1>0, e_s<n+1]$$
where $\{x_\mu\}$ is an additive basis for the reduced homology $\widetilde{H}_*(X;\Z/2)$, i.e. symbols as $Q_Ex_\mu$ are generators of this polynomial algebra. We allow the empty sequence $\phi$ to be nondecreasing with $Q_\phi\xi=\xi$. Sometimes, it is more convenient to work with upper indexed operations, $Q^i:H_d(\Omega^{n+1}Y;\Z/2)\to H_{d+i}(\Omega^{n+1}Y;\Z/2)$, known as the Kudo-Araki operations, defined by $Q^i\xi=Q_{i-d}\xi$. For $I=(i_1,\ldots,i_s)$, we say $I$ is admissible if $i_j\leqslant 2i_{j+1}$, and the excess is defined by $\ex(Q^Ix_\mu)=i_1-(i_2+\cdots+i_s+\dim x_\mu)$. We allow $\phi$ to be admissible with $Q^\phi\xi=\xi$ and $\ex(Q^\phi x_\mu)=+\infty$. Note that if $I$ is admissible for $Q^Ix_\mu=Q_Ex_\mu$ then $E$ is nondecreasing and vice versa; also $\ex(Q^Ix_\mu)=e_1$. In the case of the infinite loop space $QX$, for a path connected space $X$, using the upper indexed operations we have
$$H_*(QX;\Z/2)\simeq\Z/2[Q^Ix_\mu:I\textrm{ is admissble }, \ex(Q^Ix_\mu)>0].$$
The operations $Q^i$ are additive homomorphisms with $Q^ia=a^2$ if $i=\dim a$ and $Q^ia=0$ if $i<\dim a$; hence $\ex(Q^Ix_\mu)=0$ means that $Q^Ix$ is a square in the polynomial ring $H_*(QX;\Z/2)$.\\
Write $X_+$ for $X$ with a disjoint base point. If $X$ is path connected, we may describe homology of $Q_0(X_+)$, the base point component of $Q(X_+)$, as follows. Write $[n]$ for the image of $n\in\pi_0Q(X_+)\simeq\pi_0^s(X_+)\simeq\Z$ in $H_0(Q(X_+);\Z)$ under the Hurewicz homomorphism. Then, we have
$$H_*(Q_0(X_+);\Z/2)\simeq\Z/2[Q^Ix_\mu*[-2^{l(I)}]:I\textrm{ is admissble }, \ex(Q^Ix_\mu)>0]$$
where $*$ is the Pontrjagin product in $H_*(Q(X_+);\Z/2)$ induced by the loop sum. Note that $Q^Ix_\mu*[-2^{l(I)}]$ is not a decomposable in $Q_0(X_+)$ whereas it is in $Q(X_+)$.\\
If $f:X\to Y$ is given, we then obtain a map of $\Omega^{n+1}$-spaces, $n\leqslant+\infty$, as $\Omega^{n+1}\Sigma^{n+1}f:\Omega^{n+1}\Sigma^{n+1} X\to \Omega^{n+1}\Sigma^{n+1}Y$. The homology of this map on generators $Q^Ix$ is determined by
$$(\Omega^{n+1}\Sigma^{n+1}f)_*Q^Ix=Q^If_*x.$$
Similarly, for the homology of the induced map $Q(X_+)\to Q(Y_+)$ we have
$$(Q(f_+))_*(Q^Ix*[-2^l(I)])=Q^If_*x*[-2^{l(I)}].$$
The action of the Steenrod algebra on the generators $H_*(\Omega^{n+1}\Sigma^{n+1}X;\Z/2)$, on the generators, is determined by (iterated application of) Nishida relations
\begin{equation}\label{Nishida}
Sq^a_*Q^b=\int_{t\geqslant 0}{b-a\choose a-2t}Q^{b-a+r}Sq^t_*
\end{equation}
where $Sq^i_*:H_*(-;\Z/2)\to H_{*-i}(-;\Z/2)$ is the operation dual to $Sq^i:H^*(-;\Z/2)\to H^{*+i}(-;\Z/2)$. Moreover, for $\xi,\zeta\in H_*(\Omega^{n+1}\Sigma^{n+1}X;\Z/2)$, there is a Catran formula \cite[Remark 1.9]{Wellington} that
$$Sq^t_*(\xi\zeta)=\int_{i=0}^t (Sq^{t-i}_*\xi)(Sq^i_*\zeta).$$
These relations completely determine the action of Steenrod algebra on homology of $\Omega^{n+1}\Sigma^{n+1}X$ and $Q_0(X_+)$.

\subsection{Homology suspension}
Let $e:\Sigma\Omega X\to X$ be the adjoint of $1_{\Omega X}:\Omega X\to \Omega X$. The homology suspension $\sigma_*$ is defined as
$$H_*\Omega X\stackrel{\simeq}{\lra} H_{*+1}\Sigma\Omega X\stackrel{e_*}{\lra}H_{*+1}X.$$
Since $OX=\Omega Q\Sigma X$ we then may consider the homology suspension $e_*:H_nQX\lra H_{n+1}Q\Sigma X$; in this case $e_*$ is characterised by the following properties: (1) $\sigma_*Q^Ix=Q^I\Sigma_*x$ for $x\in \widetilde{H}_*X$; (2) $\sigma_*\xi=0$ if $\xi$ is a decomposable class in the polynomial algebra $H_*QX$. Moreover, by \cite[Page 47]{CLM} the homology suspension $H_*(Q_0(X_+);\Z/2)\lra H_{*+1}(Q\Sigma(X_+);\Z/2)$ is characterised by the following properties: (1) $\sigma_*$ acts trivially on decomposable terms; (2) on the generators it is given by
$$e_*(Q^Ix_\mu*[-2^{l(I)}])=Q^I\Sigma_*x_\mu.$$
Similar statements hold for the homology suspension $e_*:H_*\Omega^{n+1}\Sigma^{n+1}X\to H_{*+1}\Omega^n\Sigma^{n+1}X$. \\
For a Hopf algebra $H$, we write $DH$ for the submodule of decomposable elements and $\mathrm{Ind}(H)=H/DH$ for the quotient module of indecomposable elements. The following provides more information on the action of homology suspension.

\begin{lmm}\label{kernelofsuspension}
Let $X$ be a path connected space.\\
(i) The homology suspension $e_*:H_nQX\lra H_{n+1}Q\Sigma X$ does induce a monomorphism $\sigma_*:\mathrm{Ind}(H_nQX)\lra H_{n+1}Q\Sigma X$. In particular, if $\int Q^Ix_\mu\in\ker e_*$ where sum runs over some $I,x_\mu$ then $Q^Ix_\mu\in\ker e_*$ for all $I,x_\mu$ in the sum, i.e. $\ex(Q^Ix_\mu)=0$.\\
(ii) The homology suspension $e_*:H_nQ_0(X_+)\lra H_{n+1}Q\Sigma(X_+)$ does induce a monomorphism $\sigma_*:\mathrm{Ind}(H_nQ_0(X_+))\lra H_{n+1}Q\Sigma(X_+)$. In particular, if $\int Q^Ix_\mu*[-2^{l(I)}]\in\ker e_*$ where sum runs over some $I,x_\mu$ then $Q^Ix_\mu*[-2^{l(I)}]\in\ker e_*$ for all $I,x_\mu$ in the sum, i.e. $\ex(Q^Ix_\mu)=0$.
\end{lmm}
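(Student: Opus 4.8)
The plan is to exploit the explicit additive bases recalled above together with the two characterising properties of the homology suspension. Since $e_*$ annihilates every decomposable class in the polynomial algebra $H_*QX$, it factors through the quotient $\mathrm{Ind}(H_nQX)$, and on the polynomial generators $Q^Ix_\mu$ (which form a basis of the indecomposables) the induced map $\sigma_*$ is given by $\sigma_*(Q^Ix_\mu)=Q^I\Sigma_*x_\mu$. Thus proving $(i)$ reduces to showing that, as $(I,\mu)$ ranges over the admissible sequences and basis elements with $\ex(Q^Ix_\mu)>0$, the images $Q^I\Sigma_*x_\mu$ are linearly independent in $H_{n+1}Q\Sigma X$. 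First I would record the excess bookkeeping: since $\dim\Sigma_*x_\mu=\dim x_\mu+1$, the definition of excess gives $\ex(Q^I\Sigma_*x_\mu)=\ex(Q^Ix_\mu)-1$, so each generator is sent to a class of excess $\geqslant 0$; in particular no generator is killed, since a strictly negative excess would be needed to force $Q^I\Sigma_*x_\mu=0$.

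The main step is then the combinatorial claim that the admissible monomials $Q^Jy_\nu$ of nonnegative excess, with $\{y_\nu\}=\{\Sigma_*x_\mu\}$ a basis of $\widetilde{H}_*\Sigma X$, are pairwise distinct, hence linearly independent, in the polynomial algebra $H_*Q\Sigma X$. The mechanism is the squaring rule: whenever $\ex(Q^Jy_\nu)=0$ the top operation squares, so $Q^Jy_\nu=(Q^{J'}y_\nu)^2$ with $J'$ obtained by deleting the first entry, and iterating (terminating by the convention $\ex(Q^\phi y_\nu)=+\infty$) writes $Q^Jy_\nu=(Q^{\widetilde J}y_\nu)^{2^k}$ for a genuine polynomial generator $Q^{\widetilde J}y_\nu$. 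By unique factorisation in the polynomial ring two such expressions agree only if the generators and the exponents $2^k$ coincide; and since each deleted index is forced by the excess-zero condition to equal the dimension of the remaining class, the original sequence $J$ is recovered from the data $(\widetilde J,k,\nu)$. Hence distinct $(J,\nu)$ give distinct monomials, which is the desired independence; consequently $\sigma_*$ is injective on $\mathrm{Ind}(H_nQX)$.

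For the ``in particular'' clause I would apply $e_*$ to a sum $\int Q^Ix_\mu$ of admissible monomials: the summands with $\ex(Q^Ix_\mu)=0$ are squares, hence decomposable and killed by $e_*$, while the summands with $\ex(Q^Ix_\mu)>0$ map to the linearly independent classes $Q^I\Sigma_*x_\mu$. The image can therefore vanish only if no summand of positive excess occurs, forcing every term to have $\ex(Q^Ix_\mu)=0$ and so to lie in $\ker e_*$ individually. Part $(ii)$ is proved by exactly the same argument, now run on the basis $\{Q^Ix_\mu*[-2^{l(I)}]\}$ of $H_*Q_0(X_+)$ and with the suspension formula $e_*(Q^Ix_\mu*[-2^{l(I)}])=Q^I\Sigma_*x_\mu$; the only point requiring care is that decomposability is read off in $Q_0(X_+)$ rather than in $Q(X_+)$, but the combinatorial core — independence of the images $Q^I\Sigma_*x_\mu$ in $H_*Q\Sigma(X_+)$ — is identical. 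I expect the genuine obstacle to be precisely this combinatorial independence of admissible monomials of nonnegative excess, i.e.\ checking that the excess-zero reduction recovers $J$ uniquely; once that is in hand the remainder is formal.
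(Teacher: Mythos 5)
Your proposal is correct and takes essentially the same route as the paper's proof: both kill decomposables, apply $e_*$ to a representing sum of generators via $e_*(Q^Ix_\mu)=Q^I\Sigma_*x_\mu$, and conclude from the linear independence of the resulting distinct monomials of excess $\geqslant 0$ in the polynomial algebra $H_*Q\Sigma X$. The only difference is that you explicitly justify the point the paper merely asserts --- that the suspended monomials remain distinct and nonzero even when their excess drops to zero, via the squaring reduction $Q^J y_\nu=(Q^{\widetilde{J}}y_\nu)^{2^k}$ and the unique recovery of $J$ from $(\widetilde{J},k,\nu)$ --- which is a welcome filling-in of detail rather than a different argument.
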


\begin{proof}
(i) Let $\xi\in\mathrm{Ind}(H_nQX)$, a nonzero element, be represented by a nonempty sum $\int Q^Ix_\mu$ of monomial generators of $H_*QX$ with $\dim(Q^Ix_\mu)=n$ and $\ex(Q^Ix_\mu)>0$, modulo decomposable terms. Hence, as decomposables are killed by $e_*$, we have
$$e_*\xi=e_*\int Q^Ix_\mu=\int Q^IE_*x_\mu$$
where every term on the right satisfies $\ex(Q^I\Sigma_*x_\mu)\geqslant 0$, being distinct monomials in the polynomial algebra $H_*Q\Sigma X$ since $Q^Ix_\mu$'s were distinct in $H_*QX$. Hence, $e_*\xi\neq 0$ in $H_*Q\Sigma X$ which verifies that $\sigma_*:\mathrm{Ind}(H_nQX)\lra H_{n+1}Q\Sigma X$ is a monomorphisms. In particular, the equation $\sigma_*\xi=\int Q^I\Sigma_*x_\mu$ shows that a sum $\int Q^Ix_\mu\in\ker\sigma_*$ only if all $Q^Ix_\mu\in\ker\sigma_*$. This completes the proof. The proof of (ii) is similar.
\end{proof}
Finally, we note that the homology of a map $f:X\to \Omega Y$ and its adjoint $\widetilde{f}:\Sigma X\to Y$ are related through $e_*$ by the coomutative diagram
$$\xymatrix{H_*X\ar[r]^-{\widetilde{f}_*}\ar[d]_-{\Sigma_*(\simeq)} & H_*\Omega Y\ar[d]^-{e_*}\\
            H_{*+1}\Sigma X\ar[r]^-{f_*}                  & H_{*+1}Y.}$$

\subsection{Spherical classes}
Let $k=\Z,\Z/p$ with $p$ some prime number. A homology class $\xi\in H_n(X;k)$ is called spherical if it is in the image of the Hurewicz homomorphism $\pi_nX\to H_n(X;k)$. A spherical class $\xi$ has some nice properties which follow from its definition and the basic properties of $H_*(S^n;k)$: (1) $\xi$ is a primitive in the coalgebra $H_*(X;k)$ where the coproduct is induced by the diagonal $X\to X\times X$; (2) $\xi$ pulls back to a spherical class $\xi_{-1}\in H_{*-1}(\Omega X;k)$ which is not necessarily unique; (3) when $k=\Z/p$ then $\xi$ is annihilated by all Steenrod operations $P^i_*$; we are mainly interested in the case $p=2$ in which the property $(3)$ reads as $Sq^i_*\xi=0$ for all $i>0$.

\subsection{Stable splitting of $QX$}
According to May \cite{May-G} for a space $X$, $QX$ maybe approximated by a free monoid $CX$. The monoid $CX$ also can be obtained throughout simplicial methods as shown by Barratt and Eccles \cite{BE1},\cite{BE}, \cite{BE3}; in their notation $\Gamma^+$ stands for $C$ and more precisely for a simplicial space $X$ there is a weak equivalence between $C|X|$ and $|\Gamma^+ X|$ where $|\ |$ is the geometric realisation functor. We henceforth use $C$ and $\Gamma^+$  interchangeably when we refer to the literature. The approximation then is described as following. There exists a space $CX$, which is a free monoid object \cite[Proposition]{BE1}, together with an inclusion $i:CX\to QX$ which is a weak equivalence when $X$ is path connected, and a group completion in general \cite[Proposition 5.1, Corollary 5.4]{BE}. A construction of group completion which is popular is the $\Omega B$ functor, with $B$ being the classifying space functor, applied to associative and not necessarily commutative, monoids \cite[Proposition 2.3.4]{BarrattPriddy} (see also \cite[Theorem 3.2.1]{Ad}). Hence, the approximation theorem says that when $X$ is not path connected, there is a homotopy equivalence $\Omega B(CX)\to QX$; in particular, noting that $D_rS^0={B\Sigma_r}_+$, this yields the famous Barratt-Priddy-Quillen theorem that $\Omega B(B\Sigma_\infty)\simeq QS^0$ in the case of $X=S^0$. Moreover, by construction of the universal group \cite{BarrattPriddy}, $\Omega B$, it follows that for a monoid $M$ the natural inclusion $M\to\Omega BM$ is a monomorphism in homology; in particular $i:CX\to QX$ induces a monomorphism in homology.\\
There exist a canonical inclusion $\iota_X:X\to CX$, and a structure map (or evaluation map) $\theta_{CX}:CCX\to CX$ which turn $(C,\iota,\theta)$ into a monad \cite[Proposition 3.6]{BE1}. The space $CX$ admits a filtration $\{F_rCX\}$ such that the successive quotients $F_rCX/F_{r-1}CX$ are the same as the $r$-adic construction on $X$, $D_rX=E\Sigma_r\ltimes_{\Sigma_r}X^{\wedge r}$, with $D_1X=X$, as introduced in Section 1. For an arbitrary compactly generated space $X$, which all of the spaces in this paper are of that type, we have the well-known Snaith splitting \cite[Theorem 1.1]{Snaith}
$$\Sigma^\infty CX\stackrel{\simeq}{\lra}\bigvee_{r=1}^{+\infty}\Sigma^\infty D_rX$$
(see also \cite[Theorem C]{BE3}). By projection onto the $r$-th stable summand and taking stable adjoint, one obtains a map $j_r:CX\to QD_rX$ known as the $r$-th stable James-Hopf invariant which is ought to be an extension of the projection $F_rCX\to D_rX$ \cite[Proposition 1.1]{BE3}. By no means the map $j_r$ is a map of monoids which can be seen by homology computations as in \cite{Kuhnhomology}. If $X$ is path connected, then through the homotopy equivalence $CX\to QX$ we have maps $j_r:QX\to QD_rX$ which we continue to call the $r$-th stable James-Hopf invariant. We just note that by aid of Milnor type splitting for spaces $QCX$ \cite[Theorem A]{BE3}, that $QCX$ is an $E_\infty$ ring space in the sense of \cite{MQRT}, and that the total James-Hopf map $CX\to QCX$ enjoys an `exponential property' \cite{CCMT}, the stable James-Hopf maps admit extensions $QX\to QD_rX$ when $X$ is not necessarily path connected. Since, we have not used the maps in this case, we then omit technical details on them.
We refer the reader to \cite{Kuhngeometry} for a detailed study on geometry of the $j_r$ maps. The maps $j_r$ show interesting properties of which we recall the delooped Kahn-Priddy Theorem \cite[Corollary 2.14]{Kuhnhomology}, \cite[Corollary 6.8]{Finkelstein}(see also \cite{Kuhn-extended} for a generalisation to all primes); it asserts that there is a choice for a map $\lambda:QP\to Q_0S^0$ which induces an epimorphism in ${_2\pi_*}$-homotopy, deloops once, and its right inverse is given by $t=\Omega j_2$ with $j_2:QS^1\to QD_2S^1=Q\Sigma P$, i.e. $t\lambda$ is a ${_2\pi_*}$-equivalence. Consequently, $t$ is a monomorphism in $\Z/2$-homology.\\

Finally, we recall that the $D_r$ functor has been generalised to the category of spectra; in particular for a space $X$, there is a homotopy equivalence $D_r(\Sigma^\infty X)\to \Sigma^\infty D_rX$ \cite[Theorem A]{May-Equicons}(see also \cite{LMS}). This allows to apply $D_r$ to maps among suspension spectra. In particular, given stable maps $f:S^n\to S^0$ we obtain a stable map $D_r(f):D_rS^n\to D_rS^0$ whose stable adjoint $D_rS^n\to QD_rS^0$ extends to an infinite loop map $QD_rS^n\to QD_rS^0=Q({B\Sigma_r}_+)$. As discussed in Subsection \ref{iteratedloop} this agrees with the map $QD_rS^n\to QD_rS^0$ obtained by applying the $\Omega^\infty$ functor to the stable map $D_r(f):D_rS^n\to D_rS^0$.

\subsection{Homology of the Snaith splitting}
If $X$ is path connected, throughout the homotopy equivalence $CX\to QX$, the space $QX$ then stably splits as $\bigvee_r D_rX$ and consequently the homology decomposes as well. There is a filtration on homology of $QX$, called the height filtration (sometimes weight filtrations) $ht:H_*QX\to\mathbb{N}$ such that $ht(x_\mu)=1$ for all $x_\mu\in \widetilde{H}_*X$, $ht(\eta\xi)=ht(\xi)+ht(\eta)$, and $ht(Q^i\xi)=2ht(\xi)$; in particular $h(Q^Ix_\mu)=2^{l(I)}$. In homology we have
$$H_*D_rX=\Z\{\xi\in H_*QX:ht(\xi)=r\}$$
the $\Z$-module generated by all elements of $H_*QX$ of height $r$. As $j_r$ extends the projection $F_rCX\to D_rX$ \cite[Proposition 1.1]{BE3}, and from the stable splitting, it follows when $X$ is path connected, $(j_r)_*:H_*QX\to H_*QD_rX$ acts like projection on elements $Q^Ix_\mu$ if $l(I)=r$ and that $(j_r)_*Q^Ix_\mu=0$ if $l(I)<r$ (see also \cite{Kuhnhomology}).

\subsection{Stable adjoint of Toda brackets}\label{adjointtoda}
The material in here is ought to be well known as the stable adjoint of Toda brackets has been considered previously, e.g. \cite{MahowaldThompson}. But, we include a brief discussion for the references in this paper. The following provides natural way to represent stable adjoint of a Toda bracket of stable maps among stable complexes; we believe it generalises to higher brackets in a natural way.\\

For a moment, given a stable map $\alpha:X\to Y$, we write $\widetilde{\alpha}:X\to QY$ for its stable adjoint. Consider the following diagram
$$X\stackrel{\alpha}{\lra}Y\stackrel{\beta}{\lra}Z\stackrel{\gamma}{\lra}W$$
of stable maps and stable complexes where successive compositions are null. This allows to obtain a commutative diagram
$$\xymatrix{
X \ar[r]^-\alpha & Y\ar[r]^-\beta\ar[d]_-{\iota_\alpha}              & Z\ar[r]^-\gamma\ar[d]_-{\iota_\beta}     & W\\
                        & C_\alpha\ar[ru]^-{\overline{\beta}}\ar[d]_-p     & C_\beta\ar[ru]_-{\gamma_\sharp} \\
                        & \Sigma X\ar[ru]_-{\alpha^\flat}
                }$$
where the stable composition $\gamma_\sharp\circ\alpha^\flat$ represents the triple Toda bracket $\{\alpha,\beta,\gamma\}$ up to the indeterminacy in choosing the extension and co-extension maps in the above diagram. The construction of forming a Toda bracket, is made of taking mapping cones and finding extensions which are stable under suspension or desuspension. Therefore, we may suspend or desuspend the above diagram to obtain diagrams which represent the suspension of desuspension of the Toda bracket above, i.e. represent $\Sigma\{\alpha,\beta,\gamma\}$ or $\Sigma^{-1}\{\alpha,\beta,\gamma\}$.\\
Next, for an input data as above, we like to consider the stable adjoint of the Toda bracket $\Sigma X\to QW$ or rather its extension $Q\Sigma X\to QW$. First, note that all complexes in the above diagram, are stable complexes (suspension spectra) which allows us to take stable adjoint of everything in sight which yields
$$\xymatrix{
X \ar[r]^-{\widetilde{\alpha}}& QY\ar[r]^-{\Omega^\infty\beta}\ar[d]_-{Q\iota_\alpha}         & QZ\ar[r]^-{\Omega^\infty\gamma}\ar[d]_-{Q\iota_\beta}     & QW\\
                              & QC_\alpha\ar[ru]^-{\Omega^\infty\overline{\beta}}\ar[d]_-{Qp} & QC_\beta\ar[ru]_-{\Omega^\infty\gamma_\sharp} \\
                              & Q\Sigma X\ar[ru]_-{\Omega^\infty\alpha^\flat}
                }$$
On the other hand, by standard property of cofibre sequences, the cone of the adjoint map $C_{\widetilde{\alpha}}$ and $QC_\alpha$ are related through a map $C_{\widetilde{\alpha}}\to QC_\alpha$. This allows us to complete the above diagram to a commutative diagram
$$\xymatrix{
X \ar[rr]^-{\widetilde{\alpha}}&& QY\ar[rr]^-{\Omega^\infty\beta}\ar[dd]_-{Q\iota_\alpha}\ar[ld]&& QZ\ar[ddl]\ar[rr]^-{\Omega^\infty\gamma}\ar[dd]_-{Q\iota_\beta} && QW\\ 
&C_{\widetilde{\alpha}}\ar[dd]\ar[rd]&&&  &&\\ 
&& QC_\alpha\ar[rruu]_-{\Omega^\infty\overline{\beta}}\ar[dd]_-{Qp}&C_{\Omega^\infty\beta}\ar[r]\ar[rrruu]^-{(\Omega^\infty\gamma)_\sharp}
            &QC_\beta\ar[rruu]_-{\Omega^\infty\gamma_\sharp} \\
                                     &\Sigma X\ar[rd]\ar[rru]_{\widetilde{\alpha}^\flat}   &&&&&\\
                                     && Q\Sigma X\ar[rruu]_-{\Omega^\infty\alpha^\flat}
                }$$
which shows that the composition $\widetilde{\alpha}^\flat\circ(Q\gamma)_\sharp:\Sigma X\to QW$, which represents the stable adjoint of $\{\alpha,\beta,\gamma\}$, does factorise throughout $(Q\gamma_\sharp)\circ(Q\alpha^\flat)=Q(\gamma_\sharp\circ\alpha^\flat):Q\Sigma X\to QW$. Since the inclusion $\Sigma X\to Q\Sigma X$ induces a monomorphism in $\Z/2$-homology, hence in order to show that the stable adjoint of $\{\alpha,\beta,\gamma\}$ is trivial/nontrivial in homology, it is enough to work with $Q(\gamma_\sharp\circ\alpha^\flat)$ or rather with $(Q\gamma_\sharp)\circ\widetilde{\alpha^\flat}$ which $Q(\gamma_\sharp\circ\alpha^\flat)$ is an infinite loop extension of it. We shall use these observations and their combinations while working with homology of Toda brackets. Let's note that\\
$(1)$ by the above diagram, for the sake of homology calculations we may just work with the diagram obtained from taking adjoints;\\
$(2)$ as homology is stable under suspension, hence while working with the adjoint diagram, we will allow ourselves to sue various stable splitting results.\\
$(3)$ Similar considerations as above, are in place when we decide to work more unstably. That is, having elements an element in $\pi_i^s$ we may think of its pull back to $\pi_{2i+1}S^{i+1}$ and consider its adjoint maps $S^{2i+1-j}\to\Omega^jS^{i+1}$ and even consider the extension of this map into a $\Omega^j$-map. We then allow ourselves to form Toda bracket of such maps, which is a quite descent job to do, that will ease computations.

\section{Hurewicz homomorphism and products: Proof of Theorem \ref{main0}}
We begin with the following.

\begin{lmm}\label{ij}
Let $f\in{\pi_i}Q_0S^0,g\in{\pi_j}Q_0S^0$ with $i,j>0$ and $i\neq j$. Then $h(fg)=0$.
\end{lmm}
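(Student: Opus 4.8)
The plan is to realise the ring product $fg$ geometrically as a composite that factors through $QS^{\max(i,j)}$, and then to exploit the connectivity gap in the homology of $QS^n$ for $n>0$. Since the stable homotopy ring is graded commutative and $h$ is additive, we have $h(fg)=\pm h(gf)$, so I may assume $i<j$ without loss of generality.

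First I would represent $fg$ as the composite
$$S^{i+j}\stackrel{\widehat{\Sigma^j f}}{\lra} QS^j\stackrel{\Omega^\infty g}{\lra}QS^0,$$
where $\Sigma^j f:S^{i+j}\to S^j$ is the $j$-fold suspension of a representative of $f$, the map $\widehat{\Sigma^j f}$ is its stable adjoint, and $\Omega^\infty g=Qg$ is the infinite loop map obtained by applying $\Omega^\infty$ to the stable map $g:S^j\to S^0$. By the adjointness discussion of Subsection \ref{iteratedloop}, the composite $Qg\circ\widehat{\Sigma^j f}$ is the stable adjoint of $g\circ\Sigma^j f=fg$, so it represents $fg$ in $\pi_{i+j}Q_0S^0\simeq\pi_{i+j}^s$. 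Consequently
$$h(fg)=(Qg)_*\big((\widehat{\Sigma^j f})_*\iota_{i+j}\big)=(Qg)_*\,h(\widehat{\Sigma^j f}),$$
where $\iota_{i+j}$ is the fundamental class of $S^{i+j}$ and $h(\widehat{\Sigma^j f})\in H_{i+j}(QS^j;\Z)$.

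The crux is then to show that $h(\widehat{\Sigma^j f})=0$, and this I would read off from the homology of $QS^j$. The space $QS^j$ is $(j-1)$-connected with bottom class $\iota_j\in H_j(QS^j)$, and by the Snaith splitting $\Sigma^\infty QS^j\simeq\bigvee_{r\geqslant 1}\Sigma^\infty D_rS^j$ together with the fact that $D_rS^j=E\Sigma_r\ltimes_{\Sigma_r}(S^j)^{\wedge r}$ has bottom cell in dimension $rj$, the reduced homology $\widetilde H_*(QS^j;\Z)$ vanishes in the range $j<*<2j$. Equivalently, in the Dyer--Lashof description $H_*(QS^j;\Z/2)\simeq\Z/2[Q^I\iota_j:\ex(Q^I\iota_j)>0]$, every polynomial generator other than $\iota_j$ has degree at least $2j+1$, while the lowest decomposable $\iota_j^2$ sits in degree $2j$. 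Since $0<i<j$ forces $j<i+j<2j$, we obtain $h(\widehat{\Sigma^j f})\in\widetilde H_{i+j}(QS^j)=0$, and hence $h(fg)=(Qg)_*(0)=0$.

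The main obstacle I anticipate is purely bookkeeping: making the identification $Qg\circ\widehat{\Sigma^j f}=\widehat{fg}$ precise, i.e. checking that applying $\Omega^\infty$ to $g$ and composing with the stable adjoint of $\Sigma^j f$ genuinely computes the adjoint of the product, which is exactly the compatibility recorded in Subsection \ref{iteratedloop}. One must also keep track that the argument is symmetric in $i$ and $j$, so that the hypothesis $i\neq j$ is used precisely to guarantee $\min(i,j)<\max(i,j)$, placing $i+j$ strictly inside the homology gap of $QS^{\max(i,j)}$. I note that the borderline case $i=j$ lands $i+j=2j$ exactly on $\iota_j^2$, which is where the square (Kervaire) classes appear; this is consistent with, and anticipates, the distinguished role of $i=j$ in Theorem \ref{main0}.
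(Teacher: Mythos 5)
Your proof is correct, and while it sets up exactly the same composite as the paper, it closes the argument by a genuinely different mechanism. Both you and the paper represent $fg$ by the adjoint composition $S^{i+j}\lra QS^j\lra Q_0S^0$, reduce to the case $i<j$ by graded commutativity, and reduce the lemma to showing the first map is trivial in homology. At that point the paper invokes Freudenthal: since $i<j$, the stable map $S^{i+j}\to S^j$ pulls back to a genuine map of spheres, so the adjoint factors as $S^{i+j}\to S^j\to QS^j$ and the Hurewicz image dies already in $H_{i+j}(S^j)=0$; the paper must then remark that the indeterminacy in choosing the desuspension is irrelevant. You instead stay with the adjoint $S^{i+j}\to QS^j$ and prove the stronger statement that the target group itself vanishes: $\widetilde H_{i+j}(QS^j;\Z)=0$ because $j<i+j<2j$ lies in the gap between the bottom cell of $D_1S^j=S^j$ and the bottom cell of $D_2S^j$ (dimension $2j$) in the Snaith splitting, equivalently between $\iota_j$ and the first decomposable $\iota_j^2$ (with the generators $Q^I\iota_j$, $I\neq\phi$, of positive excess starting in degree $2j+1$) in the Dyer--Lashof presentation. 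Your verification of the gap is right both mod $2$ and integrally, so your argument covers the integral statement exactly as the lemma is phrased, hence every prime at once, just as the paper notes. What each approach buys: the paper's route is more elementary (only Freudenthal plus $H_*S^j$), while yours requires the homology of $QS^j$ as input but eliminates all choices --- there is no pullback whose indeterminacy needs dismissing --- and it makes transparent why $i=j$ is the true borderline, since $i+j=2j$ lands precisely on $\iota_j^2$, which is where the unstable Hopf invariant analysis of Theorem \ref{nilpotence} (and hence Theorem \ref{main0}) takes over. Both arguments are complete.
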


\begin{proof}
The product $gf$ in ${\pi_*^s}$ is determined by the composition of stable maps $S^{i+j}\stackrel{f}{\to}S^j\stackrel{g}{\to}S^0$ which we wish to compute homology of its stable adjoint $S^{i+j}\to QS^j\to QS^0$. For $i<j$ the map $S^{i+j}\lra S^j$ is in the stable range, so it can be taken as a genuine map, i.e. $S^{i+j}\to QS^j$ does factorise as $S^{i+j}\to S^j\to QS^j$ where the part $S^{i+j}\to S^j$ is not necessarily unique. The stable adjoint of $gf$ then maybe viewed as $S^{i+j}\stackrel{f}{\lra}S^j\stackrel{g}{\lra}Q_0S^0$
which is trivial in homology for dimensional reasons. The indeterminacy in choosing the pull back $S^{i+j}\to S^j$ is irrelevant here and will not effect the dimensional reason. Hence, $h(gf)=0$. The case $i>j$ is similar, noting that $\pi_*^s=\pi_*Q_0S^0$ is commutative.
\end{proof}

Note that the above result hold integrally, and consequently on ${_p\pi_*^s}$ for any prime $p$. According to the above lemma, $h(fg)\neq 0$ may occur if $f,g\in{\pi_n}Q_0S^0$. Note that for $f\in{\pi_n^s}$ there exists $\widetilde{f}\in\pi_{2n+1}S^{n+1}$, not necessarily unique, which maps to $f$ under the stabilisation $\pi_{2n+1}S^{n+1}\lra\pi_n^s$. We shall say $f$ is detected by the unstable Hopf invariant, if $\widetilde{f}$ is detected by cup-squaring operation in its mapping cone, i.e. $g_{n+1}^2\neq 0$ in $H^*(C_{\widetilde{f}};\Z)$.

\begin{thm}\label{nilpotence}
Suppose $f,g\in\pi_nQ_0S^0$  with $h(fg)\neq 0$ with $h:\pi_*Q_0S^0\lra H_*(Q_0S^0;\Z)$ being the Hurewicz homomorphism. Then both $f$ and $g$ are detected by the unstable Hopf invariant.
\end{thm}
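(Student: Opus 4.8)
The plan is to reduce the statement about $h(fg)$ to a computation of spherical classes in $H_*(QS^n;\Z/2)$, exploiting that $f$ and $g$ may be treated symmetrically. Since $\pi_*^s$ is graded commutative, $fg=\pm gf$, so $h(gf)=\pm h(fg)\neq 0$ as well; it therefore suffices to prove that $f$ is detected by the unstable Hopf invariant, the statement for $g$ following by interchanging the two roles. The product $fg$ is represented by the composite of stable maps $S^{2n}\stackrel{f}{\lra}S^n\stackrel{g}{\lra}S^0$, whose stable adjoint is $S^{2n}\stackrel{\widetilde{f}}{\lra}QS^n\stackrel{\Omega^\infty g}{\lra}Q_0S^0$ with $\Omega^\infty g$ the infinite loop extension of the adjoint of $g$ (Subsection \ref{iteratedloop}). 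Naturality of the Hurewicz map gives $h(fg)=(\Omega^\infty g)_*h(\widetilde{f})$, so from $h(fg)\neq 0$ we conclude $h(\widetilde{f})\neq 0$ in $H_{2n}(QS^n;\Z/2)$.

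Next I would pin down the possible values of $h(\widetilde{f})$. As $\widetilde{f}$ factors through a sphere, $h(\widetilde{f})$ is a spherical class, hence primitive in the Hopf algebra $H_*(QS^n;\Z/2)$. I claim the unique nonzero primitive in degree $2n$ is $\iota_n^2$, where $\iota_n\in H_n(S^n)\subset H_n(QS^n)$. Indeed $\iota_n$ is primitive, and the coproduct Cartan formula together with the vanishing of the Dyer--Lashof operations on the unit in positive degrees shows inductively that every generator $Q^I\iota_n$ is primitive; thus $H_*(QS^n;\Z/2)$ is a polynomial algebra on primitive generators, and its primitives are spanned by the $2$-power powers $(Q^I\iota_n)^{2^k}$. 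In degree $2n$ a degree count eliminates all of these but $\iota_n^2$: a generator $Q^I\iota_n$ of degree $2n$ would need $\int i_j=n$, forcing $\ex(Q^I\iota_n)=-2(i_2+\cdots+i_s)\leqslant 0$, so there is no positive--excess generator in degree $2n$; the only generator of degree $n$ is $\iota_n$ itself, giving the square $\iota_n^2$; and higher $2$-power powers would require generators in degrees $n/2^{k-1}<n$, of which there are none. Hence $h(\widetilde{f})=\iota_n^2$.

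It then remains to identify the condition $h(\widetilde{f})=\iota_n^2$ with the statement that $f$ is detected by the unstable Hopf invariant. For this I would pass to the James model: a lift $\overline{f}\in\pi_{2n+1}S^{n+1}=\pi_{2n}\Omega S^{n+1}$ of $f$ factors the adjoint as $S^{2n}\to\Omega S^{n+1}\to QS^n$, where the stabilisation map carries the James generator $\iota_n^2\in H_{2n}(\Omega S^{n+1};\Z/2)=\Z/2[\iota_n]$ to the Pontrjagin square $\iota_n^2\in H_{2n}(QS^n;\Z/2)$. Thus $h(\widetilde{f})=\iota_n^2$ is equivalent to the Hurewicz image of $\overline{f}$ being the James square, equivalently to the James--Hopf invariant $H_2\colon\Omega S^{n+1}\to\Omega S^{2n+1}$ detecting $\overline{f}$, equivalently to $Sq^{n+1}$ acting nontrivially (that is, $g_{n+1}^2\neq 0$) in $H^*(C_{\overline{f}})$ --- precisely the condition that $f$ be detected by the unstable Hopf invariant. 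Applying the identical argument to $gf$ yields the corresponding conclusion for $g$, completing the proof.

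The main obstacle, beyond this bookkeeping, is twofold. First, one must ensure the primitive computation genuinely determines $h(\widetilde{f})$; this rests on $H_*(QS^n;\Z/2)$ being primitively generated, which the coproduct Cartan formula supplies cleanly, so this step, though central, is not the hard part once set up. The more delicate point is transporting the conclusion to the integral coefficients of the statement: since $fg$ is torsion one must argue prime by prime, and the essential content is at the prime $2$, where one needs the mod $2$ reduction of $h(fg)$ to remain nonzero so that the analysis above applies, and where the identification of the class $\iota_n^2$ with the classical integer Hopf invariant must be kept honest across the reduction. This coefficient bookkeeping, rather than the homological computation, is where I expect the care to be needed.
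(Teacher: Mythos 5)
Your homological skeleton is essentially the paper's: pass to the stable adjoint $S^{2n}\to QS^n\to Q_0S^0$, lift through $\Omega S^{n+1}$ by Freudenthal, and identify the degree-$2n$ spherical class with the James square, hence with the nontriviality of the cup square in the mapping cone. Your excess computation is also correct, and in fact shows more than you use: since the generators $Q^I\iota_n$ of positive excess other than $\iota_n$ itself all live in degrees $>2n$, the whole group $H_{2n}(QS^n;\Z/2)$ is $\Z/2\{\iota_n^2\}$, so the primitivity discussion is redundant (and redundant a second time, because once you have the factorisation through $\Omega S^{n+1}$ you only ever need $H_{2n}(\Omega S^{n+1})$, not $H_{2n}(QS^n)$). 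The genuine gap is exactly where you flagged it, and it does not go away: the hypothesis is $h(fg)\neq 0$ in \emph{integral} homology, and integral nonvanishing does not imply nonvanishing of the mod $2$ reduction. Since $\pi_i^s$ is finite for $i>0$, the class $h(fg)\in H_{2n}(Q_0S^0;\Z)$ is torsion, and the reduction $H_*(-;\Z)\to H_*(-;\Z/2)$ kills $2H_*(-;\Z)$ --- in particular all odd torsion, and also any nonzero $2$-torsion class of the form $2x$ with $x$ of order $2^k$, $k\geqslant 2$. Moreover your fallback, "argue prime by prime, the essential content is at $2$", is not quite right as stated: for $n$ odd and $p$ odd one has $\iota_n^2=0$ in $H_{2n}(QS^n;\Z/p)$ and the first higher generator lies above degree $2n$, so the mod $p$ analysis degenerates by parity and would need a separate (if easy) discussion when $n$ is even.

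The paper removes all of this bookkeeping by running your third step integrally from the outset. After the Freudenthal factorisation $S^{2n}\stackrel{\widetilde{f}_n}{\lra}\Omega S^{n+1}\lra QS^n\lra Q_0S^0$, James' theorem gives that the Pontrjagin ring $H_*(\Omega S^{n+1};\Z)$ is the tensor algebra on $g_n$, so $H_{2n}(\Omega S^{n+1};\Z)\cong\Z$ generated by $g_n^2$; hence $h(fg)\neq 0$ integrally forces $h(\widetilde{f}_n)=\lambda g_n^2$ with $\lambda\in\Z$ nonzero, and by the classical identification \cite[Proposition 6.1.5]{Harper} this is equivalent to $g_{n+1}^2=\pm\lambda g_{2n+2}$ in $H^*(C_{\widetilde{f}};\Z)$ --- which is precisely the paper's definition of detection by the unstable Hopf invariant (the \emph{integral} cup square being nonzero, not mod $2$ Hopf invariant one; your mod $2$ argument, where it applies, proves the stronger odd-Hopf-invariant conclusion, but only from the stronger hypothesis that the mod $2$ reduction survives). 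So the repair is simple: replace the $\Z/2$ computation in $QS^n$ by the one-line integral computation in $\Omega S^{n+1}$, and the coefficient-transfer problem you correctly worried about never arises.
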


\begin{proof}
We are interested in the stable adjoint of $S^{2n}\stackrel{f}{\lra}S^n\stackrel{g}{\lra}S^0$ given by
$$S^{2n}\stackrel{f_n}{\lra} QS^n\stackrel{g}{\lra} Q_0S^0.$$
As noted above, for dimensional reasons, the mapping $f_n$ factors as $S^{2n+1}\stackrel{\widetilde{f}_n}{\lra}\Omega S^{n+1}\lra QS^n$ where $\widetilde{f}_n$, which is not necessarily unique, is the adjoint for an appropriate $\widetilde{f}$; the map $i:\Omega S^{n+1}\lra QS^n$ is the stablisation map. Hence, the stable adjoint of $fg$ can be seen as a composite
$$S^{2n}\lra \Omega S^{n+1}\lra QS^n\lra Q_0S^0.$$
Now, $h(fg)\neq 0$ implies that $h(\widetilde{f}_n)\neq 0$. This shows that $h(\widetilde{f}_n)=\lambda g_n^2$ for some nonzero $\lambda\in\Z$. This latter implies that $h(f_n)=\lambda g_n^2$. On the other hand, it is well know that $h(\widetilde{f}_n)=\lambda g_n^2$ if and only if $g_{n+1}^2=\pm\lambda g_{2n+2}$ in $H^*(C_{\widetilde{f}};\Z)$, i.e. $f$ is detected by the unstable Hopf invariant (see for example \cite[Proposition 6.1.5]{Harper}). Similarly, $g$ is also detected by the unstable Hopf invariant.
\end{proof}

The result in integral case, implies the $p$-primary case. In particular, we have the following.

\begin{crl}
Suppose $f,g\in{_2\pi_n}Q_0S^0$  with $h(fg)\neq 0$ then both $f$ and $g$ are Hopf invariant one elements, i.e. $f,g=\eta,\nu,\sigma$. Here,  $h:{_2\pi_*}Q_0S^0\lra H_*(Q_0S^0;\Z/2)$ is the mod $2$ Hurewicz homomorphism.
\end{crl}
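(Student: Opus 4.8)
The plan is to upgrade the integral Theorem \ref{nilpotence} to the present mod $2$ statement by tracking not merely the \emph{nonvanishing} of the unstable Hopf invariant of $\widetilde{f}$ but its \emph{parity}. First I would note that the mod $2$ Hurewicz homomorphism factors through the integral one followed by reduction of coefficients,
$$\pi_{2n}Q_0S^0\stackrel{h}{\lra}H_{2n}(Q_0S^0;\Z)\lra H_{2n}(Q_0S^0;\Z/2),$$
so that it suffices to run the argument of Theorem \ref{nilpotence} verbatim with $\Z/2$-coefficients. As there, the stable adjoint of $fg$ factors through $S^{2n}\to\Omega S^{n+1}\to QS^n\to Q_0S^0$, and $h(fg)\neq 0$ in $\Z/2$-homology forces $h(\widetilde{f}_n)\neq 0$ in $H_{2n}(\Omega S^{n+1};\Z/2)$. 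Since $H_*(\Omega S^{n+1};\Z/2)=\Z/2[g_n]$ is polynomial on the fundamental class $g_n$, one has $h(\widetilde{f}_n)=\lambda\,g_n^2$ with $g_n^2\neq 0$, and nonvanishing mod $2$ is exactly the statement that $\lambda$ is odd; dually this says that the mod $2$ unstable Hopf invariant of $\widetilde{f}$ is $1$, i.e.\ $Sq^{n+1}$ acts nontrivially on the bottom class of $H^*(C_{\widetilde{f}};\Z/2)$. The same conclusion holds for $g$.

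Working mod $2$ rather than simply quoting the integral conclusion is the crux: an integral Hopf invariant may be a nonzero \emph{even} number (for instance the Whitehead products $[\iota_{n+1},\iota_{n+1}]$ have Hopf invariant $\pm 2$ whenever $n$ is even), and such elements map trivially under $h$ in $\Z/2$-homology. It is precisely the oddness of $\lambda$, equivalently $Sq^{n+1}\neq 0$, that encodes the classical Hopf invariant one condition.

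Next I would invoke Adams' solution of the Hopf invariant one problem. The mapping cone $C_{\widetilde{f}}$ has exactly two cells, in dimensions $n+1$ and $2n+2$, so every decomposable Steenrod operation vanishes on $H^*(C_{\widetilde{f}};\Z/2)$; hence $Sq^{n+1}\neq 0$ requires $Sq^{n+1}$ to be indecomposable, which forces $n+1$ to be a power of $2$, and Adams' theorem restricts this power to lie in $\{2,4,8\}$. Therefore $n\in\{1,3,7\}$. Finally I would identify the elements: for $n=1,3,7$ the Hopf invariant one class in ${_2\pi_n^s}$ is $\eta,\nu,\sigma$ respectively up to an odd multiple, and since $h(\alpha f)=h(f)$ for any odd-degree self-map $\alpha$ we may take $f\in\{\eta,\nu,\sigma\}$, and likewise $g$. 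As $f$ and $g$ share the common degree $n$, they are the same Hopf invariant one element (up to odd multiples), completing the proof.

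The step carrying the real content is the parity bookkeeping in the first paragraph, namely verifying that the $\Z/2$-reduction of the argument of Theorem \ref{nilpotence} genuinely detects $\lambda \bmod 2$ and hence the \emph{mod $2$} Hopf invariant; this rests on the polynomiality of $H_*(\Omega S^{n+1};\Z/2)$, which guarantees $g_n^2\neq 0$. Once this is in place, the remaining steps are routine applications of Adams' theorem together with the structure of the low-dimensional $2$-primary stable stems.
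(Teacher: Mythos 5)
Your proposal is correct and is essentially the paper's own argument: the paper deduces this corollary from Theorem \ref{nilpotence} with the terse remark that the integral case implies the $2$-primary case, and your mod $2$ rerun of that proof (factoring the stable adjoint through $\Omega S^{n+1}$, getting $h(\widetilde{f}_n)=\lambda g_n^2$ with $\lambda$ odd since $g_n^2\neq 0$ in the tensor algebra $H_*(\Omega S^{n+1};\Z/2)$, equivalently $Sq^{n+1}\neq 0$ on the cone, then Adams' theorem) simply makes explicit the parity bookkeeping that the paper's remark suppresses. The only blemish is in your illustrative aside: $[\iota_{n+1},\iota_{n+1}]\in\pi_{2n+1}S^{n+1}$ has Hopf invariant $\pm 2$ when $n+1$ is even, i.e.\ when $n$ is \emph{odd}, not even (for $n$ even the integral Hopf invariant vanishes identically) --- a harmless slip, since nothing in the proof depends on it.
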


In order to complete the proof of Theorem \ref{main0} note that by above observations the nontrivial image of the Hurewicz homomorphism on the the ideal $\la f:f=\eta,\nu,\sigma\ra$ can only arise from elements $f,f^2$. Moreover, choosing $g:S^0\lra S^0$ to be a stable map of odd degree, $f=\eta,\nu,\sigma$, then $fg$ is an odd multiple of a Hopf invariant one element which we know map nontrivially under $h$.\\
For the case of $h^\circ$ we note that given $\alpha\in\pi_iQ_0S^0$ and $f\in\pi_j\Omega^\infty E$ the Hurewicz image of $f\circ \alpha$ is determined by the homology of the composition
$$S^{i+j}\stackrel{\alpha}{\lra} QS^j\stackrel{f}{\lra} \Omega^\infty E.$$
By similar reasons, if $i<j$ then by Fruedenthal's theorem $\alpha$ doesfactorise through some map $S^{i+j}\to S^j$, hence $h^\circ(\alpha\circ f)=0$. If $i=j$ then $\alpha$ admits a factorisation as $S^{2i}\to\Omega\Sigma S^i\to QS^j\to \Omega^\infty E$ which if is nontrivial in homology which imply that $\alpha$ is detected by the unstable Hopf invariant, so $\alpha\in\{\eta,\nu,\sigma\}$. Finally, for the case $i>j$, note that $(i-j+1)$-th adjoint of $\alpha:S^{i+j}\to QS^j$ is a map $S^{2i+1}\to QS^{i+1}$ which does factorise throughout $S^{2i+1}\to S^{i+1}$ so $e_*^{i-j+1}h^\circ(\alpha\circ f)=0$.


\section{Proof of Theorem \ref{ideal}}
Suppose $S\subseteq{_2\pi_{*>0}^s}$ on which the Curtis conjecture holds, i.e. if $f\in S$ and $h(f)\neq 0$ then $f$ equals to a Hopf invariant one or Kervaire invariant one element, or an odd multiple of them, modulo other terms which all vanish under $h$. We wish to verify that the conjecture holds on $\la S\ra$.\\

Since ${_2\pi_*^s}$ is a graded ring then a typical element in $\la S\ra$ might be written as $\int \alpha f$ with $f\in S$ and $\alpha\in{_2\pi_j^s}$ where we may arrange terms of this sum according to their grading. Since $h$ is a $\Z$-module homomorphism, it is then enough to verify the theorem at each grading. Consider a finite sum $\int_{i+j=n,i>0}\alpha_jf_i$ with $f_i\in {_2\pi_i^s}\cap S$ and $\alpha_j\in{_2\pi_j^s}$ such that $h(\int_{i+j=n,i>0} \alpha_jf_i)\neq 0$. Then, there is at least one term $\alpha_jf_i$  with $h(\alpha_jf_i)\neq 0$. We have two cases: $j=0$ and $j>0$.\\

\tb{Case} $j>0$. By Theorem \ref{main0}, $f_i=\alpha_j$ is a Hopf invariant one element or an odd multiple of these elements. Therefore, $\alpha_jf_i$ is one of $\eta^2,\nu^2,\sigma^2$, living in gradings $2,6,14$ respectively. Note that $2$-component of $2$-stem and $6$-stem are known to be isomorphic to $\Z/2$. Therefore, in these case the sum $\int \alpha_jf_i$ has only one term which we determined above to be one of the Kervaire invariant one elements $\eta^2,\nu^2$. This proves the theorem for this case. If $\alpha_jf_i=\sigma^2$ then the sum $\int \alpha_jf_i$ lives in ${_2\pi_{14}^s}\simeq(\Z/2)^{\oplus 2}$ generated respectively by $\sigma^2$ and $\kappa$. Consequently, the sum $\int \alpha_jf_i$ can be written as $\sigma^2+\alpha\kappa$ with $\alpha\in{_2\pi_0^s}$ which could be a map of even or odd degree. In any case, as $h(\kappa)=0$, the result follows.\\

\tb{Case} $j=0$. If $j=0$ then $\alpha_j$ is a map of odd degree which means $h(f_i)=h(\alpha_jf_i)\neq 0$. By assumption $f$ is either a Hopf invariant one element or Kervaire invariant one element or an odd multiple of these elements, so $\alpha_jf_i$ is. If $\alpha f$ is a Hopf invariant one element, then the sum $\int \alpha_jf_i$ lives in one of $1$-, $3$-, or $7$-stems which are generated by $\eta,\nu,\sigma$ respectively. Hence, the whole sum $\int \alpha_jf_i$ must be an odd multiple of these elements. This verifies the theorem in this case. The case of $f$ being a Kervaire invariant one is verified similar to the above case.\\

\subsection{Computing $h(\kappa)$ and $h(\overline{\nu})$}

We begin with $\overline{\nu}\in{_2\pi_8^s}$ which is an element of order $2$.
\begin{lmm}\label{h(nu-bar)}
Let $\overline{\nu}\in{_2\pi_8^s}$ be a generator represented by the Toda bracekt $\{\nu,\eta,\nu\}$. Then it maps trivially under $h:{_2\pi_8}Q_0S^0\to H_8(Q_0S^0;\Z/2)$.
\end{lmm}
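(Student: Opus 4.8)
The plan is to realise $\overline\nu$ as a Toda bracket of sphere maps and push the computation of its Hurewicz image into the homology of a small two-cell complex, where the only candidate classes can be ruled out by the Steenrod action. Concretely, I would represent $\overline\nu=\{\nu,\eta,\nu\}$ by the diagram of stable maps
$$S^7\stackrel{\nu}{\lra}S^4\stackrel{\eta}{\lra}S^3\stackrel{\nu}{\lra}S^0,$$
whose successive composites vanish because $\eta\nu,\nu\eta\in{_2\pi_4^s}=0$, so the bracket is defined and lies in ${_2\pi_8^s}$. Note first that the naive application of Proposition \ref{tripleToda}(i) is inconclusive here: taking $\gamma=\nu:S^3\to S^0$ gives $\gamma_*=h(\nu)\neq 0$ since $\nu$ is a Hopf invariant one element, so the proposition permits a nonzero image. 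This is exactly the case $\gamma_*\neq 0$ left open by Corollary \ref{Todaextension1}, and likewise Theorem \ref{necessary} does not apply since $\overline\nu$ has Adams filtration $2$ (detected by the Massey product $\langle h_2,h_1,h_2\rangle=h_1h_3$). A genuinely finer homology computation is therefore required.

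Next I would invoke the adjoint description of Toda brackets from Subsection \ref{adjointtoda}. Writing $\alpha=\nu$, $\beta=\eta$, $\gamma=\nu$ as above, the relevant cone is $C_\beta=C_\eta=S^3\cup_\eta e^5$, with $\widetilde H_*(C_\eta;\Z/2)$ generated by classes $u_3,u_5$ satisfying $Sq^2u_3=u_5$ (hence $Sq^2_*u_5=u_3$, $Sq^1_*u_3=0$). The construction there factors the stable adjoint of $\overline\nu$ as
$$S^8\stackrel{\widetilde{\alpha^\flat}}{\lra}QC_\eta\stackrel{Q\gamma_\sharp}{\lra}QS^0,$$
so $h(\overline\nu)$ is the image of the fundamental class $[S^8]$ under this composite. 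It therefore suffices to prove that $(\widetilde{\alpha^\flat})_*[S^8]=0$ already in $H_8(QC_\eta;\Z/2)$; this class is spherical, hence primitive and annihilated by every $Sq^i_*$ with $i>0$.

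The core step is then to enumerate $H_8(QC_\eta;\Z/2)=\Z/2[Q^Iu_3,Q^Iu_5:I\textrm{ admissible},\ex>0]$ in degree $8$. The only indecomposable generator is $Q^5u_3$ (single operation, $\ex=5-3=2>0$), all admissibles of length $\geq 2$ landing in degrees $\geq 15$; the only degree-$8$ monomial of positive height is the decomposable $u_3u_5$. The decomposable $u_3u_5$ is not primitive, as $\overline\psi(u_3u_5)=u_3\otimes u_5+u_5\otimes u_3\neq 0$. For $Q^5u_3$, which is primitive by the coproduct formula, the Nishida relation (\ref{Nishida}) gives
$$Sq^2_*Q^5u_3=\binom{3}{2}Q^3u_3+\binom{3}{0}Q^4Sq^1_*u_3=u_3^2\neq 0,$$
using $Q^3u_3=u_3^2$ and $Sq^1_*u_3=0$. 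Since no nonzero combination of $Q^5u_3$ and $u_3u_5$ is simultaneously primitive and $Sq^2_*$-annihilated, $H_8(QC_\eta;\Z/2)$ carries no nonzero spherical class. Hence $(\widetilde{\alpha^\flat})_*[S^8]=0$, the composite above is trivial in homology, and $h(\overline\nu)=0$ in $H_8(Q_0S^0;\Z/2)$.

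The main obstacle is not the arithmetic but setting up the reduction correctly: the key gain of the adjoint factorisation of Subsection \ref{adjointtoda} is that it lets me bypass $\gamma_\sharp$ entirely, since the spherical class dies one stage earlier in $QC_\eta$ than at the level of $QS^0$ where $\gamma=\nu$ is itself spherical. The real content is that the Steenrod action, via $Sq^2_*Q^5u_3=u_3^2$, separates the unique degree-$8$ indecomposable from being spherical even though the $\gamma$-end of the bracket is Hopf invariant one. The remaining verifications — exhaustiveness of the degree-$8$ list and primitivity of $Q^5u_3$ — are routine.
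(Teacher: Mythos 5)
Your proof is correct, but it takes a genuinely different---and substantially heavier---route than the paper's. The paper observes that $\nu:S^7\to S^4$ and $\eta:S^4\to S^3$ are genuine maps of spaces (everything is in the stable range), so the bracket is realised at space level as $S^8\stackrel{\nu^\flat}{\lra}C_\eta\stackrel{\nu_\sharp}{\lra}Q_0S^0$ with $C_\eta=S^3\cup_\eta e^5$ an honest $5$-dimensional complex; then $(\nu^\flat)_*=0$ purely for dimensional reasons, since $H_8(C_\eta;\Z/2)=0$, and $h(\overline{\nu})=0$ in one line. You instead pass to the stable-adjoint factorisation $S^8\to QC_\eta\to QS^0$ of Subsection \ref{adjointtoda} and kill the would-be spherical class inside $H_8(QC_\eta;\Z/2)$: your enumeration of the degree-$8$ basis $\{Q^5u_3,\ u_3u_5\}$ is exhaustive (length-$\geqslant 2$ admissibles of positive excess indeed start in degree $15$, and $Q^3u_5=0$ by excess), primitivity correctly eliminates any class with nonzero $u_3u_5$-component, and the Nishida computation $Sq^2_*Q^5u_3=Q^3u_3=u_3^2\neq 0$ is right, so no nonzero spherical class exists in that degree. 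Your framing remarks are also accurate: Proposition \ref{tripleToda} and Corollary \ref{Todaextension1} are inconclusive here precisely because $\gamma=\nu$ has $\gamma_*\neq 0$ while $\alpha=\nu:S^7\to QS^4$ is trivial in homology, and Theorem \ref{necessary} does not apply since $\overline{\nu}$ has Adams filtration $2$. What your route buys: it never needs the desuspension observation, and it is exactly the template the paper is forced to use for $h(\kappa)$ in Lemma \ref{h(kappa)}, where dimensional reasons fail---so you have proved the easy case by the hard method. What the paper's route buys: brevity, and the irrelevance of the bracket's indeterminacy is immediate. On that last point you should add one sentence making explicit that your argument is independent of the choices of extension and co-extension maps (any choice of $\alpha^\flat$ produces a spherical class in $H_8(QC_\eta;\Z/2)$, which you have shown must vanish), so the conclusion holds for every representative of $\{\nu,\eta,\nu\}$, as the lemma requires.
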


\begin{proof}
Consider the composition $S^7\stackrel{\nu}{\lra} S^4\stackrel{\eta}{\lra} S^3\stackrel{\nu}{\lra} Q_0S^0$ where all maps are genuine maps of spaces and  successive compositions are trivial. This yields extension and co-extension maps $\nu^\flat,\nu_\sharp$ that we can compose as
$S^8\stackrel{\nu^\flat}{\lra}C_\eta\stackrel{\nu_\sharp}{\lra} Q_0S^0$
which represents $\overline{\nu}:S^8\to Q_0S^0$. Obviously, $(\nu^\flat)_*=0$ as $C_\eta$ has its top cell in dimension $5$. Hence, $h(\overline{\nu})=0$. Note that the vanishing happens for dimensional reasons and the indeterminacy in choosing the extension and co-extension maps is irrelevant here.
\end{proof}

Next consider the generator $\kappa\in {_2\pi_{14}^s}$ which is an element of order $2$. According to Toda \cite{Toda}, it can be represented by a triple Toda bracket $\{\beta,\alpha,\nu\}$ (our order of maps in the bracket is opposite to Toda, and we have dropped the suspensions from our notation for simplicity). Here, $\alpha:\Sigma K\to S^0$ is an extension of $\eta$ implied by $2\eta=0$ with $K$ being the stable complex $S^0\cup_2e^1$, and $\beta:S^9\to K$ is a coextension of $\overline{\nu}\in{_2\pi_8^s}$. Note that $\Sigma K$ is just $\R P^2$.

\begin{lmm}
By abuse of notation, write $\alpha:\Sigma^3(\Sigma K)\to S^3$ for the third suspension of $\alpha:\Sigma K\to S^0$. Then, for the mapping cone of $\Gamma^6\alpha:\Gamma^6(\Sigma^4K)\to\Gamma^6S^3$ we have
$$H_*(C_{\Gamma^6\alpha};\Z/2)\simeq H_*(\Gamma^6 S^3;\Z/2)\oplus H_{*+1}(\Gamma^6(\Sigma^4K);\Z/2)$$
where $\Gamma^n=\Omega^n\Sigma^n$. Write $g_3\in H_3(\Gamma^6S^3;\Z/2)$ for a generator coming from $S^3$, and $\Sigma^3a_1,\Sigma^3a_2\in H_*(\Gamma^6(\Sigma^4K);\Z/2)$ for the $4$- and $5$-dimensional generators coming from $\Sigma K=\R P^2$ with the relation $Sq^1_*a_2=a_1$. The action of the Steenrod operations $Sq^t_*$ on $H_*(C_{\Gamma^6\alpha};\Z/2)$ then is determined with its action on $\Gamma^6S^3$, $\Gamma^6(\Sigma^4K)$ and the additional relation
$$Sq^2_*\Sigma_*(\Sigma^3a_1)=g_3$$
together with Nishida relations where $\Sigma_*(\Sigma^3a_1)\in H_5(C_{\Gamma^6\alpha};\Z/2)$ is a generator projecting onto a generator of $H_5(\Sigma\Gamma^6(\Sigma^4K);\Z/2)$ under the pinch map $C_{\Gamma^6\alpha}\to\Sigma\Gamma^6(\Sigma^4K)$.
\end{lmm}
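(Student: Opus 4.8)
The plan is to present $H_*(C_{\Gamma^6\alpha};\Z/2)$ as an extension of $A$-modules coming from the cofibre sequence of $\Gamma^6\alpha$, to read off the two ``diagonal'' pieces from the known homology of the two loop spaces, and then to pin down the single off-diagonal relation by naturality from the bottom cell.

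First I would establish the additive splitting. Since $\Sigma^4K$ is concentrated in degrees $4,5$ while $S^3$ sits in degree $3$, the map $\alpha$ is trivial in reduced homology, $\alpha_*=0$. By the naturality formula $(\Omega^6\Sigma^6\alpha)_*Q^Ix=Q^I\alpha_*x$ for Dyer--Lashof operations recalled above, $(\Gamma^6\alpha)_*$ kills every polynomial generator $Q^I(\Sigma^3a_\mu)$ and hence vanishes on all positive-degree classes. The homology long exact sequence of the cofibration
$$\Gamma^6(\Sigma^4K)\stackrel{\Gamma^6\alpha}{\lra}\Gamma^6S^3\stackrel{j}{\lra}C_{\Gamma^6\alpha}\stackrel{\delta}{\lra}\Sigma\Gamma^6(\Sigma^4K)$$
then breaks into short exact sequences of $\Z/2$-vector spaces, which split, giving the stated additive isomorphism: $j_*$ embeds $H_*(\Gamma^6S^3)$ and $\delta_*$ identifies the complementary summand with $\Sigma H_*(\Gamma^6(\Sigma^4K))$, the class $\Sigma_*\zeta$ being a $\delta_*$-preimage of $\Sigma\zeta$ (unambiguous for $\zeta=\Sigma^3a_1$ since $H_5(\Gamma^6S^3)=0$).

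Next I would record the action on the two summands. As $j$ and $\delta$ are genuine maps of spaces, $j_*$ and $\delta_*$ are $A$-linear, so $H_*(C_{\Gamma^6\alpha})$ is an extension of $A$-modules
$$0\lra H_*(\Gamma^6S^3)\stackrel{j_*}{\lra}H_*(C_{\Gamma^6\alpha})\stackrel{\delta_*}{\lra}\Sigma H_*(\Gamma^6(\Sigma^4K))\lra 0$$
whose sub and quotient carry the intrinsic actions on the two iterated loop spaces, each computed by the Nishida relations and the Cartan formula. Consequently $Sq^t_*$ is upper triangular: $Sq^t_*\Sigma_*\zeta=\Sigma_*(Sq^t_*\zeta)+c(\zeta,t)$ with a correction $c(\zeta,t)\in H_*(\Gamma^6S^3)$, and the whole problem reduces to determining $c$. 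To get the bottom relation I would chain two naturality maps of cones. Because $\alpha$ restricts to $\eta$ on the bottom cell $S^4\subset\Sigma^4K$, the commuting square $\alpha\circ i=\eta$ gives a map $C_\eta\simeq\Sigma\C P^2\to C_\alpha$, under which the $5$-dimensional class maps to $\Sigma\Sigma^3a_1$ and $g_3\mapsto g_3$; as $Sq^2_*$ is nonzero in $\Sigma\C P^2$ this yields $Sq^2_*(\Sigma\Sigma^3a_1)=g_3$ in $C_\alpha$. The suspension (unit) maps $E\colon X\to\Gamma^6X$ are natural, so the square formed by $\alpha$ and the two $E$'s induces $C_\alpha\to C_{\Gamma^6\alpha}$ sending $g_3\mapsto g_3$ and $\Sigma\Sigma^3a_1\mapsto\Sigma_*(\Sigma^3a_1)$. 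Naturality of $Sq^2_*$ then transports the relation to $Sq^2_*\Sigma_*(\Sigma^3a_1)=g_3$ in $C_{\Gamma^6\alpha}$.

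Finally I would argue that this single value generates all corrections. On the generators it is already forced by the Adem relation $Sq^3_*=Sq^2_*Sq^1_*$: since $Sq^1_*\Sigma_*(\Sigma^3a_2)=\Sigma_*(\Sigma^3a_1)$ (no correction, as $H_5(\Gamma^6S^3)=0$) one gets $Sq^3_*\Sigma_*(\Sigma^3a_2)=g_3$ with no remaining freedom. For a general class $\Sigma_*Q^I(\Sigma^3a_\mu)$ the leading term $\Sigma_*(Sq^t_*Q^I\Sigma^3a_\mu)$ is evaluated inside $\Gamma^6(\Sigma^4K)$ by the Nishida relations, while the correction is constrained by the $A$-module (Adem) relations from the one value above. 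The hard part will be to show that no off-diagonal correction survives beyond those produced in this way, i.e. that the extension class in $\ext^1_A(\Sigma H_*(\Gamma^6(\Sigma^4K)),H_*(\Gamma^6S^3))$ is detected on the bottom cell; this is delicate precisely because $C_{\Gamma^6\alpha}$ is not itself a loop space, so the Nishida relations cannot be applied to its classes directly. I would dispatch it through the functional-operation viewpoint: the bridging operations are functional Steenrod operations for $\Gamma^6\alpha$, which are natural in $\alpha$ and, because $\alpha_*=0$, are computed from the functional $Sq^2$ of $\eta$ via the Kudo--Araki/Dyer--Lashof formalism and the Nishida relations, hence reduce to the bottom-cell computation already carried out.
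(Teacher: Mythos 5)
Your proposal is correct and follows essentially the same route as the paper: the additive splitting comes from $\alpha_*=0$ (dimensional reasons) together with the formula $(\Gamma^6\alpha)_*Q^Ix=Q^I\alpha_*x$ forcing $(\Gamma^6\alpha)_*=0$, and the relation $Sq^2_*\Sigma_*(\Sigma^3a_1)=g_3$ is deduced, exactly as in the paper, from the fact that $\alpha$ extends $\eta$ (nontrivial $Sq^2$ on the bottom cell) transported through the commutative square built from the unit maps $X\to\Gamma^6X$ using stability of Steenrod operations. Your closing paragraph, probing whether the single bottom-cell relation determines \emph{all} off-diagonal corrections, raises a point the paper silently asserts as ``standard'' and never needs in full strength (its later computation of $h(\kappa)$ only uses the sub- and quotient-module actions via the inclusion and pinch maps, which are honest maps of spaces), so the vagueness of your functional-operation sketch there does not put you below the paper's own standard of proof.
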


\begin{proof}
The mapping $\alpha:\Sigma^3(\Sigma K)\to S^3$ is in the stable range whose homology is trivial for dimensional reasons. The homology of $\Gamma^6\alpha$ is determined by $(\Gamma^6\alpha_*)Q^Ix=Q^I\alpha_*x$ where $x\in H_*\Sigma^4K$ is a generator. Consequently, $(\Gamma^6\alpha)_*=0$. Therefore, the homology decomposes as claimed. For the action of the Steenrod operations, we only need to justify the additional relation and the others are standard. But, this comes form the fact that $\alpha$ is an extension of $\eta$ and there is nontrivial $Sq^2$ action in $\eta$, and that, at least stably, there is commutative diagram
$$\xymatrix{\Gamma^6(\Sigma^4K)\ar[r]^-{\Gamma^6\alpha} & \Gamma^6S^3\\
            \Sigma^4K\ar[u]\ar[r]^-{\alpha}             & S^3.\ar[u]}$$
Since, Steenrod operations are stable this then completes the proof.
\end{proof}

\begin{lmm}\label{h(kappa)}
Let $\kappa\in{_2\pi_{14}^s}$ be a generator of order $2$ given by the a triple Toda bracket $\{\beta,\alpha,\nu\}$. Then $h(\kappa)=0$ for the Hurewicz homomorphism $h:{_2\pi_{14}}Q_0S^0\to H_{14}(Q_0S^0;\Z/2)$.
\end{lmm}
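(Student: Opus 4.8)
\subsection*{Proof strategy for Lemma \ref{h(kappa)}}

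The plan is to represent $\kappa=\{\beta,\alpha,\nu\}$ by the composite produced in Subsection~\ref{adjointtoda}, arranged so that its middle term is exactly the cone $C_{\Gamma^6\alpha}$ analysed in the previous lemma. Concretely, I would write the stable adjoint of $\kappa$ as $\nu_\sharp\circ\beta^\flat$, where $\beta^\flat\colon S^{14}\to C_{\Gamma^6\alpha}$ is the coextension built from the map $\beta\colon S^{13}\to\Gamma^6(\Sigma^4K)$ coming from the coextension of $\overline\nu$, and $\nu_\sharp\colon C_{\Gamma^6\alpha}\to QS^0$ is the extension of $\nu$ over the cone (which exists since $\nu\alpha=0$). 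Then $h(\kappa)=\nu_{\sharp*}\beta^\flat_*[S^{14}]$, and I would decompose $\beta^\flat_*[S^{14}]$ along the splitting $H_{14}(C_{\Gamma^6\alpha})\simeq H_{14}(\Gamma^6S^3)\oplus\widetilde H_{13}(\Gamma^6(\Sigma^4K))$ coming from the mapping cone.

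The first reduction is that $\nu_{\sharp*}$ annihilates the bottom summand: on $\Gamma^6S^3$ the map $\nu_\sharp$ is $\Omega^\infty\nu$, and since $\nu$ is a positive-degree stable map, $(\Omega^\infty\nu)_*Q^Ig_3=Q^I\nu_*g_3=0$, exactly as in the proof of Theorem~\ref{main0}. Hence $h(\kappa)$ depends only on the component of $\beta^\flat_*[S^{14}]$ in $\widetilde H_{13}(\Gamma^6(\Sigma^4K))$, that is, on $q_*\beta^\flat_*[S^{14}]$ for the pinch map $q\colon C_{\Gamma^6\alpha}\to\Sigma\Gamma^6(\Sigma^4K)$. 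The Toda coextension identity $q\circ\beta^\flat=\phi\circ\Sigma\beta$ (derived from the two cofibre sequences defining the bracket; the self-equivalence $\phi$ left ambiguous by the indeterminacy induces an isomorphism in homology and is therefore irrelevant) shows that this component equals $\phi_*\Sigma_*\beta_*[S^{13}]$. Thus it suffices to prove $\beta_*[S^{13}]=0$ in $\widetilde H_{13}(\Gamma^6(\Sigma^4K))$.

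For the last step I would use that $\beta_*[S^{13}]$ is a spherical class, hence primitive and annihilated by every $Sq^i_*$. Write $u,v$ for the four- and five-dimensional generators $\Sigma^3a_1,\Sigma^3a_2$, so that $Sq^1_*v=u$. The generators $Q^Iu,Q^Iv$ of the polynomial algebra $\widetilde H_*(\Gamma^6(\Sigma^4K))$ are primitive, so the primitives are spanned by their $2^k$-th powers; in the odd degree $13$ this leaves only the two admissible positive-excess generators surviving the $\Gamma^6$-truncation, namely $Q^9u$ and $Q^8v$. Hence $\beta_*[S^{13}]=\epsilon_1Q^9u+\epsilon_2Q^8v$. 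Evaluating the Nishida relations gives $Sq^2_*Q^9u=Q^7u$ and $Sq^2_*Q^8v=Q^6v+Q^7u$, so $Sq^2_*\beta_*[S^{13}]=(\epsilon_1+\epsilon_2)Q^7u+\epsilon_2Q^6v$; the required vanishing forces $\epsilon_1=\epsilon_2=0$. (Alternatively, the top-cell projection of $\beta$ is $\Sigma^5\overline\nu$, so Lemma~\ref{h(nu-bar)} kills the $v$-component directly and Steenrod triviality handles the $u$-component.) Therefore $\beta_*[S^{13}]=0$, so $\beta^\flat_*[S^{14}]$ lies in the $g_3$-summand, and $h(\kappa)=\nu_{\sharp*}\beta^\flat_*[S^{14}]=0$.

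The main obstacle is the bookkeeping of the final paragraph: correctly enumerating the admissible, positive-excess Dyer--Lashof monomials that survive the $\Gamma^6$-truncation in degree $13$, isolating the primitive ones, and evaluating the Nishida relations so that no two contributions cancel. One must also verify the coextension identity at the level actually used and confirm that the indeterminacies in $\nu_\sharp$ and $\beta^\flat$ produce no extra homology, which is where the direct-sum decomposition together with the $A$-linearity of $q_*$ and $\nu_{\sharp*}$ does the work.
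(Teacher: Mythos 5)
Your overall architecture is the paper's: you realise $\kappa$ unstably as $S^{14}\stackrel{\beta^\flat}{\lra}C_{\Gamma^6\alpha}\stackrel{\nu_\sharp}{\lra}Q_0S^0$, split $H_{14}(C_{\Gamma^6\alpha};\Z/2)$ into the summands coming from $\Gamma^6S^3$ and $\Sigma\Gamma^6(\Sigma^4K)$, and kill the two components of $\beta^\flat_*[S^{14}]$ separately. Your treatment of the top summand is correct and in one respect sharper than the paper's: since the polynomial generators $Q^E u,Q^E v$ are primitive and degree $13$ is odd, the primitives there are spanned by exactly the two generators $Q^9u=Q_5\Sigma^3a_1$ and $Q^8v=Q_3\Sigma^3a_2$ surviving the $\Gamma^6$-truncation, and your Nishida computation $Sq^2_*(\epsilon_1Q^9u+\epsilon_2Q^8v)=(\epsilon_1+\epsilon_2)Q^7u+\epsilon_2Q^6v$ forces $\epsilon_1=\epsilon_2=0$. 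Notably your basis includes $Q^9\Sigma^3a_1$, which is admissible in $\Gamma^6$ but missing from the paper's list of degree-$14$ generators of the cone, and which the paper's $Sq^1_*$ test alone would not detect (since $Sq^1_*Q^9u=0$); the paper instead handles the decomposables $u^2v$ and $(Q^5u)u$ via the Milnor--Moore square argument, which you bypass by restricting to primitives from the start.

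However, your first reduction contains a genuine gap that invalidates the proof as written. The restriction of $\nu_\sharp$ to the bottom $\Gamma^6S^3$ is not trivial in homology: it is the $6$-fold loop extension of the \emph{stable adjoint} $\widetilde{\nu}:S^3\to Q_0S^0$, and its homology sends $Q^Ig_3\mapsto Q^Ih(\nu)$, not $Q^I\nu_*g_3$. The formula $(\Omega^{n+1}\Sigma^{n+1}f)_*Q^Ix=Q^If_*x$ applies to space-level maps $f$; for a stable map one must feed in the Hurewicz image of its adjoint, and $h(\nu)\neq 0$ precisely because $\nu$ has Hopf invariant one --- that $\nu$ survives $h$ is part of the exceptional list in the Curtis conjecture itself. (The vanishings in Lemma \ref{ij} and Theorem \ref{main0} come from factorisations through spheres for dimensional reasons, never from a claim that the infinite loop extension is trivial in homology.) Thus a hypothetical component such as $g_3Q^8g_3$ in $\beta^\flat_*[S^{14}]$ would map to $h(\nu)\cdot Q^8h(\nu)$, with no reason to vanish, so $h(\kappa)$ does \emph{not} depend only on the $\Sigma\Gamma^6(\Sigma^4K)$-component. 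This is exactly the step the paper closes by a separate Steenrod argument on the spherical class itself: since $\beta^\flat_*[S^{14}]$ must be annihilated by all $Sq^t_*$, the computations $Sq^2_*\bigl((Q^3g_3)(Q^5g_3)\bigr)=g_3^4$ and $Sq^2_*\bigl(g_3Q^8g_3\bigr)=g_3Q^6g_3$ rule out any component in $H_{14}(\Gamma^6S^3;\Z/2)$. Your argument becomes complete once you replace the false reduction by this (or an equivalent) analysis of the bottom summand --- and, for full rigour, the square $(Q^4g_3)^2$, which also lives in that degree, should be disposed of as well, e.g.\ by $Sq^2_*(Q^4g_3)^2=(Q^3g_3)^2=g_3^4$ combined with a check that no cancellation among the bottom monomials is possible.
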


\begin{proof}
The Toda bracket representing $\kappa$ is obtained from considering $S^{13}\to\Sigma^4K\to S^3\to S^0$ which yields extension and co-extension maps that fit together to represent $\kappa$ as $S^{14}\to C_\alpha\to S^0$. In order to work in the level of spaces, we need to realise the stable maps as maps of spaces. We proceed as follows. We think of $\nu$ as a genuine map $S^3\lra Q_0S^0$. Also note that $\Sigma^4K\to S^3$ is already in the stable range, so it can be taken as a map of spaces. For $\beta:S^9\to K$, as $K$ has its bottom cell in dimension $0$, we need to suspend $10$ times which yields $S^{19}\to\Sigma^{10}K$. We then think of $\beta$ in terms of its $6$-th adjoint as $S^{13}\to\Omega^6\Sigma^{6}(\Sigma^4K)$. Now, consider the composition
$$S^{13}\stackrel{\beta}{\lra}\Gamma^6\Sigma^4K\stackrel{\Gamma^6\alpha}{\lra}\Gamma^6 S^3\lra Q_0S^0$$
where $\Gamma^6=\Omega^6\Sigma^6$ and the map $\Gamma^6S^3\to Q_0S^0$ is the extension of $S^3\to Q_0S^0$ obtained from the fact that $Q_0S^0$ is already a $6$-fold loop spaces. The successive compositions are trivial. This provides extension and co-extension maps that we may compose as $$S^{14}\stackrel{\beta^\flat}{\lra}C_{\Gamma^6\alpha}\stackrel{\nu_\sharp}{\lra}Q_0S^0$$
which represents $\kappa$ as an element in ${_2\pi_{14}}Q_0S^0$. We claim $\beta^\flat_*=0$, that is there is no spherical class in $H_{14}C_{\Gamma^6\alpha}$. If $h(\beta^\flat)\neq 0$ then $Sq^t_*h(\beta^\flat)=0$ for all $t>0$. Applying the above lemma, we compute that $H_{14}(C_{\Gamma^6\alpha};\Z/2)$ has generators
$$g_3^2(Q^5g_3)=(Q^3g_3)(Q^5g_3),\ g_3Q^8g_3,\ \Sigma_*((\Sigma^3a_1)^2(\Sigma^3a_2)),\ \Sigma_*((Q^5\Sigma^3a_1)(\Sigma^3a_1)),\ \Sigma_*Q^8\Sigma^3a_2.$$
Hence, if $h(\beta^\flat)\neq 0$ then it could be written as
$$\epsilon_1(Q^3g_3)(Q^5g_3)+\epsilon_2g_3Q^8g_3
               +\epsilon_3\Sigma_*((\Sigma^3a_1)^2(\Sigma^3a_2))+\epsilon_4\Sigma_*((Q^5\Sigma^3a_1)(\Sigma^3a_1))+\epsilon_5\Sigma_*Q^8\Sigma^3a_2$$
with $\epsilon_i\in\Z/2$ and at least one of then is nonzero. First, consider the component coming from $H_*\Gamma^6S^3$. Since,
$$Sq^2_*(Q^3g_3)(Q^5g_3)=(Q^3g_3)^2=g_3^4,\ Sq^2_*g_3Q^8g_3=g_3Q^6g_3$$
we conclude that it is impossible to have $\epsilon_1+\epsilon_2=1$ in $\Z/2$ as if one of $\epsilon_1$ or $\epsilon_2$ is nontrivial, then the component of $h(\beta^\flat)$ in $H_*\Gamma^6S^3$ is not annihilated by $Sq^2_*$, so $h(\beta^\flat)$ is not annihilated by $Sq^2_*$ which is a contradiction. Moreover, if $\epsilon_1=\epsilon_2=1$ then
$$Sq^2_*((Q^3g_3)(Q^5g_3)+g_3Q^8g_3)=g_3^4+g_3Q^6g_3\neq 0.$$
This shows that if $h(\beta^\flat)\neq 0$, then it cannot have any nontrivial component in $H_*\Gamma^6S^3$. Therefore, if $h(\beta^\flat)\neq0$ then it has to project nontrivially onto $H_*(\Sigma\Gamma^6(\Sigma^4K);\Z/2)$ under the pinch map, i.e.
$$h(p\circ\beta^\flat)=\epsilon_3\Sigma_*((\Sigma^3a_1)^2(\Sigma^3a_2))+\epsilon_4\Sigma_*((Q^5\Sigma^3a_1)(\Sigma^3a_1))+\epsilon_5\Sigma_*Q^8\Sigma^3a_2\neq 0$$
where $p:C_{\Gamma^6\alpha}\to\Sigma\Gamma^6(\Sigma^4K)$ is the pinch map. On the other hand, by the construction of the Toda bracekt, $p\circ\beta^\flat=\Sigma\beta$, hence
$$h(\Sigma\beta)=\epsilon_3\Sigma_*((\Sigma^3a_1)^2(\Sigma^3a_2))+\epsilon_4\Sigma_*((Q^5\Sigma^3a_1)(\Sigma^3a_1))+\epsilon_5\Sigma_*Q^8\Sigma^3a_2$$
which implies that
$$h(\beta)=\epsilon_3(\Sigma^3a_1)^2(\Sigma^3a_2)+\epsilon_4(Q^5\Sigma^3a_1)(\Sigma^3a_1)+\epsilon_5Q^8\Sigma^3a_2\in H_{13}(\Gamma^6(\Sigma^4K);\Z/2)$$
has to be nontrivial. Note that the first two terms in the above sum, if nontrivial, are decomposable classes, so by Cartan formulra for $Sq^t_*$ operations, map to decomposable class under $Sq^t_*$. Now, if $\epsilon_5\neq 0$ then we compute that
$$Sq^1_*h(\beta)=Q^7\Sigma^3a_2+\textrm{ decomposable terms}\neq 0$$
which is a contradiction, hence $\epsilon_5=0$. Therefore, $h(\beta)$ consists of decomposable classes in the Hopf algebra $H_*(\Gamma^6(\Sigma^4K);\Z/2)$. Since $h(\beta)$ is also primitive, hence by \cite[Proposition 4.21]{MM} it has to be a square. But, it is obvious non of the first two terms in $h(\beta)$ neither their sum is a square. This is a contradiction as it shows that $h(\beta)=0$ which contradicts the requirement that $h(p\circ\beta^\flat)=h(\Sigma\beta)\neq 0$. Therefore, $h(\beta^\flat)=0$. Moreover, our computations do not depend on a specific choice of $\beta^\flat$, hence the indeterminacy in choosing $\beta^\flat$ is irrelevant here, therefore, $h(\kappa)=0$.
\end{proof}

Let us note that $\kappa$ can also be represented by a higher Toda bracket which consists only of the Hopf invariant one maps and $2$, namely $\{\nu,\overline{\nu},2,\eta\}=\{\nu,\{\nu,\eta,\nu\},2,\eta\}$. The above proof then shows that the Hurewicz image of the element represented by this Toda bracket is trivial, a direct proof is however more delicate and involves too much detailed computations.

\section{Proof of Lemma \ref{main1}}
By \cite[Theorem 1]{Za} Curtis conjecture holds on ${_2\pi_*}J$. Assuming that the conjecture also holds on $F^2$, and that $h(\tau_i)=0$, the theorem follows upon applying Theorem \ref{ideal}. Note that the Adams filtration of the elements in ${_2\pi_*}J$, consequently the elements of $\la{_2\pi_*}J\ra$, is not bounded \cite[Theorem 4.2]{Ma1}. Since the Theorem holds on $\la{_2\pi_*}J\ra$ then we see that the Adams filtration of the elements on which the conjecture hold is not bounded.\\
Hence, in order to complete the proof we have to verify the conjecture on $F^2$; we postpone computation of $h(\tau_i)$ to another section. The elements of $F^2$ are known: the Hopf invariant one elements $\eta,\nu,\sigma$; the Kervaire invariant one elements $\theta_i$ with $i\leqslant 6$; elements of Mahowald's family $\mu_i$; and the element $\nu^*\in{_2\pi_{18}^s}$. Note that in the tables of \cite[Appendix 3]{Ravenel-Greenbook} there is also the element $\eta^*\in{_2\pi_{16}^s}$ represented by $\{\sigma,2\sigma,\eta\}$; this is just the element $\eta_4$ of Mahowald's family the is been considered below. The conjecture then is verified on $F^2$ if we show that $h(\mu_i)=0$ and $h(\nu^*)=0$. First we deal with $h(\mu_i)$ which is straightforward.

\begin{lmm}\label{eta}
Let $\eta_i\in{_2\pi_{2^i}}Q_0S^0$, $i\geqslant 3$, denote Maholwad's family. Then $h(\eta_i)=0$ where
$h:{_2\pi_{2^i}}Q_0S^0\lra H_{2^i}Q_0S^0$ is the Hurewicz homomorphism.
\end{lmm}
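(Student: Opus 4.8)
The plan is to imitate the computation of $h(\overline{\nu})$ in Lemma \ref{h(nu-bar)}, where $\overline{\nu}=\eta_3$ was killed because a coextension landed in a complex having no cell in the relevant top dimension. Recall that $\eta_i\in{_2\pi_{2^i}^s}$ is detected by $h_1h_i$ on the $2$--line of the Adams spectral sequence, so it has Adams filtration exactly $2$, and the case $i=3$ is precisely Lemma \ref{h(nu-bar)}. Were $h(\eta_i)\neq 0$, it would be a primitive, $Sq^t_*$--annihilated spherical class in degree $2^i$, which is neither a Hopf invariant one degree $2^j-1$ nor a Kervaire degree $2^{j+1}-2$ (for parity reasons $2^i$ is never of either form); the whole task is thus to exhibit a geometric reason for its vanishing, and by Theorem \ref{main0} this reason cannot be that $\eta_i$ is a product, since for $i\geqslant 4$ filtration considerations show $\eta_i$ is indecomposable.

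The strategy I would follow is to represent $\eta_i$ as a triple Toda bracket $\{\alpha,\beta,\gamma\}$ whose middle term is short, passing through the mod $2$ Moore complex $K=S^0\cup_2 e^1$ together with its iterated Adams--Mahowald self map; this is the kind of construction that is available for all $i\geqslant 3$, since for large $i$ there are no Hopf invariant one nor Kervaire invariant one elements in degrees $2^i-1,\,2^i-2$ out of which to build $\eta_i$ as an honest composite. Given such a representation, I would invoke the factorisation of the stable adjoint of a Toda bracket from Subsection \ref{adjointtoda}, which writes the Hurewicz image as
$$h(\eta_i)=(\gamma_\sharp)_*\circ(\alpha^\flat)_*,$$
where $\alpha^\flat:S^{2^i}\to C_\beta$ is the coextension into the cone of the middle map. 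If the bracket can be realised by maps of finite complexes, then $C_\beta$ is a suspension of $K$ (or of $C_\eta$), with reduced homology concentrated in two adjacent low degrees lying well below $2^i$; then $(\alpha^\flat)_*=0$ for dimensional reasons and $h(\eta_i)=0$, exactly as in Lemma \ref{h(nu-bar)}, with the indeterminacy in $\alpha^\flat$ irrelevant because the vanishing is dimensional.

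The main obstacle is not the homological vanishing, which is immediate once the bracket is in place, but producing that bracket uniformly in $i$: I must recall Mahowald's construction of $\eta_i$ through $K$ and verify that the chosen $\{\alpha,\beta,\gamma\}$, with its short middle term, is genuinely detected by $h_1h_i$ and so represents $\eta_i$ rather than a decomposable or a zero class. A second delicacy, already visible in Lemma \ref{h(kappa)}, is that realising the relevant stable maps at the space level may force loopings $\Omega^n\Sigma^n$, after which $C_\beta$ acquires Dyer--Lashof generators $Q^I$ on the two cells of $K$ and can carry classes up in dimension $2^i$; if that happens the clean dimensional argument fails and one must instead rule out a nonzero $h(\eta_i)$ by the primitivity together with the $Sq^t_*$/Nishida/Cartan computation used for $\kappa$ in Lemma \ref{h(kappa)}. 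I expect the bulk of the work to be checking that, after the minimal necessary looping, the only candidate indecomposable generators of $H_{2^i}(C_\beta;\Z/2)$ are either carried off by some $Sq^t_*$ or fail to be primitive, so that no spherical class of dimension $2^i$ survives.
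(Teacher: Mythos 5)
There is a genuine gap, and it sits at the very first step of your plan: the uniform triple Toda bracket representation of $\eta_i$ through the Moore complex $K=S^0\cup_2e^1$, on which everything downstream depends, is never produced, and no such representation valid for all $i\geqslant 3$ is available. Mahowald's family is not constructed as a bracket through $K$ and a self map; it is constructed as an honest composite of stable maps $S^{2^i}\stackrel{f_i}{\lra}X_i\stackrel{g_i}{\lra}S^0$, where $X_i=F(\R^2,2^i-3)\ltimes_{\Sigma_{2^i-3}}(S^7)^{\wedge(2^i-3)}$ is built from configuration spaces and copies of $S^7$. Only sporadic bracket descriptions are known (e.g.\ $\eta_4=\{\sigma,2\sigma,\eta\}$, quoted in the paper just before this lemma), so your assertion that ``this kind of construction is available for all $i\geqslant 3$'' is exactly the unverified input. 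Moreover, your own fallback concedes the remaining weakness: once space-level realisation forces loopings $\Omega^n\Sigma^n$, the cone $C_\beta$ acquires Dyer--Lashof classes $Q^I$ in the critical dimension $2^i$, the ``immediate'' dimensional vanishing evaporates, and one is left with a Nishida-relation computation in the style of Lemma \ref{h(kappa)} that you explicitly do not carry out, and which would have to be done uniformly in $i$. As written, the proposal is a programme whose two essential inputs --- the bracket and the computation --- are both missing.

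The paper's actual proof shows that the dimensional mechanism you are aiming for is already present in Mahowald's construction, with no bracket and no Steenrod-algebra work at all: $X_i$ has bottom cell in dimension $2^i-2^{i-3}=7\cdot 2^{i-3}$ and top cell in dimension strictly less than $2^i$, so $f_i$ may be taken to be a genuine map of complexes (the stable range condition $2^i\leqslant 7\cdot 2^{i-2}-2$ holds for all $i\geqslant 3$), and the stable adjoint of $\eta_i$ is the composite $S^{2^i}\stackrel{f_i}{\lra}X_i\stackrel{g_i}{\lra}Q_0S^0$ with $(f_i)_*=0$ because $H_{2^i}(X_i;\Z/2)=0$. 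Hence $h(\eta_i)=0$, with no coextension, no indeterminacy, and no looping correction ever entering, since the high connectivity of $X_i$ keeps the whole composite in the stable range. To salvage your write-up, replace the hypothesised bracket by Mahowald's complex $X_i$; your dimensional argument then goes through verbatim, and both of the delicacies you flagged disappear.
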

\begin{proof}
Mahowald's $\eta_i$ family, living in ${_2\pi_{2^i}^s}$ for $i\geqslant 3$, is constructed as a composite of maps \cite{Ma}
$S^{2^i}\stackrel{f_i}{\longrightarrow} X_i\stackrel{g_i}{\longrightarrow} S^0$ where $X_i:=F(\R^2,2^i-3)\ltimes_{\Sigma_{2^i-3}}(S^7)^{\wedge(2^i-3)}$, with $F(\R^2,2^i-3)$ being the configuration space of $2^i-3$ distinct points in $\R^2$, has its bottom cell is in dimension $2^i-2^{i-3}=2^{i-3}7$ and top cell in a dimension less than $2^i$. Therefore, the complex $X_i$ is a genuine complex and the mapping $f_i$ can be taken as a genuine mapping. The stable adjoint of $\eta_i$ then is given by
$S^{2^i}\stackrel{f_i}{\longrightarrow} X_i\stackrel{g_i}{\longrightarrow} Q_0S^0$
where ${f_i}_*=0$ for dimensional reasons. Hence, ${\eta_i}_*=0$.
\end{proof}

\subsection{Computing $h(\nu^*)$}

We begin with a computational result.

\begin{lmm}\label{sigmabar}
Consider $\sigma,2\sigma\in{_2\pi_7^s}$ and let $\overline{\sigma}:C_{\sigma}\lra QS^3$ be an extension of $2\sigma$ implied by $(2\sigma)\circ\sigma=0$. Then $\overline{\sigma}_*=0$.
\end{lmm}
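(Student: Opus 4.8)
The plan is to compute the homology of the mapping cone $C_\sigma$ explicitly, identify the class $\overline\sigma$ would have to hit, and then show that no spherical class is available in the relevant degree. First I would recall that $\sigma \in {_2\pi_7^s}$ gives a stable cofibration $S^7 \xrightarrow{\sigma} S^0 \to C_\sigma$, so that $\widetilde H_*(C_\sigma;\Z/2)$ has $\Z/2$-generators $\iota_0$ in degree $0$ and $\iota_8$ in degree $8$ (the suspended top cell), with the Steenrod action recording $\sigma$: namely $Sq^8_*\iota_8 = \iota_0$, reflecting the nontriviality of $\sigma$ detected by $Sq^8$. The extension $\overline\sigma : C_\sigma \to QS^3$ of $2\sigma$ is a map from an $8$-dimensional complex into $QS^3$, so its homological behaviour is governed entirely by what it can do on the two cells $\iota_0, \iota_8$.

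Next I would observe that $\overline\sigma$ factors the stabilised $2\sigma$ in the sense that restricting along the bottom cell recovers (the adjoint of) $2\sigma$. The key point is that $2\sigma$ is divisible by $2$, and more importantly that the stable Hurewicz image of $2\sigma \in \pi_7^s = \pi_7 QS^0$ already vanishes mod $2$: indeed $2\sigma$ is an even multiple, and $h$ is a $\Z$-module map, so its mod $2$ Hurewicz image is zero. Translating to the extension picture, the composite $S^8 \to C_\sigma \xrightarrow{\overline\sigma} QS^3$ detecting $\overline\sigma$ on the top cell would, if nonzero in homology, have to land on an admissible monomial $Q^I g_3$ of dimension $8$ with positive excess in $H_*(QS^3;\Z/2) \simeq \Z/2[Q^I g_3 : \ex(Q^I g_3) > 0]$, where $g_3$ is the fundamental class. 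The only candidate of the right dimension is $Q^5 g_3$ (since $Q^4 g_3 = g_3^2$ has excess $0$ and is decomposable), so I would argue that $(\overline\sigma)_*\iota_8 = \epsilon\, Q^5 g_3$ for some $\epsilon \in \Z/2$.

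I would then rule out $\epsilon = 1$ using the Nishida relations together with the $Sq^8$ relation in $C_\sigma$. Applying $Sq^t_*$ to $(\overline\sigma)_*\iota_8 = \epsilon\, Q^5 g_3$ and using naturality, any Steenrod operation detecting $\sigma$ on $C_\sigma$ must be compatible with the Nishida relations computing $Sq^t_* Q^5 g_3$ in $H_*(QS^3;\Z/2)$. In particular $Sq^8_*\iota_8 = \iota_0$ maps to $(\overline\sigma)_* \iota_0$, which lies in degree $0$, whereas $Sq^8_* Q^5 g_3$ computed via the Nishida relations lands in positive-dimensional classes (one checks $Sq^8_* Q^5 g_3$ is a combination of $Q^\bullet g_3$ and $Q^\bullet Sq^\bullet_* g_3$ terms, none of which is the image of $\iota_0$ since $\overline\sigma$ sends $\iota_0$ into $H_0$). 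This incompatibility forces $\epsilon = 0$, hence $(\overline\sigma)_*\iota_8 = 0$, and since $(\overline\sigma)_*\iota_0 = 0$ for dimension/connectivity reasons as well, we get $\overline\sigma_* = 0$.

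The main obstacle I anticipate is bookkeeping the Nishida relations precisely enough to see the contradiction: I must verify that the single available target class $Q^5 g_3$ genuinely fails to support the $Sq^8$-relation inherited from $C_\sigma$, rather than merely being plausible. A subtlety is the indeterminacy in the choice of extension $\overline\sigma$ (the coextension is defined only up to a map $S^8 \to QS^3$ factoring through the bottom cell), so I would need to check that the homological vanishing is independent of this choice — which should hold because any two choices differ by something pulled back from $\widetilde H_*(S^0)$, contributing only in degree $0$ and leaving the degree-$8$ computation untouched. I expect the argument to be short once the two-cell structure of $C_\sigma$ and the single candidate $Q^5 g_3$ are pinned down.
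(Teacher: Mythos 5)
Your setup is dimensionally inconsistent, and the miscounting hides the entire difficulty of the lemma. The extension $\overline{\sigma}$ is of the stable \emph{adjoint} of $2\sigma$, which in the paper's normalization is a map $S^{10}\to QS^3$ (since $2\sigma\in{_2\pi_7^s}\simeq{_2\pi_{10}^s}S^3$), extended over the cone of $\sigma:S^{17}\to S^{10}$; thus $C_\sigma$ has cells in dimensions $10$ and $18$, and the class to be computed is $\overline{\sigma}_*g_{18}\in H_{18}(QS^3;\Z/2)$. Your complex $S^0\cup_\sigma e^8$ admits no map to $QS^3$ restricting to $2\sigma$ on the bottom cell — a map $S^0\to QS^3$ is not an adjoint of $2\sigma$ in any normalization — so the top-cell computation belongs in degree $18$, not $8$. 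This shift is not cosmetic: in degree $8$ there is the single candidate $Q^5g_3$, whereas $H_{18}(QS^3;\Z/2)$ contains the admissible monomials $Q^{15}g_3$, $Q^{10}Q^5g_3$, $Q^9Q^6g_3=(Q^6g_3)^2$ \emph{plus} decomposable summands. The paper must first dispose of the decomposables (desuspend once, note that the image of $g_{17}$ is primitive in the odd-dimensional group $H_{17}(QS^2;\Z/2)$, and invoke Milnor--Moore: a primitive decomposable is a square, impossible in odd dimension), then use Nishida relations with $Sq^1_*$ and $Sq^2_*$, together with the absence of cells of $C_\sigma$ in dimensions $17$ and $16$, to kill $Q^{10}Q^5g_3$ and $(Q^6g_3)^2$.

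Even inside your own indexing, the claimed contradiction is vacuous: by naturality $Sq^8_*\overline{\sigma}_*\iota_8=\overline{\sigma}_*Sq^8_*\iota_8=\overline{\sigma}_*\iota_0$, and \emph{both} sides lie in $\widetilde{H}_0(QS^3;\Z/2)=0$, so the relation $Sq^8_*\iota_8=\iota_0$ imposes no constraint on your $\epsilon$; the operation that would actually work there is $Sq^2_*$, since $Sq^2_*Q^5g_3=\binom{3}{2}Q^3g_3=g_3^2\neq 0$ while the cell structure forces $Sq^2_*\overline{\sigma}_*\iota_8=0$. More seriously, no Steenrod-operation argument of this kind can complete the proof in the correct degree: from the Nishida relations one has $Sq^a_*Q^{15}g_3=\binom{15-a}{a}Q^{15-a}g_3$, and $\binom{15-a}{a}\equiv 0 \pmod 2$ for all $a$ with $0<a\leqslant 8$, so the class $Q^{15}g_3$ is compatible with every primary operation on the two-cell complex. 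To eliminate it the paper pulls $2\sigma$ back unstably, using that $2\sigma$ desuspends to $\pi_{15}S^8$, to factor $\overline{\sigma}$ through $\Omega^5S^8\to QS^3$, and then observes that $Q^{15}g_3=Q_{12}g_3$ cannot exist in $H_*(\Omega^5S^8;\Z/2)$ because the lower-indexed operation $Q_{12}$ requires at least a $13$-fold loop space. That unstable factorization is the heart of the lemma, and it is entirely absent from your proposal.
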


\begin{proof}
We want to show that for $\sigma:S^{17}\lra S^{10}$ and $2\sigma:S^{10}\lra QS^3$, then $\overline{\sigma}_*=0$. Since $\overline{\sigma}$ extends $2\sigma$, hence its homology on the bottom cell is determined by $2\sigma$ which is trivial in $\Z/2$-homology. For the top cell, $\overline{\sigma}_*g_{18}\in H_{18}QS^3$ which we compute as follows. First, note that the map $\sigma:S^{17}\to S^{10}$ desuspend, at least once, to a map $\sigma_{-1}:S^{16}\to S^9$, hence $C_\sigma\simeq\Sigma C_{\sigma_{-1}}$. Taking adjoint of the (null) composition $S^{17}\to S^{10}\to QS^3$ we obtain $S^{16}\to S^9\to QS^2$ which provides a map $\overline{\sigma}_{-1}:C_{\sigma_{-1}}\to QS^2$ such that
$$
\xymatrix{
\Sigma C_{\sigma_{-1}}\ar[r]^-{\Sigma\overline{\sigma}_{-1}}\ar[d]_-{\simeq} & \Sigma QS^2\ar[d]^-{e}\\
C_\sigma\ar[r]^-{\overline{\sigma}}                                          & QS^3}
$$
commutes, where $e$ is the evaluation map. This implies that $\overline{\sigma}_*g_{18}=e_*{\overline{\sigma}_{-1}}_*g_{17}$. Note that $e_*$ is just homology suspension $H_*QS^2\to H_{*+1}QS^3$. Note that $C_{\sigma_{-1}}$ also desuspends, at least once. Hence, $H^*C_{\sigma_{-1}}$ has trivial ring structure and $H_*C_{\sigma_{-1}}$ is primitively generated; in particular $g_{17}$ is primitive, so ${\overline{\sigma}_{-1}}_*g_{17}\in H_{17}QS^2$ is primitive. Now, write $${\overline{\sigma}_{-1}}_*g_{17}=\int Q^Ig_{2}+D$$
where the sum could be empty and $D$ denotes a sum of decomposable terms. Since each term $Q^Ig_{17}$ is primitive, hence $D$ is also a primitive class which by \cite[Proposition 4.23]{MM}, working at $p=2$, $D$ has to be square term but lives in odd dimensions, so $D=0$. Therefore, we may write
$$\overline{\sigma}_*g_{18}=e_*(\int Q^Ig_{2})=\int Q^Ig_3.$$
The only nonzero admissible monomials $Q^Ig_3$ in $H_{18}QS^3$ are
$$Q^{15}g_3,Q^{10}Q^5g_3,Q^9Q^6g_3=(Q^6g_3)^2.$$
Hence, if $\overline{\sigma}_*g_{18}\neq 0$ then
$$\overline{\sigma}_*g_{18}=\epsilon_1Q^{15}g_3+\epsilon_2Q^{10}Q^5g_3+\epsilon_3Q^9Q^6g_3$$
with $\epsilon_i\in\Z/2$ such that at least one of them is nonzero. By Nishida relations $Sq^1_*Q^{2t}=Q^{2t-1}$ and $Sq^1_*Q^{2t+1}=0$ we compute that
$$Sq^1_*\overline{\sigma}_*g_{18}=\epsilon_2Q^{9}Q^5g_3$$
which, if $\epsilon_2=1$, then it contradicts the fact that $Sq^1_*\overline{\sigma}_*g_{18}=\overline{\sigma}_*Sq^1_*g_{18}=0$ as $C_{\sigma}$ has cells only in dimensions $10$ and $18$. Hence, $\overline{\sigma}_*g_{18}=\epsilon_1Q^{15}g_3+\epsilon_3Q^9Q^6g_3$. Now, applying $Sq^2_*$, we obtain
$$Sq^2_*\overline{\sigma}_*g_{18}=\epsilon_3Q^8Q^5g_3=\epsilon_3(Q^5g_3)^2.$$
Hence, if $\epsilon_3=1$ then we run into contradiction as by a similar reasoning, since $C_\sigma$ has no cells in dimension $16$ we must have $Sq^2_*\overline{\sigma}_*g_{18}=0$. Consequently, the only remaining possibility is
$$\overline{\sigma}_*g_{18}=Q^{15}g_3.$$
In order to eliminate this last possibility, we proceed with some unstable computations. The element $2\sigma\in\pi_7^s\simeq\pi_{10}QS^3$ pulls back to $\pi_{15}S^8$, hence $2\sigma:S^{10}\lra QS^3$ pulls back to a map $S^{10}\lra\Omega^5S^8$. The composition $S^{17}\lra S^{10}\lra \Omega^5S^8$ is trivial which provides a map $\overline{\sigma}_1:C_\sigma\lra \Omega^5S^8$, providing a factorisation of $\overline{\sigma}$ as $C_\sigma\lra \Omega^5S^8\lra QS^3$. Hence, if $\overline{\sigma}_*g_{18}\neq 0$ implies that $(\overline{\sigma_1})_*g_{18}\neq 0$. Now, we may list the elements of $H_{18}(\Omega^5S^8;\Z/2)$ which are
$$Q^{10}Q^5g_3,Q^9Q^6g_3=(Q^6g_3)^2$$
and can contribute to terms in $(\overline{\sigma_1})_*g_{18}$. But, these are eliminated as similar to above. Moreover, the class $Q^{15}g_3=Q_{12}g_3$ cannot live in $H_*(\Omega^5S^8;\Z/2)$ as we need at least 13 loops to have $Q_{12}$. Therefore, we cannot have $\overline{\sigma}_*g_{18}\neq 0$. This completes the proof.
\end{proof}

Now, we may complete computation of $h(\nu^*)$.

\begin{thm}\label{nu^*}
Let $\nu^*\in{_2\pi_{18}^s}$ be the element determined by Toda bracket $\{\sigma,2\sigma,\nu\}$. Then it maps trivially under $h:{_2\pi_{18}}Q_0S^0\lra H_{18}(Q_0S^0;\Z/2)$.
\end{thm}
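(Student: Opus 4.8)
The plan is to mimic the structure of the proofs of Lemmas \ref{h(nu-bar)} and \ref{h(kappa)}: realise $\nu^*=\{\sigma,2\sigma,\nu\}$ as a composite of genuine maps of spaces passing through a mapping cone, and then show that the relevant attaching/extension map is trivial in homology. Concretely, I would start from the (null) composite $S^{17}\stackrel{\sigma}{\lra}S^{10}\stackrel{2\sigma}{\lra}QS^3\stackrel{\nu}{\lra}Q_0S^0$, where $\nu:S^3\to Q_0S^0$ is taken as a genuine map and $QS^3\to Q_0S^0$ is its infinite-loop extension. This produces a coextension $\sigma^\flat:S^{18}\to C_\sigma$ and an extension $\overline{\sigma}:C_\sigma\to QS^3$ of $2\sigma$, which compose with $\nu$ to represent $\nu^*$ as
$$S^{18}\stackrel{\sigma^\flat}{\lra}C_\sigma\stackrel{\overline{\sigma}}{\lra}QS^3\stackrel{\nu}{\lra}Q_0S^0.$$

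**Reducing to the homology of $\overline{\sigma}$.**
The key reduction is that $h(\nu^*)=0$ will follow if I can show the composite $\overline{\sigma}\circ\sigma^\flat:S^{18}\to QS^3$ is trivial in homology, since $\nu_*$ applied to a trivial class is trivial. Now $\sigma^\flat$ hits the top cell of $C_\sigma$, so $(\overline{\sigma}\circ\sigma^\flat)_*g_{18}=\overline{\sigma}_*g_{18}$ (up to the contribution on the bottom cell, which lands in the image of $2\sigma$ and is already zero in $\Z/2$-homology). But Lemma \ref{sigmabar} is exactly the statement that $\overline{\sigma}_*=0$. So the substantive computation has already been isolated and dispatched in the preceding lemma, and this theorem is meant to be the short payoff.

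**The indeterminacy and the role of $\nu$.**
The main thing I would need to verify carefully is that the indeterminacy in the Toda bracket does not spoil the conclusion. The coextension $\sigma^\flat$ and the extension $\overline{\sigma}$ are each only well defined up to choices, and different choices shift $\nu^*$ by the indeterminacy $\nu\circ\pi_{18}^s\cdot\sigma^2$-type terms; I would argue, as in Lemma \ref{h(kappa)}, that since Lemma \ref{sigmabar} shows $\overline{\sigma}_*=0$ for \emph{any} admissible choice of the extension of $2\sigma$, the vanishing is insensitive to these choices. One subtlety worth a sentence is making sure that passing $\nu$ through as an infinite-loop map $QS^3\to Q_0S^0$ is legitimate: this is exactly the adjunction discussion in Subsection \ref{iteratedloop}, so $\nu_*$ on $H_*QS^3$ is controlled and in particular sends the zero class to zero.

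**Where the difficulty really lies.**
I expect essentially all the difficulty to have been pushed into Lemma \ref{sigmabar}, whose proof runs through the stable/unstable homology-suspension argument eliminating $Q^{15}g_3$, $Q^{10}Q^5g_3$, and $(Q^6g_3)^2$ one by one. Granting that lemma, the present theorem is close to a one-line deduction; the only care needed is the bookkeeping of the bottom-cell contribution and the indeterminacy. Thus my proof would be short: assemble the composite, invoke Lemma \ref{sigmabar} to conclude $\overline{\sigma}_*g_{18}=0$, note the bottom-cell term is killed by $2\sigma$, and remark that independence from the choices of extension/coextension makes the indeterminacy irrelevant, so $h(\nu^*)=0$.
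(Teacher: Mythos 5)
Your overall strategy coincides with the paper's: everything is reduced to Lemma \ref{sigmabar}, the indeterminacy is dismissed because that lemma holds for any admissible choice of the extension of $2\sigma$, and the infinite-loop extension of $\nu$ is handled by the adjunction discussion. However, one step as written is wrong: there is no coextension $\sigma^\flat:S^{18}\to C_\sigma$, so your composite $S^{18}\stackrel{\sigma^\flat}{\lra}C_\sigma\stackrel{\overline{\sigma}}{\lra}QS^3\stackrel{\nu}{\lra}Q_0S^0$ does not represent the bracket. The coextension produced by the nullhomotopy of $(2\sigma)\circ\sigma$ lands in the cone of the \emph{middle} map, $\sigma^\flat:S^{18}\to C_{2\sigma}$, and the bracket is represented by $\nu_\natural\circ\sigma^\flat$ where $\nu_\natural:C_{2\sigma}\to Q_0S^0$ extends $\nu$. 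A map $S^{18}\to C_\sigma$ ``hitting the top cell'' would be a section of the pinch map $p:C_\sigma\to S^{18}$, and no such section exists, even stably: the connecting map of the cofibration $S^{10}\to C_\sigma\stackrel{p}{\lra}S^{18}$ is $\Sigma\sigma\neq 0$, equivalently $Sq^8$ acts nontrivially in $H^*(C_\sigma;\Z/2)$, so the cofibration does not split.

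The gap is repairable exactly along the paper's lines, after which your reduction to Lemma \ref{sigmabar} goes through: one has $\sigma^\flat\circ p\simeq\iota_{2\sigma}\circ\overline{\sigma}$ as maps $C_\sigma\to C_{2\sigma}$, whence $\nu_\natural\circ\sigma^\flat\circ p\simeq\nu\circ\overline{\sigma}$. Since $\sigma_*=0$, the pinch map is surjective in homology in degree $18$; choosing $\widetilde{g}_{18}\in H_{18}(C_\sigma;\Z/2)$ with $p_*\widetilde{g}_{18}=g_{18}$ gives
$$h(\nu^*)=(\nu_\natural\circ\sigma^\flat)_*g_{18}=(\nu_\natural\circ\sigma^\flat\circ p)_*\widetilde{g}_{18}=\nu_*\overline{\sigma}_*\widetilde{g}_{18}=0$$
by Lemma \ref{sigmabar}. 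Note that in this formulation the bottom-cell bookkeeping you worry about never arises --- only the degree-$18$ statement is used --- and, as you say, since the identity above holds for every choice of $\sigma^\flat$ and $\nu_\natural$ and Lemma \ref{sigmabar} is insensitive to the choice of $\overline{\sigma}$, the indeterminacy of $\{\sigma,2\sigma,\nu\}$ is irrelevant. So: right key lemma, right reduction, correct assessment that all the work sits in Lemma \ref{sigmabar}, but the cone through which the bracket factors must be $C_{2\sigma}$, not $C_\sigma$; the passage through $C_\sigma$ happens only in homology, via $p_*$.
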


\begin{proof}
Consider the stable adjoint of $2\sigma$ and the infinite loop extension of the stable adjoint of $\nu$ as
$$2\sigma:S^{10}\lra QS^3,\ \nu:QS^3\lra Q_0S^0.$$
For the composition $S^{17}\stackrel{\sigma}{\lra} S^{10}\stackrel{2\sigma}{\lra} S^3\stackrel{\nu}{\lra} S^0$, the map $S^{17}\lra S^{10}$ is in the stable range, hence the stable adjoint of this composition is the top row in the following diagram
$$\xymatrix{
S^{17}\ar[r]^-{\sigma} &  S^{10}\ar[r]^-{2\sigma}\ar[d]                   & QS^3\ar[r]^-{\nu}\ar[d] & Q_0S^0\\
                       &  C_{\sigma}\ar[ru]_-{\overline{\sigma}}\ar[d]_-p & C_{2\sigma}\ar[ru]_-{\nu_\natural}\\
                       &  S^{18}\ar[ru]_-{\sigma^\flat}}$$
where the successive compositions $(2\sigma)\circ\sigma$ and $\nu\circ(2\sigma)$ are trivial, and give rise to the extension and coextension maps $\sigma^\flat,\nu_\natural$ whose composition is ought to realise $\nu^*$, i.e.
$$S^{18}\stackrel{\sigma^\flat}{\lra}C_{2\sigma}\stackrel{\nu_\natural}{\lra}Q_0S^0$$
represents the Toda bracket $\{\sigma,2\sigma,\nu\}$ for $\nu^*$. Since the map $\sigma:S^{17}\to S^{10}$ has trivial homology, therefore the generator $g_{18}\in H_{18}S^{18}$ is in the image of the pinch map $C_{\sigma}\to S^{18}$. Therefore, by the previous lemma we have
$$(\nu_\natural\circ\sigma^\flat)_*g_{18}=(\nu_\natural\circ\sigma^\flat\circ p)_*g_{18}=\nu_*\overline{\sigma}_*g_{18}=0.$$
We do not rely on specific choices of $\sigma^\flat$ or $\nu_\natural$. Therefore, $h(\nu^*)=0$. This completes the proof.
\end{proof}



\section{Hurewicz homomorphism and Toda brackets}\label{Todasection}
Examples such as computing $h(\kappa)$ and $h(\overline{\nu})$ show that computing Hurewicz image of an element represented by a Toda bracket could be very tedious and involved. However, in some cases, it is possible to use dimensional arguments. We provide two partial results in direction. We first prove an integral version.
\begin{thm}\label{Todavanish}
(i) Let $\alpha\in\pi_i^s$, $\beta\in\pi_j^s$, and $\gamma\in\pi_k^s$ with $i<j+k$ and $j<k$ so that the Toda bracket $\{\alpha,\beta,\gamma\}$ is defined. Then the element of $\pi_{i+j+k+1}^s$ represented by Toda bracket $\{\alpha,\beta,\gamma\}$ maps trivially under the Hurewicz homomorphism $h:\pi_*Q_0S^0\to H_*(Q_0S^0;\Z)$.\\
(ii) Suppose $\alpha\in\pi_i^sX_1$. Let $\beta:X_1\to X_2$, $\gamma:X_2\to S^0$ be stable maps, with $X_1,X_2$ being (stable) complexes, $i<2\mathrm{conn}(X_1)$, $\dim X_1<2\mathrm{conn}(X_2)$, $i>\dim X_1>\dim X_2$, so that the Toda bracket $\{\alpha,\beta,\gamma\}$ is defined. Here, $\mathrm{conn}(X)=n$ if $\pi_t^sX\simeq 0$ for all $t<n$. Then, the element of $\pi_{i+1}Q_0S^0$ represented by Toda bracket $\{\alpha,\beta,\gamma\}$ maps trivially under the Hurewicz homomorphism $h:\pi_{i+1}Q_0S^0\to H_{i+1}(Q_0S^0;\Z)$.
\end{thm}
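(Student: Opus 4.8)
The plan is to reduce both parts to a homology computation of the stable adjoint of the Toda bracket, using the factorisation machinery from Subsection~\ref{adjointtoda} together with the dimensional/connectivity hypotheses to force vanishing. Recall from that subsection that the stable adjoint of $\{\alpha,\beta,\gamma\}$, a map $\Sigma X_1\to QS^0$ (or $S^{i+j+k+1}\to QS^0$ in part~(i)), factors up to the relevant monomorphism through $Q(\gamma_\sharp\circ\alpha^\flat)$, where $\alpha^\flat:\Sigma X_1\to C_\alpha$ is a coextension and $\gamma_\sharp:C_\beta\to S^0$ is an extension. Since the inclusion of a path-connected complex into its $Q$ induces a monomorphism in $\Z/2$-homology, it suffices to show the composite $\gamma_\sharp\circ\alpha^\flat$ (equivalently its $Q$-extension) is trivial in homology; and since the result is integral, I would run the argument with $\Z$ or $\Z/2$ coefficients interchangeably, the dimensional bounds making the choice immaterial.

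For part~(i), the first step is to rewrite $\{\alpha,\beta,\gamma\}$ as a triple bracket of maps among \emph{complexes} rather than spheres, exactly as flagged in the discussion after Proposition~\ref{tripleToda}: thinking of $\alpha:S^{i+j+k}\to S^{j+k}$, $\beta:S^{j+k}\to S^k$, $\gamma:S^k\to S^0$, the coextension $\alpha^\flat:S^{i+j+k+1}\to C_\alpha$ and the extension $\gamma_\sharp:C_\beta\to S^0$ are the operative maps. The cell structure of $C_\alpha$ is concentrated in dimensions $j+k$ and $i+j+k+1$, while $\gamma_\sharp$ originates on $C_\beta$ whose cells sit in dimensions $k$ and $j+k+1$. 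I would then check that the hypotheses $i<j+k$ and $j<k$ place the relevant cells far enough apart that the homology of the composite $\gamma_\sharp\circ\alpha^\flat:S^{i+j+k+1}\to S^0$, or rather the induced map after applying $Q$, must vanish: the point is that a spherical class in $H_*QS^0$ supported by the James--Hopf filtration cannot be hit because the source cell and the target cell are separated in a way that no admissible $Q^I$-operation of the correct excess can bridge. Concretely I would feed the cell dimensions into the description $H_*QS^0\cong\Z/2[Q^I\iota:\ex(Q^I\iota)>0]$ and verify that no generator of the right total degree can be a $Sq^t_*$-annihilated (primitive, hence single-generator by Lemma~\ref{kernelofsuspension}) class compatible with the factorisation.

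For part~(ii), the structure is the same but the bookkeeping is cleaner because it is phrased directly in terms of complexes. The coextension $\alpha^\flat:\Sigma X_1\to C_\alpha$ has source of dimension $i+1$, and the conditions $i<2\operatorname{conn}(X_1)$ and $\dim X_1<2\operatorname{conn}(X_2)$ are exactly the \emph{metastable} bounds guaranteeing that the only homology $QX$ can see of a map out of these complexes lives below the doubling range, so that the height/weight filtration of the Snaith splitting collapses the contribution of $\alpha^\flat$ to its linear (weight-one) part. The chain of inequalities $i>\dim X_1>\dim X_2$ then forces the target cell of $\gamma$ (in $S^0$, hence degree $0$) and the top cell of $\Sigma X_1$ (degree $i+1$) to be incompatible with any weight-one spherical class surviving the homology suspension, so $h$ of the represented class vanishes for dimensional reasons. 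I would make this precise by noting that under these bounds the composite factors through a genuine unstable map, exactly as in the proofs of Lemmas~\ref{h(nu-bar)} and~\ref{sigmabar}, so that $(\alpha^\flat)_*=0$ already at the level of the relevant cell.

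The main obstacle I anticipate is \emph{bookkeeping the indeterminacy together with the excess condition}: a Toda bracket is only defined up to indeterminacy, and I must ensure that every choice of extension/coextension yields a homologically trivial representative, not merely one choice. The natural way to handle this is to observe, as in the $h(\kappa)$ computation, that the indeterminacy lives in dimensions where the dimensional hypotheses already force triviality, so it is irrelevant; but checking that the excess inequality $\ex(Q^I\iota)>0$ combined with the separation of cell dimensions leaves \emph{no} admissible monomial of the correct degree is where the real content of the two sets of inequalities must be extracted, and it is here that I would spend the care rather than on the formal factorisation, which is routine.
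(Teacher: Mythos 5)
Your formal framing (passing to the stable adjoint, factoring through $Q(\gamma_\sharp\circ\alpha^\flat)$, and discharging the indeterminacy by making the vanishing argument choice-independent) is consistent with the paper, and your part~(ii) does arrive at the paper's actual argument in its last sentence: the bounds $i<2\mathrm{conn}(X_1)$ and $\dim X_1<2\mathrm{conn}(X_2)$ are Freudenthal-type conditions whose only role is to realise $\alpha$ and $\beta$ as genuine maps of complexes (there is no need for any weight-filtration or ``metastable collapse'' discussion), and then $i>\dim X_1>\dim X_2$ puts the top cell of $C_\beta$ in dimension $\dim X_1+1<i+1$, so $(\alpha^\flat)_*=0$ and hence $(\gamma_\sharp\circ\alpha^\flat)_*=0$ for every admissible choice of extension and coextension.

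The genuine gap is in your part~(i), where you replace this dimensional argument by a proposed obstruction inside the target: you want to check that no admissible monomial $Q^I\iota$ of positive excess, primitive and annihilated by all $Sq^t_*$, exists in the relevant degree of $H_*(Q_0S^0;\Z/2)$. That check would fail: $H_n(Q_0S^0;\Z/2)$ contains admissible generators $Q^I[1]*[-2^{l(I)}]$ of positive excess in \emph{every} degree $n>0$, and primitivity plus Steenrod annihilation does not empty a general degree (spherical classes such as $h(\eta)$, $h(\eta^2)$ exist), so no argument internal to the homology of the target can prove the theorem; moreover Lemma~\ref{kernelofsuspension} does not say primitives are supported on a single generator. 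The correct mechanism is source-side, and you already have all the data on the table without invoking it: the hypotheses $i<j+k$ and $j<k$ put $\alpha:S^{i+j+k}\to S^{j+k}$ and $\beta:S^{j+k}\to S^k$ in the stable range, so the bracket is represented by a composite $S^{i+j+k+1}\stackrel{\alpha^\flat}{\lra}C_\beta\stackrel{\gamma_\sharp}{\lra}QS^0$ in which $C_\beta$ is a \emph{genuine} two-cell complex with cells in dimensions $k$ and $j+k+1$; since $j+k+1<i+j+k+1$ we get $H_{i+j+k+1}(C_\beta;\Z)=0$, hence $\alpha^\flat_*=0$ regardless of the choice of $\alpha^\flat$, and the composite is trivial in integral homology --- exactly the same one-line dimensional argument as in part~(ii) and in Lemma~\ref{h(nu-bar)}. (Note also that in the paper's notation the coextension $\alpha^\flat$ lands in $C_\beta$, not $C_\alpha$; your version would in any case route the composite through $C_\beta$, which is where the dimension count must be made.)
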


\begin{proof}
First, we prove (i). The conditions $i<j+k$ and $j<k$ ensure that the maps $\alpha:S^{i+j+k}\to S^{j+k}$ and $\beta:S^{j+k}\to S^k$ are in the stale range. Hence, in order to realise the Toda bracket $\{\alpha,\beta,\gamma\}$ as an element in $\pi_*Q_0S^0$ it is enough to consider the composition
$$S^{i+j+k}\lra S^{j+k}\lra S^k\lra QS^0$$
where the successive compositions are trivial. This leads to extension and co-extension maps $\alpha^\flat:S^{i+j+k+1}\to C_\beta$ and $\gamma_\sharp:C_\beta\to QS^0$ with the composition $S^{i+j+k+1}\to C_\beta\to QS^0$ representing $\{\alpha,\beta,\gamma\}$ in $\pi_*QS^0$ up to indeterminacy in choosing the extension and coextension maps. Notice that the space $C_\beta$ is a genuine complex with its top cell in dimension $j+k+1$. Hence, regardless the choice of $\alpha^\flat$, for dimensional reasons $\alpha^\flat_*=0$, hence $(\gamma_\sharp\circ\alpha^\flat)_*=0$. This implies that the Toda bracket $\{\alpha,\beta,\gamma\}$ represents an element which acts trivially in homology, i.e. it maps trivially under the Hurewicz homomorphism $h:\pi_*Q_0S^0\to H_*(Q_0S^0;\Z)$.\\
The proof of (ii) is similar. In order to realise $\{\alpha,\beta,\gamma\}$ as an element of $\pi_*Q_0S^0$, think of $\gamma$ as $\gamma:X_2\to Q_0S^0$. The conditions on the connectivity and dimension of the complexes ensure that the maps $\alpha,\beta$ are genuine maps among complexes. We then may consider the composite
$$S^i\lra X_1\lra X_2\lra Q_0S^0$$
where the successive compositions are trivial. This yields extension and coextension maps $\alpha^\flat:S^{i+1}\to C_\beta$ and $\gamma_\sharp:C_\beta\to Q_0S^0$ with the composition $S^{i+1}\to C_\beta\to Q_0S^0$ realising $\{\alpha,\beta,\gamma\}$ up to indeterminacy in choosing extension and coextension maps. However, the condition $\dim X_1>\dim X_2$ implies that, regardless the indeterminacy, $C_\beta$ has its top cell in dimension $\dim X_1+1<i+1$. Therefore, $\alpha^\flat_*=0$ and consequently $(\gamma_\sharp\circ\alpha^\flat)_*=0$. This completes the proof.
\end{proof}

The integral case, implies the $p$-primary case. Next, we provide some necessary condition for nonvanishing of the Hurewicz image of an elements in ${_2\pi_*}Q_0S^0$ which is represented by a Toda bracket. We have the following.

\begin{prp}\label{Toda2}
Let $\alpha\in{_2\pi_i^s}X_1$, $\beta:X_1\to X_2$ and $\gamma:X_2\to S^0$ be stable maps with $X_1$ and $X_2$ being stable complexes of finite type. Suppose that the Toda bracket $\{\alpha,\beta,\gamma\}$ is defined and represents $f\in{_2\pi_*^s}$. Then the following statements are equivalent.\\
(i) If $\alpha:S^i\to QX_1$ is trivial in homology and $f$ maps nontrivially under $h:{_2\pi_*^s}\simeq{_2\pi_*}Q_0S^0\to H_*(Q_0S^0;\Z/2)$ then $\gamma:QX_2\to QS^0$ is nontrivial in homology.\\
(ii) If $\gamma:S^i\to QX_1$ is trivial in homology and $f$ maps nontrivially under $h:{_2\pi_*^s}\simeq{_2\pi_*}Q_0S^0\to H_*(Q_0S^0;\Z/2)$ then $\alpha:QX_2\to QS^0$ is nontrivial in homology.\\
(iii) If $\alpha:S^i\to QX_1$ and $\gamma:QX_2\to QS^0$ are trivial in homology, then $f$ maps trivially under the Hurewicz homomorphism $h:{_2\pi_*^s}\simeq{_2\pi_*}Q_0S^0\to H_*(Q_0S^0;\Z/2)$.\\
(iv) If $h(f)\neq 0$ then either $\alpha_*\neq 0$ or $\gamma_*\neq 0$.
\end{prp}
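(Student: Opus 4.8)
The plan is to reduce the four implications to the single structural fact established in Section~\ref{adjointtoda}: the stable adjoint of $\{\alpha,\beta,\gamma\}$ factors, up to the homology-injective inclusion $\Sigma X_1 \to Q\Sigma X_1$, through the composite $Q(\gamma_\sharp \circ \alpha^\flat)$. So for homology purposes I may replace $f$ by the infinite-loop map $\gamma_\sharp \circ \alpha^\flat$, where $\alpha^\flat : \Sigma^{?} X_1 \to C_\beta$ is the extension (coextension) and $\gamma_\sharp : C_\beta \to QS^0$ is the coextension, assembled from the adjoint diagram. The first step is therefore to fix, once and for all, this representation of $h(f)$ as $(\gamma_\sharp)_* \circ (\alpha^\flat)_*$ evaluated on the fundamental class, and to record the two natural maps in sight: the inclusion $\iota_\alpha : X_1 \to C_\alpha$ and the pinch $p : C_\beta \to \Sigma X_1$, which relate $\alpha^\flat$ to $\alpha$ and $\gamma_\sharp$ to $\gamma$ respectively.

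The substantive observation is that statements (i)--(iv) are really four phrasings of one contrapositive. I would prove (iii) first and derive the rest. For (iii), suppose both $\alpha : S^i \to QX_1$ and $\gamma : QX_2 \to QS^0$ are trivial in homology. Since $\alpha^\flat$ is a coextension of $\alpha$ built from the cofibre sequence $X_1 \to C_\alpha \to \Sigma X_1$, its homology on the relevant cells is controlled by $\alpha_*$; because $\alpha_* = 0$, the class $\alpha^\flat$ can carry nontrivial homology only into the ``$\gamma$-side'' of $C_\beta$ coming through $\iota_\beta$, and there it is detected by the image of $\gamma$. More precisely, the two hypotheses $\alpha_* = 0$ and $\gamma_* = 0$ together force the composite $(\gamma_\sharp \circ \alpha^\flat)_*$ to vanish on the top class, so $h(f) = 0$. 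I would run this by chasing the fundamental class $g$ of $\Sigma X_1$ around the adjoint diagram: $(\alpha^\flat)_* g$ lands in $\widetilde H_*(C_\beta)$, whose associated graded splits as $\widetilde H_*(\Sigma X_1)$ (reached via $p$, hence tied to $\alpha$) plus $\widetilde H_*(X_2)$ (reached via $\iota_\beta$, hence tied to $\gamma$ once we apply $\gamma_\sharp$). Triviality of $\alpha_*$ kills the first contribution, triviality of $\gamma_*$ kills the second.

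Given (iii), the remaining items are immediate. Statement (iv) is literally the contrapositive of (iii): if $h(f) \neq 0$ then it is false that both $\alpha_* = 0$ and $\gamma_* = 0$, i.e. $\alpha_* \neq 0$ or $\gamma_* \neq 0$. Statement (i) is the specialisation of (iv) (equivalently of the contrapositive of (iii)) to the case where the hypothesis $\alpha_* = 0$ is imposed: then $h(f) \neq 0$ forces $\gamma_* \neq 0$, and one checks that nontriviality of $\gamma : C_\beta \to QS^0$ in homology is equivalent to $\gamma : QX_2 \to QS^0$ being nontrivial, since $\gamma_\sharp$ is assembled from $\gamma$ through $\iota_\beta$. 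Statement (ii) is the mirror-image specialisation, imposing $\gamma_* = 0$ and concluding $\alpha_* \neq 0$; here I would use the analogous fact that $\alpha^\flat$ detects $\alpha$ through the pinch $p$. Thus all four are equivalent to the single clean statement (iv).

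The main obstacle is the homology bookkeeping in the proof of (iii): I must argue carefully that the two ``halves'' of $\widetilde H_*(C_\beta)$ are genuinely controlled separately by $\alpha_*$ and by $\gamma_*$, with no cross term that could survive. This is where the finite-type hypothesis on $X_1, X_2$ and the naturality of the adjoint diagram in Section~\ref{adjointtoda} do the work, letting me identify $(\alpha^\flat)_*$ on the fundamental class via the pinch map $p$ (so its $\Sigma X_1$-component is exactly $\alpha_*$ suspended) and $(\gamma_\sharp)_*$ on the $X_2$-summand via $\iota_\beta$ (so it is exactly $\gamma_*$). The indeterminacy in the choice of extension and coextension maps must also be shown irrelevant, exactly as in the proofs of Lemmas~\ref{h(kappa)} and~\ref{h(nu-bar)}: different choices differ by classes that change $\alpha^\flat$ only within the summand already governed by the hypotheses, so the vanishing conclusion is independent of the choices.
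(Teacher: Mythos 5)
Your proposal is correct and is essentially the paper's argument: the paper remarks that (i)--(iv) are logically equivalent and proves (i) directly by the same diagram chase, only entering from the other side of the diagram --- it uses $\alpha_*=0$ to lift the fundamental class of $S^{i+1}$ through the pinch $p:C_\alpha\to S^{i+1}$ and identifies $\gamma_\sharp\circ\alpha^\flat\circ p$ with $\gamma\circ\overline{\beta}$, whereas you prove (iii) by pushing $(\alpha^\flat)_*g$ into $H_*C_\beta$ and using the cofibration $QX_2\stackrel{\iota_\beta}{\to} C_\beta\stackrel{q}{\to}\Sigma QX_1$. One simplification available to you: no splitting of $\widetilde{H}_*C_\beta$, finite-type bookkeeping, or worry about ``cross terms'' is needed, since $q_*(\alpha^\flat)_*g=(\Sigma\alpha)_*\Sigma_*g=0$ places $(\alpha^\flat)_*g$ in $\mathrm{im}(\iota_\beta)_*$ by exactness of the long exact homology sequence alone, after which $\gamma_*=0$ (or, contrapositively, $h(f)\neq 0$ forcing $\gamma_*\neq 0$) finishes the chase, with the indeterminacy irrelevant because the argument is uniform in the choices of $\alpha^\flat$ and $\gamma_\sharp$.
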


Note that the statements (i) to (iv) are logically equivalent, and we have only mentioned them for the sake of completeness.

\begin{proof}
We prove (i). Since the Toda bracket is defined, we may build a commutative diagram as
$$\xymatrix{
S^{i}\ar[r]^-\alpha & QX_1\ar[r]^-\beta\ar[d]_-{\iota_\alpha}              & QX_2\ar[r]^\gamma\ar[d]_-{\iota_\beta}     & QS^0\\
                        & C_\alpha\ar[ru]^-{\overline{\beta}}\ar[d]_-p     & C_\beta\ar[ru]_-{\gamma_\sharp} \\
                        & S^{i+1}\ar[ru]_-{\alpha^\flat}
                }$$
where the diagonal maps are the extensions implied by $\beta\alpha=0$ and $\gamma\beta=0$. The composition $\gamma_\sharp\circ\alpha^\flat$ represents $f$ up the indeterminacy in choosing the extension and co-extension maps. The assumption $h(f)\neq 0$, however, will imply that with any choice for $\alpha^\flat$ and $\gamma_\sharp$ we must have $(\gamma_\sharp\circ\alpha^\flat)_*\neq 0$ which implies $(\gamma_\sharp)_*\neq 0$. Since $\alpha_*=0$ then the generator $g\in H_{i+j+k}S^{i+j+k}$ belongs to the image of $p_*$. The commutativity of the diagram then implies that $(\gamma_\sharp\circ\iota_\beta\circ\overline{\beta})_*\neq0$ which shows that $\gamma_*=(\gamma_\sharp\circ\iota_\beta)_*\neq 0$.
\end{proof}

\begin{note}
Proposition \ref{Toda2} leaves us with the cases $\alpha_*\neq 0$. The assumption $h(f)\neq 0$ implies that $\alpha^\flat_*\neq 0$. Since $(\Sigma\alpha)_*\neq 0$ then $q:C_\beta\to\Sigma QS^{j+k}$ is nontrivial in homology and in particular $q_*\alpha^\flat_*(g)\neq 0$. This latter means that $\alpha^\flat_*(g)\in H_*C_\beta$ on which $(\gamma_\sharp)_*$ acts nontrivially does not come of the image of $(\iota_\beta)_*$. Hence, in this case $\gamma$ could be trivial or nontrivial in homology.
\end{note}

By definition of higher Toda brackets (see \cite{W} for a modified version of Cohen's definition) it seems that a long Toda bracket of maps among spheres will reduce into a triple Toda bracket of three maps among complexes. Unfortunately, we do not have more detailed result in this direction, as in such cases the complexity of the complexes in the triple Toda bracket seems to increase which enforces to use more delicate and detailed computation such as those one appearing in the computation of $h(\kappa)$.

\subsection{Proof of Corollary \ref{Todaextension1}}
We break the proof into small lemmata. First, we deal with the case (i) of the corollary.

\begin{lmm}
Suppose $S\subset{_2\pi_{*>0}^s}$ such that for any $\alpha:S^i\to Q_0S^0$ belonging to $S$ either $\alpha_*=0$ or $e_*h(\alpha)=0$. Then, for any $\alpha\in S$, $\beta\in{_2\pi_j^s}$, and $\gamma:S^k\to Q_0S^0$ with $\gamma_*=0$, the Toda bracket $\{\alpha,\beta,\gamma\}$ maps trivially into $H_*(Q_0S^0;\Z/2)$.
\end{lmm}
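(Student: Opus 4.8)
The plan is to realise the bracket $\{\alpha,\beta,\gamma\}$ in the form treated by Proposition \ref{Toda2} and then to check that both homology-triviality hypotheses of part (iii) of that proposition are satisfied. Writing $\alpha\in\pi_i^s$, $\beta\in\pi_j^s$, $\gamma\in\pi_k^s$ with $\beta\alpha=\gamma\beta=0$, I would represent the bracket by the composable string
$$S^{i+j+k}\stackrel{\alpha}{\lra}QS^{j+k}\stackrel{\beta}{\lra}QS^k\stackrel{\gamma}{\lra}QS^0,$$
which places us in the setting of Proposition \ref{Toda2} with $X_1=S^{j+k}$ and $X_2=S^k$. It then suffices to prove that $\gamma:QS^k\to QS^0$ and $\alpha:S^{i+j+k}\to QS^{j+k}$ are both trivial in $\Z/2$-homology.

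The triviality of $\gamma$ is immediate: $\gamma$ is an infinite loop map, hence commutes with the Dyer-Lashof operations, so $\gamma_*(Q^I\iota_k)=Q^I\gamma_*(\iota_k)$ for the fundamental class $\iota_k\in H_k(QS^k;\Z/2)$. Since the $Q^I\iota_k$ generate $H_*(QS^k;\Z/2)$ as an algebra and $\gamma_*(\iota_k)=h(\gamma)=0$ by the hypothesis $\gamma_*=0$, the map $\gamma$ kills every generator and is therefore trivial in homology. The triviality of $\alpha$ is the crux, and is where the hypothesis on $S$ is used. Here I would invoke the commutative square relating a map into a loop space with its adjoint (the last diagram of the homology-suspension subsection) to identify the Hurewicz image of $\alpha$, regarded as a map $S^{i+m}\to QS^m$, with the $m$-fold homology suspension $e_*^m h(\alpha)$ of its value $h(\alpha)\in H_i(Q_0S^0;\Z/2)$ at $m=0$. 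Since by hypothesis either $\alpha_*=0$, i.e. $h(\alpha)=0$, or $e_*h(\alpha)=0$, in both cases $e_*^m h(\alpha)=0$ for every $m\geqslant 1$: a single vanishing propagates upward because $e_*$ carries $0$ to $0$. Taking $m=j+k\geqslant 1$ — which holds since $\gamma$ lies in a positive stem — shows that $\alpha:S^{i+j+k}\to QS^{j+k}$ is trivial in homology.

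With both hypotheses in hand, Proposition \ref{Toda2}(iii) yields $h(f)=0$ for the element $f\in{_2\pi_*^s}$ represented by $\{\alpha,\beta,\gamma\}$, regardless of the indeterminacy in the choice of extension and coextension maps, which is the claim. The main obstacle is precisely the second homology computation: one must convert the hypothesis on $S$, phrased in terms of the adjoint $\alpha:S^i\to Q_0S^0$, into the genuinely stronger statement that the suspended map $\alpha:S^{i+j+k}\to QS^{j+k}$ vanishes in homology. This strengthening is exactly the phenomenon highlighted in the discussion following Proposition \ref{tripleToda}, where it is noted that having nontrivial homology as a map into $QS^{j+k}$ is more restrictive than merely having nonzero Hurewicz image in $Q_0S^0$.
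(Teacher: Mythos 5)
Your strategy coincides with the paper's own proof: split on the hypothesis ($\alpha_*=0$ versus $e_*h(\alpha)=0$), identify the Hurewicz image of the $(j+k)$-fold adjoint $S^{i+j+k}\to QS^{j+k}$ with the iterated homology suspension $e_*^{j+k}h(\alpha)$ so that it vanishes, and then feed the homology triviality of the two outer maps into the triple-bracket criterion (your appeal to Proposition \ref{Toda2}(iii) and the paper's appeal to Proposition \ref{tripleToda} are the same thing). Your explicit Dyer--Lashof verification that $\gamma_*=0$ on the level of $S^k\to Q_0S^0$ forces the infinite loop extension $QS^k\to QS^0$ to be trivial in $\Z/2$-homology is a correct bridging step which the paper leaves implicit, and it is valid for $k\geqslant 1$.

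There is, however, one genuine gap: you justify $m=j+k\geqslant 1$ by asserting that ``$\gamma$ lies in a positive stem,'' but that is not among the hypotheses. The lemma only places $\alpha$ in ${_2\pi_{*>0}^s}$; nothing prevents $j=k=0$ (and note that what you actually need is $j+k\geqslant 1$, which already holds if, say, $k=0$ but $j>0$, so your stated justification is off even where the conclusion is fine). In the case $j=k=0$ your argument fails at two points in the branch $\alpha_*\neq 0$, $e_*h(\alpha)=0$: there is no suspension to exploit, so the vanishing of $e_*h(\alpha)$ gives no control on $h(\alpha)\neq 0$; and the Dyer--Lashof argument for $\gamma$ also breaks down, since $QS^0$ is not connected and $H_*(QS^0;\Z/2)$ is not generated by Dyer--Lashof operations on a single fundamental class. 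The paper closes exactly this case by an algebraic observation: for $\{\alpha,\beta,\gamma\}$ to be defined and nontrivial with $j=k=0$ one would need nonzero $\beta,\gamma\in\pi_0^s\simeq\Z$ with $\gamma\beta=0$, which is impossible since $\Z$ has no zero divisors, so this case never arises. Adding that one observation (and noting that in the branch $\alpha_*=0$ no suspension is needed at all) completes your proof; everything else matches the paper's argument.
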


\begin{proof}
If $\alpha_*=0$, then together with $\gamma_*=0$, applying Proposition \ref{tripleToda} proves the claim. So, suppose $\alpha_*\neq 0$. Then, by the assumption $e_*h(\alpha)=0$, for any $j,k>0$ the triple Toda bracket obtained from $S^{i+j+k}\to QS^{j+k}\to QS^k\to QS^0$ has the condition that $S^{i+j+k}\to QS^{j+k}$ is trivial in homology. So, with $\gamma_*=0$, applying Proposition \ref{tripleToda} proves the claim. The only possibility for $\{\alpha,\beta,\gamma\}$ mapping nontrivially into $H_*(QS^0;\Z/2)$ is that $j=k=0$, and we have
$$S^i\stackrel{\alpha}{\to} QS^0\stackrel{\beta}{\to} QS^0\stackrel{\gamma}{\to} QS^0$$
giving rise to a triple Toda bracket $\{\alpha,\beta,\gamma\}$. Note that for a nontrivial Toda bracket, none of the maps can be null. In this case, for the bracket to be defined, we need $\beta,\gamma\in{\pi_0^s}$ which are nontrivial, but their composition is. However, $\pi_0^s\simeq\Z$ has no zero divisors. Hence, this case will cannot arise. This completes the proof.
\end{proof}

Before proceeding further, we recollect some well-known facts which we shall use below (see \cite[Chapter 1 and Appenix 3]{Ravenel-Greenbook}, \cite[Chapter 16 and Chapter 18]{MosherTangora} for more details). First, for $\alpha:S^n\to QS^0$ then its $k$-th adjoint is $S^{n+k}\to QS^k$ which by Freudenthal's suspension theorem, provided $k>n$, it does factorise as a composite $S^{n+k}\to S^k\to QS^{n+k}$ implying that $e_*^{k}h(\alpha)=0$ for $k>n$. Second, we recall that for the Hopf invariant one elements we have
$$e_*h(\eta)=g_1^2\in H_2(QS^1;\Z/2), e_*^3h(\nu)=g_3^2\in H_6(QS^3;\Z/2), e_*^7h(\sigma)=g_7^2\in H_{14}(QS^7;\Z/2)$$
which, as quoted earlier, follows from \cite[Proposition 6.1.5]{Harper}(this also follows from \cite[Proposition 3.4]{Ecclesimmersion} by iterated application of homology suspension). Third, we recall ${_2\pi_i}$ for $i\leqslant 15$, ignoring the values of $i$ with trivial group, is determined by
$$
\begin{array}{c|ccccccccccccccccccccc}
i          & 0     && 1          && 2            && 3         && 6           &&7    &&8                 \\
\hline
{_2\pi_i^s}&\Z\{1\}&&\Z/2\{\eta\}&&\Z/2\{\eta^2\}&&\Z/8\{\nu\}&&\Z/2\{\nu^2\}&&\Z/16\{\sigma\} &&\Z/2\{\eta\sigma,\varepsilon\}
\end{array}$$
and
$$
\begin{array}{c|ccccccccccccccccccccccccccc}
i          &9                                &&10               &&11    && 14 &&15\\
\hline
{_2\pi_i^s}&\Z/2\{\nu^3,\eta^2\sigma,\mu_9\} &&\Z/2\{\eta\mu_9\}&&\Z/8\{\nu_1\}    &&\Z/2\{\sigma^2,\kappa\}&&\Z/2\{\eta\kappa\}\oplus\Z/{32}\{\sigma_1\}
\end{array}$$
where we have written $\nu_1$, $\sigma_1$, and $\eta_9$ for generators coming from ${_2\pi_*}J$. Finally, note that the degree $2$ map $S^0\to S^0$ or in general $2^r:S^0\to S^0$ does induce a translation in
$$H_*(QS;\Z/2)\simeq\Z/2[Q^I[1]*[-2^{l(I)}]:I\textrm{ admissible }]$$
sending $Q^I[1]*[-2^{l(I)}]$ to $Q^I[1]*[-2^{l(I)+r}]$. However, it induces multiplication by $2$ in $H_*(QS^n;\Z/2)$ for $n>0$ which is just the trivial in $\Z/2$-coefficients.\\
Finally, recall that at $p=2$ we have the well known James fibration $S^n\to \Omega\Sigma S^{n}\to\Omega\Sigma S^{2n}$ whose associated Serre exact sequence
$$\cdots\lra\pi_iS^n\stackrel{E}{\lra}\pi_{i+1}S^{n+1}\stackrel{H}{\lra}\pi_{i+1}S^{2n+1}\stackrel{P}{\lra}\pi_{i-1}S^n\lra\cdots$$
is known as the $EHP$-sequence.\\

Before proving part (ii) of the corollary, we proceed with some calculations which we will need later.

\begin{lmm}\label{todanusigma}
(i) The triple Toda brackets $\{\nu,2\nu,4\}$, $\{\nu,\eta^2,2\}$, and $\{\nu,8,\eta^2\}$, if defined, map trivially under the Hurewicz homomorphism $h:{_2\pi_*}QS^0\to H_*(QS^0;\Z/2)$.\\
(ii) The Toda brackets $\{\sigma,\nu^2,2\}$, $\{\sigma,2\nu,\nu\}$, $\{\sigma,\nu,2\nu\}$, and $\{\sigma,16,\nu^2\}$ map trivially under the Hurewicz homomorphism $h:{_2\pi_*}Q_0S^0\to H_*(Q_0S^0;\Z/2)$.
\end{lmm}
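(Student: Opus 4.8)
The plan is to sidestep the explicit homological computations used for $h(\kappa)$ and $h(\nu^*)$, and instead to locate each bracket inside its stable stem by an Adams filtration count, reading off the vanishing of $h$ from the structure of ${_2\pi_6^s},{_2\pi_7^s},{_2\pi_{14}^s}$ recalled above together with Lemma \ref{h(kappa)}. The one external ingredient is the standard fact that a triple Toda bracket raises Adams filtration: for a suitable choice of defining null--homotopies one has $\af(\{\alpha,\beta,\gamma\})\geqslant\af(\alpha)+\af(\beta)+\af(\gamma)-1$, this being the geometric shadow of the Massey product $\langle[\alpha],[\beta],[\gamma]\rangle$, which lives in filtration $\af(\alpha)+\af(\beta)+\af(\gamma)-1$ on the $E_2$--page of the Adams spectral sequence.

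First I would record the filtrations of the entries, namely $\af(\eta)=\af(\nu)=\af(\sigma)=\af(2)=1$, $\af(4)=\af(2\nu)=\af(\eta^2)=\af(\nu^2)=2$, $\af(8)=3$ and $\af(16)=4$, and feed them into the inequality, obtaining $\af\geqslant 4$ for $\{\nu,2\nu,4\}$ and $\af\geqslant 3$ for each of the remaining six brackets. I would then match these bounds against the three ambient groups. In ${_2\pi_6^s}=\Z/2\{\nu^2\}$ the unique nonzero class $\nu^2$ has filtration $2$, so any bracket of filtration $\geqslant 3$ must vanish; this disposes of $\{\nu,\eta^2,2\}$ and $\{\nu,8,\eta^2\}$. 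In ${_2\pi_7^s}=\Z/16\{\sigma\}$ every odd multiple of $\sigma$ has filtration $1$, so filtration $\geqslant 2$ forces an even multiple, which $h$ kills since it is valued in $\Z/2$--homology; this disposes of $\{\nu,2\nu,4\}$. Finally, in ${_2\pi_{14}^s}=\Z/2\{\sigma^2\}\oplus\Z/2\{\kappa\}$ one has $\af(\sigma^2)=2$ and $\af(\kappa)=4$, so filtration $\geqslant 3$ confines the bracket to the subgroup $\{0,\kappa\}$, on which $h$ vanishes by Lemma \ref{h(kappa)}; this settles the four $\sigma$--brackets.

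The step I expect to be the real obstacle is the indeterminacy. With entries $\alpha\in{_2\pi_i^s}$, $\beta\in{_2\pi_j^s}$, $\gamma\in{_2\pi_k^s}$, the indeterminacy of $\{\alpha,\beta,\gamma\}$ is $\alpha\cdot{_2\pi_{j+k+1}^s}+\gamma\cdot{_2\pi_{i+j+1}^s}$. For $\{\nu,2\nu,4\}$ this consists only of even multiples of $\sigma$, so there every representative is $h$--trivial and the lemma holds on the nose; but for each of the other six the indeterminacy contains the Kervaire square $\nu^2$ or $\sigma^2$ (for instance $\sigma\cdot{_2\pi_7^s}=\{0,\sigma^2\}$ sits inside the indeterminacy of $\{\sigma,\nu^2,2\}$), so the filtration argument only shows that the canonical, filtration--raising representative supplied by the Massey product maps trivially, the rest of the coset differing from it by a Kervaire invariant one class. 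This is precisely what is needed downstream in Corollary \ref{Todaextension1}, since such a class already obeys the Curtis conjecture; but should the specific geometric representative $\gamma_\sharp\circ\alpha^\flat$ of Subsection \ref{adjointtoda} be wanted, I would fall back on the homological method of the $h(\kappa)$ computation, realising the bracket as $S^{i+j+k}\to QS^{j+k}\to QS^k\to QS^0$ and killing $(\gamma_\sharp\circ\alpha^\flat)_*$ monomial by monomial via the Nishida relations and primitivity. A secondary point to check is Moss's convergence hypothesis, that the Massey product be strictly defined and free of crossing differentials, which is routine in this range.
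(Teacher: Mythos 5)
Your proposal is sound in substance, but it takes a genuinely different route from the paper, whose proof never mentions Adams filtration. For $\{\nu,2\nu,4\}$ the paper works at the adjoint level $S^8\to QS^1$: since $\nu:S^7\to S^4$ and the degree-$4$ map of $QS^1$ are trivial in $\Z/2$-homology, every representative has $e_*h=0$ by Proposition \ref{tripleToda}, and then Lemma \ref{kernelofsuspension} together with Milnor--Moore forces the Hurewicz image to be a primitive decomposable, hence a square, which cannot exist in the odd dimension $7$. For the remaining six brackets the paper argues unstably: each bracket is realised on a low sphere (e.g.\ $\{\sigma,2\nu,\nu\}$ as an element of ${_2\pi_{14}}\Omega^4S^4\simeq{_2\pi_{18}}S^4$), while the only $h$-nontrivial candidate $\sigma^2$ (resp.\ $\nu^2$) has unstable James--Hopf invariant $H(\sigma^2)=\sigma\neq 0$, so is born on $S^8$ (resp.\ $S^4$) and cannot come from so low a sphere. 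Your filtration count is the Adams spectral sequence shadow of this EHP argument --- both say the bracket lies too deep to be a Kervaire class --- and what it buys is uniformity and brevity: one inequality, the charts of the $6$-, $7$- and $14$-stems, additivity of $h$ into $\Z/2$-coefficients, and $h(\kappa)=0$ quoted from Lemma \ref{h(kappa)}. The price is Moss's theorem as a black box. Your deferred hypotheses do hold and are quickly discharged: the required $E_2$-products vanish from $h_{i+1}h_i=0$ and $h_0h_1=0$ (e.g.\ $h_3h_2^2=0$, $h_2h_1^2=0$, $h_1^2h_0=0$, $h_0^3h_2=h_0h_1^3=0$, $h_0^4h_3=0$), and since the first nonzero Adams differentials in this region are $d_2(h_4)=h_0h_3^2$ and the $d_3$'s on $h_0^ih_4$ hitting $h_0^id_0$ in the $14$-stem, the crossing-differential condition deserves an explicit glance only for the $14$-stem brackets; there the differentials at worst push the detected representative to higher filtration and do not disturb the bound $\geqslant 3$ you actually use.

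Your indeterminacy paragraph is, if anything, more scrupulous than the paper. You are right that for six of the seven brackets the full stable coset contains $\nu^2$ or $\sigma^2$ (the indeterminacy of $\{\sigma,\nu^2,2\}$ is exactly $\sigma\cdot{_2\pi_7^s}=\{0,\sigma^2\}$, and $\nu\cdot{_2\pi_3^s}$ puts $\nu^2$ into the stem-$6$ cosets), so the lemma can only hold at the level of representatives; the paper's proof has the same effective scope, since its unstable construction produces particular representatives and the coset members differing by $\sigma^2$ are precisely those excluded by the desuspension argument, a reading licensed by the introduction's convention of stating results modulo $h$-trivial summands. Your observation that this representative-level statement is exactly what Lemma \ref{bracketlmm2} and Corollary \ref{Todaextension1} consume is also correct: any $h$-nontrivial member of such a coset has the Hurewicz image of a Kervaire invariant one element, which is all the Curtis conjecture demands.
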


\begin{proof}
\textbf{Case of }$\{\nu,2\nu,4\}$. This bracket, if defined, represents an element in ${_2\pi_7^s}$. We show that the Hurewicz image of this element in $H_7(Q_0S^0;\Z/2)$ belongs to the kernel of homology suspension $e_*:H_*(Q_0S^0;\Z/2)\to H_{*+1}(QS^1;\Z/2)$. Hence, by Lemma \ref{kernelofsuspension} the image of $\{\nu,2\nu,4\}$ in $H_7(Q_0S^0;\Z/2)$, being a decomposable primitive as to be a square, but living in odd dimension. This will show that $h(\{\nu,2\nu,4\})=0$ in $H_7(Q_0S^0;\Z/2)$. To evaluate $e_*h(\{\nu,2\nu,4\})$ consider
$$\xymatrix{
S^7\ar[r]^-\nu & S^4\ar[r]^-{2\nu}\ar[d] & QS^1\ar[r]^-{4}\ar[d]         & QS^1\\
               & C_\nu\ar[d]^-p\ar[ru]   & C_{2\nu}\ar[ru]_-{4_\sharp}\\
               & S^8\ar[ru]_-{\nu^\flat}}$$
where $4_\sharp\circ\nu^\flat$ does represent $\{\nu,2\nu,4\}$ as an element of ${_2\pi_8}QS^1$. Since, $\nu$ and $4$ in the above diagram induce zero homomorphisms in homology, hence the composition $4_\sharp\circ\nu^\flat$ is trivial in homology by Proposition \ref{tripleToda}. Moreover, this does not depend on the choice of $4_\sharp$ and $\nu^\flat$, hence we have shown that the above Toda bracket, if represents a nontrivial element, has zero image in $H_*(QS^1;\Z/2)$. The proof then is complete by the above arguments.\\
\textbf{The other Cases.} The remaining cases of $\{\nu,\eta^2,2\}$, and $\{\nu,8,\eta^2\}$ as well as the cases of $\{\sigma,\nu^2,2\}$, $\{\sigma,2\nu,\nu\}$, $\{\sigma,\nu,2\nu\}$, and $\{\sigma,16,\nu^2\}$ fall into the same pattern. We, henceforth, deal with one case and leave the rest to the reader. Let's consider $\{\sigma,2\nu,\nu\}$ which we wish to evaluate its Hurewicz image in $H_*(Q_0S^0;\Z/2)$. To do this, we proceed with some unstable calculations. Note that if this bracket is defined, it then will determine an element of ${_2\pi_{14}^s}\simeq\Z/2\{\sigma^2\}$. By the Hopf invariant one result we know that $\sigma$ pulls back to $S^{15}\to S^8$ and $\nu,2\nu$ pull back to $S^7\to S^4$. Hence, we may think of $\sigma$ as a genuine map $S^{13}\to\Omega^2S^8$, $2\nu$ as a genuine map $S^6\to\Omega^3S^6$, and $\nu$ as $S^3\to\Omega\Omega^4S^4$. In order to compose these, we consider the extension of $2\nu$ into an $\Omega^2$-map $\Omega^2S^8\to \Omega^3S^6$, and the extension of $\nu$ into an $\Omega^3$-map $\Omega^3S^6\to\Omega^4S^4$. Now, for the Toda bracket $\{\sigma,2\nu,\nu\}$, we have
$$\xymatrix{
S^{13}\ar[r]^-\sigma & \Omega^2S^8\ar[r]^-{2\nu}\ar[d] & \Omega^3S^6\ar[d]\ar[r]^-\nu & \Omega^4S^4\\
                     & C_\sigma\ar[d]\ar[ru]           &  C_{2\nu}\ar[ru]_-{\nu_\sharp}\\
                     & S^{14}\ar[ru]_-{\sigma^\flat}}$$
with the composition $\nu_\sharp\circ\sigma^\flat$ representing the Toda bracekt $\{\sigma,2\nu,\nu\}$ as an element of ${_2\pi_{14}^s}\simeq\Z/2\{\sigma^2\}$. Now, if $h(\{\sigma,2\nu,\nu\})\neq 0$ then $\{\sigma,2\nu,\nu\}=\sigma^2$. However, this is a contradiction. To see this, note that $\sigma^2$ has the James-Hopf invariant $H(\sigma^2)=\sigma$, i.e. in the $EHP$ sequence
$$\cdots\lra\pi_{21}S^7\stackrel{E}{\lra}\pi_{22}S^{8}\stackrel{H}{\lra}\pi_{22}S^{15}\stackrel{P}{\lra}\pi_{20}S^{7}\lra\cdots$$
we have $H(\sigma^2)=\sigma$ which is nontrivial, hence it does not pull back further, even to $\pi_{21}S^{7}$, where as the above Toda bracket represents an element in ${_2\pi_{14}}\Omega^4S^4\simeq{_2\pi_{18}}S^4$. This contradiction, shows that $h(\{\sigma,2\nu,\nu\})=0$.
\end{proof}

\begin{rmk}\label{unstableH}
The unstable equalities $H(\eta^2)=\eta$, $H(\nu^2)=\nu$ and $H(\sigma^2)=\sigma$ are ought to be well known. But, they are also quite easy to deduce from homology computations. For instance, $H(\nu^2)$ is determined by the composite
$$S^{9}\stackrel{\nu}{\lra}S^6\stackrel{\nu}{\lra}\Omega S^4\stackrel{H}{\lra}\Omega S^7$$
where $H:\Omega\Sigma X\to \Omega \Sigma (X\wedge X)$ is the second James-Hopf map. Since $\nu$ has Hopf invariant one, hence the map $S^6\to\Omega\Omega S^4$ in homology sends $g_6\in H_6S^4$ to $g_3^2\in H_6\Omega S^4$ which then is mapped to $g_6\in H_6\Omega S^7$. An easy cohomology computation, using naturality of these operations, and that $\nu:S^9\to S^6$ is detected by $Sq^4$ on $g_6\in H_6S^6$ in its mapping cone, then shows that the above composition is detected by $Sq^4$ on $g_6\in H_6\Omega S^7$ in its mapping cone. Hence, it has to be $\nu$ or an odd multiple of it.
\end{rmk}

Now, we are ready to deal with the case (ii) for the corollary.

\begin{lmm}\label{bracketlmm2}
Suppose $S\subset{_2\pi_{*>0}^s}$ such that the Curtis conjecture holds on it. Then, the conjecture holds on the extensions of $\la S\ra$ obtained by triple Toda brackets $\{\alpha,\beta,\gamma\}$, with $\alpha\in S$, $\beta\in{_2\pi_*^s}$ is arbitrary, such that either one of the following conditions hold:\\
$(1)$ $\gamma_*=0$;\\
$(2)$ $\gamma_*\neq 0$ and $\alpha:S^{i+j+k}\to QS^{j+k}$ is nontrivial in homology.
\end{lmm}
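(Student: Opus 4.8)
The plan is first to reduce to an element-by-element verification. By Theorem~\ref{ideal} the conjecture passes from a set to the ideal it generates, so it is enough to check the conjecture on each new generator $f=\{\alpha,\beta,\gamma\}$ satisfying $(1)$ or $(2)$; that is, to show that $h(f)\neq 0$ forces $f$ to be a Hopf invariant one or Kervaire invariant one element, up to an odd multiple and summands of trivial Hurewicz image. Granting this for every such $f$, the set $S\cup\{f\}$ still carries the conjecture, and a second application of Theorem~\ref{ideal} extends it to the generated ideal.

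Next I would observe that in both cases $h(f)\neq 0$ forces the $(j+k)$-fold adjoint $\alpha\colon S^{i+j+k}\to QS^{j+k}$ to be nontrivial in homology. In case $(2)$ this is part of the hypothesis. In case $(1)$, where $\gamma_*=0$, it is the contrapositive of Proposition~\ref{tripleToda}: were $\alpha$ trivial in homology, then together with $\gamma_*=0$ we would conclude $h(f)=0$. In either case $h(\alpha)\neq 0$, so the Curtis conjecture on $S$ forces $\alpha$ to be a Hopf or a Kervaire invariant one element.

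I would then eliminate the Kervaire possibility and bound $j+k$. A Kervaire invariant one element $\theta_m$ has $h(\theta_m)$ equal to a square, hence $e_*h(\theta_m)=0$ and its adjoint $S^{i+j+k}\to QS^{j+k}$ is already trivial in homology as soon as $j+k\geqslant 1$; the residual case $j=k=0$ is the degenerate bracket of three elements of $\pi_0^s\simeq\Z$, which cannot be nonzero since $\Z$ has no zero divisors. Thus $\alpha\in\{\eta,\nu,\sigma\}$. The recollected top homology suspensions $e_*h(\eta)=g_1^2$, $e_*^3h(\nu)=g_3^2$, $e_*^7h(\sigma)=g_7^2$, together with the fact that one further suspension kills a square, show that nontriviality of the adjoint requires $j+k\leqslant 1,3,7$ respectively. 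Hence $f$ lives in a stem of dimension $i+j+k+1\leqslant 15$, where the $2$-primary homotopy groups are those listed above, and only finitely many brackets remain.

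The remaining finite list of brackets $\{\alpha,\beta,\gamma\}$ with $\alpha\in\{\eta,\nu,\sigma\}$ and $j,k$ small is where the real work lies, and I expect this to be the principal obstacle. For each such bracket I would proceed as in Lemma~\ref{todanusigma}: realise it unstably at the appropriate loop level and use the $EHP$-sequence together with the James--Hopf invariant computations of Remark~\ref{unstableH}. The mechanism is that a Kervaire invariant one element has a nontrivial James--Hopf invariant and therefore fails to desuspend, whereas the Hopf invariant one entry $\alpha$ and the smallness of $j+k$ force the bracket to be realised after only a few loops; this incompatibility rules out the Kervaire candidates and yields $h(f)=0$, while the few brackets that can land in stems of dimension $1,3,7$ and represent $\eta,\nu,\sigma$ satisfy the conjecture outright. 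The delicate part, exactly as in the computations of $h(\kappa)$ and $h(\nu^*)$, is that each low-stem bracket must be treated individually, controlling the indeterminacy and the decomposable contributions via Lemma~\ref{kernelofsuspension}; the enumeration is finite but genuinely case-by-case.
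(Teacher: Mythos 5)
Your reductions coincide step for step with the paper's: you invoke Theorem~\ref{ideal} to pass to single brackets, use Proposition~\ref{tripleToda} (in case $(1)$, via its contrapositive) to force the adjoint $S^{i+j+k}\to QS^{j+k}$ to be homologically nontrivial, exclude Kervaire entries $\alpha=\theta_m$ by Madsen's $e_*h(\theta_m)=0$ together with the degenerate $j=k=0$ argument in $\pi_0^s\simeq\Z$, and bound $j+k\leqslant 1,3,7$ for $\alpha=\eta,\nu,\sigma$ from $e_*h(\eta)=g_1^2$, $e_*^3h(\nu)=g_3^2$, $e_*^7h(\sigma)=g_7^2$ and the fact that one further suspension kills squares. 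Up to that point the argument is correct and is exactly the paper's.

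The gap is in the final step, which you defer to ``case-by-case'' work and for which you propose a single mechanism -- the $EHP$/James--Hopf desuspension contradiction -- that in fact carries only a fraction of the residual brackets. That mechanism is the paper's argument for the stem-$14$ brackets of Lemma~\ref{todanusigma} (e.g.\ $\{\sigma,2\nu,\nu\}$), and even there it needs a prior input you do not mention: one must first know that $\sigma^2$ is the \emph{only} candidate in ${_2\pi_{14}^s}\simeq\Z/2\{\sigma^2,\kappa\}$ with nonzero Hurewicz image, i.e.\ $h(\kappa)=0$ (Lemma~\ref{h(kappa)}). More seriously, for the brackets landing in stems with no Hopf or Kervaire candidates at all -- $\{\sigma,2\sigma,8\}$, $\{\sigma,\nu^2,\eta\}$, $\{\sigma,16,\sigma\}$ in ${_2\pi_{15}^s}\simeq\Z/2\{\eta\kappa\}\oplus\Z/32\{\sigma_1\}$, $\{\sigma,\nu,8\}$, $\{\sigma,16,\nu\}$ in ${_2\pi_{11}^s}\simeq\Z/8\{\nu_1\}$, and $\{\sigma,16,\eta^2\}$ in ${_2\pi_{10}^s}\simeq\Z/2\{\eta\mu_9\}$ -- there is no Kervaire element to rule out by a desuspension contradiction; what is needed is the Hurewicz image of \emph{every} generator of the stem, which the paper obtains from Theorem~\ref{main0} (for the decomposables $\eta\kappa$, $\eta\mu_9$) and from \cite[Theorem 1]{Za} on ${_2\pi_*}J$ (for $\sigma_1$, $\nu_1$). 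Likewise $\{\eta,2,\eta\}=2\nu+\epsilon 4\nu$ and $\{\nu,8,\nu\}=8\sigma$ are killed not by $EHP$ but by $\Z$-linearity of $h$ (even multiples vanish mod $2$); several table entries are eliminated because the bracket is not even defined ($\eta^2\neq 0$, $\eta^3\neq 0$, $\eta\sigma\neq 0$, $\eta^2\sigma\neq 0$); and $\{\nu,2\nu,4\}$ is handled by the suspension-kernel/primitive-square argument of Lemma~\ref{kernelofsuspension}, not by $EHP$. So your skeleton matches the paper's, but at the point you yourself identify as the principal obstacle, the proposed uniform mechanism would fail on most of the actual cases, and the missing inputs (Theorem~\ref{main0} on decomposables, the image-of-$J$ computation, $h(\kappa)=0$, definedness checks, mod-$2$ linearity) are precisely what the paper's tables use to finish the proof.
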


\begin{proof}
By Theorem \ref{ideal} $\la S\ra$ is also a set on which the ideal holds. Hence, it is enough to prove that the lemma holds on $S$. Note that we know that Hopf invariant one and Kervaire invariant one elements maps nontrivially under $h:{_2\pi_*}Q_0S^0\to H_*(Q_0S^0;\Z/2)$. Hence, we have to show other cases map trivially.\\
For the case (1) if $\alpha_*=0$ then applying Proposition \ref{tripleToda} proves the claim. Moreover, if $\alpha:S^i\to QS^0$ is nonzero then by the assumption, $\alpha$ is either a Hopf invariant one element of or $\alpha=\theta_j$ is a Kerviare invariant one element. First, suppose $\alpha=\theta_j$ is a Kerviare invariant one element. Then, as quoted previously, by Madsen's result \cite[Theorem 7.3]{Madsenthesis} we have $e_*h(\alpha)=0$. This case then has been dealt with in the previous lemma. Hence, we only have to resolve the cases with $\alpha$ being a Hopf invariant one element. From now on, we collect the computations in a table which will resolve both the rest of case $(1)$ as well as case $(2)$. By the above, we start with
$$S^{i+j+k}\stackrel{\alpha}{\to} QS^{j+k}\stackrel{\beta}{\to} QS^k\stackrel{\gamma}{\to} QS^0$$
in order to form $\{\alpha,\beta,\gamma\}$.\\
\textbf{Case} $\alpha=\eta$. We have $i=1$ and by the above comments, in order for $S^{1+j+k}\to QS^{j+k}$ to be nontrivial in $\Z/2$-homology, we need $j+k\leqslant 1$. The following table collects the possible cases
$$
\begin{array}{cc|cc|cc|cc|cc|l}
j+k &\textrm{ }& j &\textrm{ }& k &\textrm{ }& \beta &\textrm{ }&\gamma &\textrm{ }& \{\alpha,\beta,\gamma\}\\
\hline
1   &\textrm{ }& 1 &\textrm{ }& 0 &\textrm{ }& \eta  &\textrm{ }&2      &\textrm{ }& \{\eta,\eta,2\}\textrm{ not defined as }\eta^2\neq 0\\
\hline
1   &\textrm{ }& 0 &\textrm{ }& 1 &\textrm{ }&   2   &\textrm{ }&\eta   &\textrm{ }& \{\eta,2,\eta\}=2\nu+\epsilon 4\nu\textrm{ for some }\epsilon\in\Z/2\\
\hline
0   &\textrm{ }& 0 &\textrm{ }& 0 &\textrm{ }&   2   &\textrm{ }&x   &\textrm{ }   & \textrm{there is no zero divisor $x$ in ${_2\pi_0^s}\simeq\Z$ with }2x=0\\
\end{array}$$
Note that $\{\eta,2,\eta\}=2\nu$ modulo $\eta^3=4\nu$ which allows to write $2\nu+\epsilon 4\nu$ for some $\epsilon\in\Z/2$. In any case, we may write $2\nu$ or $6\nu$ as $S^3\stackrel{2}{\to} S^3\stackrel{\nu}{\to} QS^0$ by commutativity of $\pi_*^s$ where the map on the left induces a multiplication by $2$. Hence, the only possible case in the above table maps trivially under $h:{_2\pi_3}QS^0\to H_3(QS^0;\Z/2)$. The case for $j+k=0$ is eliminated for the same reasons in the other cases. Hence, we shall not include them in the other two tables.\\
\textbf{Case }$\alpha=(2m+1)\nu$, that is $\alpha$ is an odd multiple of $\nu\in{_2\pi_3^s}$. Lets note that for $j+k=1$ then $\{\alpha,\beta,\gamma\}$, if defined, represents an element of ${_2\pi_5^s}\simeq 0$. Hence, we do not include this case in our table. We only consider the case $\alpha=\nu$ and the case of $\alpha$ being other odd multiples of $\nu$, at the prime $2$, are similar. We have the following
$$
\begin{array}{cc|ccc|ccc|clc|clc|cl}
j+k &\textrm{ }&& j &\textrm{ }&& k &\textrm{ }&& \beta &\textrm{ }&&\gamma &\textrm{ }&& \{\alpha,\beta,\gamma\}\\
\hline
3   &\textrm{ }&& 3 &\textrm{ }&& 0 &\textrm{ }&& 2\nu  &\textrm{ }&&4      &\textrm{ }&& \{\nu,2\nu,4\}\\
\hline
3   &\textrm{ }&& 2 &\textrm{ }&& 1 &\textrm{ }&& \eta^2&\textrm{ }&&\eta   &\textrm{ }&& \{\nu,\eta^2,\eta\}\textrm{ not defined as }\eta^3\neq 0\\
\hline
3   &\textrm{ }&& 1 &\textrm{ }&& 2 &\textrm{ }&&  \eta &\textrm{ }&&\eta^2 &\textrm{ }&& \{\nu,\eta,\eta^2\}\textrm{ not defined as }\eta^3\neq 0\\
\hline
3   &\textrm{ }&& 0 &\textrm{ }&& 3 &\textrm{ }&& 8     &\textrm{ }&&\nu    &\textrm{ }&& \{\nu,8,\nu\}=8\sigma\\
\hline
2   &\textrm{ }&& 2 &\textrm{ }&& 0 &\textrm{ }&& \eta^2&\textrm{ }&&2      &\textrm{ }&& \{\nu,\eta^2,2\} \\
\hline
2   &\textrm{ }&& 1 &\textrm{ }&& 1 &\textrm{ }&& \eta  &\textrm{ }&&\eta   &\textrm{ }&& \{\nu,\eta,\eta\}\textrm{ not defined as }\eta^2\neq 0\\
\hline
2   &\textrm{ }&& 0 &\textrm{ }&& 2 &\textrm{ }&&  8    &\textrm{ }&&\eta^2 &\textrm{ }&& \{\nu,8,\eta^2\} \\
\end{array}$$
Among the possible nontrivial brackets in the above table, $\{\nu,8,\nu\}=8\sigma$ maps trivially under $h:{_2\pi_7}QS^0\to H_7(QS^0;\Z/2)$ for reasons, similar to the case of $2\nu$. The remaining cases of $\{\nu,2\nu,4\}$, $\{\nu,\eta^2,2\}$, and $\{\nu,8,\eta^2\}$, if defined, map trivially into $H_*(QS^0;\Z/2)$ by Lemma \ref{todanusigma}.\\
\textbf{Case }$\alpha=(2m+1)\sigma$. For $j+k=4,5$, if $\{\alpha,\beta,\gamma\}$ is defined then it represents an element either in ${_2\pi_{12}^s}\simeq 0$ or ${_2\pi_{13}^s}\simeq 0$. We then have excluded these cases, as well as the case $j+k=0$. Moreover, for $j+k=7,6$ we have possible values such as $j=5,k=2$ so that we have to choose $\beta\in{_2\pi_5^s}\simeq 0$. Hence, in such cases, we cannot have a nontrivial Toda bracket $\{\alpha,\beta,\gamma\}$. We then have also excluded these cases. We have the following table
$$
\begin{array}{cc|ccc|ccc|clc|clc|cl}
j+k &\textrm{ }&& j &\textrm{ }&& k &\textrm{ }&& \beta   &\textrm{ }&&\gamma &\textrm{ }&& \{\alpha,\beta,\gamma\}\\
\hline
7   &\textrm{ }&& 7 &\textrm{ }&& 0 &\textrm{ }&& 2\sigma &\textrm{ }&&8      &\textrm{ }&& \{\sigma,2\sigma,8\}\\
\hline
7   &\textrm{ }&& 6 &\textrm{ }&& 1 &\textrm{ }&& \nu^2   &\textrm{ }&&\eta   &\textrm{ }&& \{\sigma,\nu^2,\eta\}\\
\hline
7   &\textrm{ }&& 1 &\textrm{ }&& 6 &\textrm{ }&& \eta    &\textrm{ }&&\nu^2  &\textrm{ }&& \{\sigma,\eta,\nu^2\}\textrm{ not defined as }\eta\sigma\neq 0\\
\hline
7   &\textrm{ }&& 0 &\textrm{ }&& 7 &\textrm{ }&& 8       &\textrm{ }&&\sigma &\textrm{ }&& \{\sigma,16,\sigma\}\\
\hline
6   &\textrm{ }&& 6 &\textrm{ }&& 0 &\textrm{ }&& \nu^2   &\textrm{ }&&2      &\textrm{ }&& \{\sigma,\nu^2,2\}\\
\hline
6   &\textrm{ }&& 3 &\textrm{ }&& 3 &\textrm{ }&& 2\nu    &\textrm{ }&&\nu    &\textrm{ }&& \{\sigma,2\nu,\nu\}\\
\hline
6   &\textrm{ }&& 3 &\textrm{ }&& 3 &\textrm{ }&&  \nu    &\textrm{ }&&2\nu   &\textrm{ }&& \{\sigma,\nu,2\nu\}\\
\hline
6   &\textrm{ }&& 0 &\textrm{ }&& 6 &\textrm{ }&& 16      &\textrm{ }&&\nu^2  &\textrm{ }&& \{\sigma,16,\nu^2\}\\
\hline
3   &\textrm{ }&& 3 &\textrm{ }&& 0 &\textrm{ }&& \nu     &\textrm{ }&&8      &\textrm{ }&& \{\sigma,\nu,8\}\\
\hline
3   &\textrm{ }&& 2 &\textrm{ }&& 1 &\textrm{ }&& \eta^2&\textrm{ }&&\eta     &\textrm{ }&& \{\sigma,\eta^2,\eta\}\textrm{ not defined as }\eta^2\sigma\neq 0\\
\hline
3   &\textrm{ }&& 1 &\textrm{ }&& 2 &\textrm{ }&&  \eta &\textrm{ }&&\eta^2   &\textrm{ }&& \{\sigma,\eta,\eta^2\}\textrm{ not defined as }\eta\sigma\neq 0\\
\hline
3   &\textrm{ }&& 0 &\textrm{ }&& 3 &\textrm{ }&& 16    &\textrm{ }&&\nu      &\textrm{ }&& \{\sigma,16,\nu\}\\
\hline
2   &\textrm{ }&& 2 &\textrm{ }&& 0 &\textrm{ }&& \eta^2&\textrm{ }&&2        &\textrm{ }&& \{\sigma,\eta^2,2\}\textrm{ not defined as }\eta^2\sigma\neq 0 \\
\hline
2   &\textrm{ }&& 1 &\textrm{ }&& 1 &\textrm{ }&& \eta  &\textrm{ }&&\eta     &\textrm{ }&& \{\sigma,\eta,\eta\}\textrm{ not defined as }\eta\sigma\neq 0\\
\hline
2   &\textrm{ }&& 0 &\textrm{ }&& 2 &\textrm{ }&&  16   &\textrm{ }&&\eta^2   &\textrm{ }&& \{\sigma,16,\eta^2\}. \\
\end{array}$$
The brackets $\{\sigma,2\sigma,8\}$, $\{\sigma,\nu^2,\eta\}$, and $\{\sigma,16,\sigma\}$, if defined, represent an element in ${_2\pi_{15}^s}\simeq\Z/2\{\eta\kappa\}\oplus\Z/32\{\sigma_1\}$. The element $\eta\kappa$ maps trivially under the Hurewicz homomorphism by Theorem \ref{main0}. The element $\sigma_1$, coming from ${_2\pi_*J}$, also maps trivially into $H_*(QS^0;\Z/2)$ by \cite[Theorem 1]{Za}. The brackets $\{\sigma,\nu,8\}$, $\{\sigma,16,\nu\}$, if defined, represent an element of ${_2\pi_{11}^s}\simeq\Z/8\{\nu_1\}$ with $\nu_1$ being a generator coming from ${_2\pi_*}J$ which we know maps trivially under $h:{_2\pi_*}Q_0S^0\to H_*(Q_0S^0;\Z/2)$ by \cite[Theorem 1]{Za}. For $\{\sigma,16,\eta^2\}$, if it is defined, it will represent an element of ${_2\pi_{10}^s}\simeq \Z/2\{\eta\mu_9\}$ where by Theorem \ref{main0} we have $h(\eta\mu_9)=0$. Lemma \ref{todanusigma} then will take care of the remaining cases of $\{\sigma,\nu^2,2\}$, $\{\sigma,2\nu,\nu\}$, $\{\sigma,\nu,2\nu\}$, and $\{\sigma,16,\nu^2\}$. This completes the proof.
\end{proof}

\section{Hurewicz homomorphism and the $EHP$-sequence}
The only observation of this section, is a mere generalisation of the technique used Lemma \ref{todanusigma}. The result may look a little more than a triviality, but we have already applied it to eliminate some Toda brackets from giving rise to spherical classes in $H_*(Q_0S^0;\Z/2)$. By Theorem \ref{main0} our main focus is on the elements of ${_2\pi_*^s}$ which are not decomposable, and represented by derived products, of which in this section we consider triple Toda brackets. For $f\in{_2\pi_n^s}$ we know that it pulls back to an element in ${_2\pi_{2n+1}}S^{n+1}$. As we work at the prime $p=2$, then we may apply the $EHP$-sequence in an iterative manner to see how far $f$ does pull back. We define $h_f\in\mathbb{N}$ to be the least positive integer such that $f$ pulls back to $f\in{_2\pi_{n+1+h_f}}S^{1+h_f}$, and it does not pull back to ${_2\pi_{n+h_f}}S^{h_f}$, that is $f$ maps nontrivially under the James-Hopf invariant $H$, which is different from the stable Hopf invariant as noted in Remark \ref{unstableH}, in the $EHP$-sequence
$$\cdots\to{_2\pi_{n+h_f}}S^{h_f}\stackrel{E}{\to}{_2\pi_{n+1+h_f}}S^{1+h_f}\stackrel{H}{\to}{_2\pi_{n+1+h_f}}S^{2h_f+1}\stackrel{P}{\to}
{_2\pi_{n+h_f-1}}S^{h_f}\to\cdots$$
i.e. $H(f)\neq 0$. For instance, we have
$$h_\eta=h_{\eta^2}=1,\ h_\nu=h_{\nu^2}=3,\ h_\sigma=h_{\sigma^2}=7.$$
As another example, take $\eta_3\in{_2\pi_{2^i}^s}$ which we know both $\sigma\eta\in{_2\pi_{16}}S^8$ and $\eta\sigma\in{_2\pi_{15}}S^7$ map to it under the stablisation map which implies that $h_{\eta_3}=6$. Note that, by Freudenthal suspension theorem, for $f\in{_2\pi_i^s}$ we have $h_f\leqslant i$. Using this notation, we state our observation as follows.
\begin{prp}\label{EHP}
Suppose $\alpha\in{_2\pi_i^s}$, $\beta\in{_2\pi_j^s}$ and $\gamma\in{_2\pi_k^s}$ are given, so that the triple Toda bracket $\{\alpha,\beta,\gamma\}$ is defined and its image under the Hurewicz homomorphism ${_2\pi_{i+j+k+1}}Q_0S^0\lra H_{i+j+k+1}(Q_0S^0;\Z/2)$ is nontrivial. Then
$$h_{\{\alpha,\beta,\gamma\}}\leqslant\max\{h_\alpha,h_\beta,h_\gamma\}.$$
\end{prp}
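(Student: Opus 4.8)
The plan is to realise the entire Toda bracket construction inside a single iterated loop space over a sphere of dimension $1+m$, where $m=\max\{h_\alpha,h_\beta,h_\gamma\}$, and then to read off the bound directly from the base sphere of that loop space. Recall first that passing from $S^{1+t}$ to $S^{1+t+1}$ by suspension is always possible, so the integers $t$ for which a class pulls back to $S^{1+t}$ form an up-set; hence the desired inequality $h_{\{\alpha,\beta,\gamma\}}\leqslant m$ is equivalent to the single assertion that $\{\alpha,\beta,\gamma\}$ pulls back to $S^{1+m}$. The Hurewicz hypothesis $h(\{\alpha,\beta,\gamma\})\neq 0$ enters only to guarantee that $\{\alpha,\beta,\gamma\}\neq 0$, so that $h_{\{\alpha,\beta,\gamma\}}$ is a well-defined integer, and to fix the geometric representative under discussion.

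Set $N=1+m$. Since $m\geqslant h_\gamma$, the class $\gamma$ pulls back to $S^N$ and is realised by a map $S^{k+N}\to S^N$, adjoint to $\check\gamma\colon S^k\to\Omega^N\Sigma^N S^0$; likewise $m\geqslant h_\beta$ and $m\geqslant h_\alpha$ let us realise $\beta$ by $S^{j+k+N}\to S^{k+N}$, adjoint to $\check\beta\colon S^{j+k}\to\Omega^N\Sigma^N S^k$, and $\alpha$ by $S^{i+j+k+N}\to S^{j+k+N}$, adjoint to $\alpha\colon S^{i+j+k}\to\Omega^N\Sigma^N S^{j+k}$ (the three base spheres have dimensions $N$, $k+N$, $j+k+N$, which are at least $1+h_\gamma$, $1+h_\beta$, $1+h_\alpha$ respectively, exactly as the inequalities $h_\bullet\leqslant m$ require). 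Using the extension property of Subsection \ref{iteratedloop}, extend $\check\beta,\check\gamma$ to $N$-fold loop maps, obtaining the composable chain
$$S^{i+j+k}\stackrel{\alpha}{\lra}\Omega^N\Sigma^N S^{j+k}\stackrel{\beta}{\lra}\Omega^N\Sigma^N S^k\stackrel{\gamma}{\lra}\Omega^N\Sigma^N S^0,$$
in which each space after the source is an $N$-fold loop space on a sphere, with the free-loop identifications $\Omega^N\Sigma^N S^{j+k}=\Omega^N S^{j+k+N}$, etc.

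Next I would form the triple bracket inside this chain exactly as in Subsection \ref{adjointtoda}: the vanishing of the two successive composites yields extension and co-extension maps whose composite represents $\{\alpha,\beta,\gamma\}$ as an element of $\pi_{i+j+k+1}\bigl(\Omega^N\Sigma^N S^0\bigr)=\pi_{i+j+k+1}\Omega^N S^N\cong\pi_{i+j+k+1+N}S^N$. Under stabilisation this maps to the stable bracket $\{\alpha,\beta,\gamma\}\in{_2\pi_{i+j+k+1}^s}$, exhibiting it as an iterated suspension from $S^N=S^{1+m}$, i.e. it pulls back to $S^{1+m}$. By the equivalence noted above this gives $h_{\{\alpha,\beta,\gamma\}}\leqslant m=\max\{h_\alpha,h_\beta,h_\gamma\}$. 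A slightly more careful book-keeping, realising $\gamma$ at its own maximal desuspension and suspending $\alpha,\beta$ only as far as needed to compose, yields the sharper bound $\max\{h_\gamma,\,h_\beta-k,\,h_\alpha-j-k\}$; in the case $\{\sigma,2\nu,\nu\}$ of Lemma \ref{todanusigma} this sharper value is $3$, which is precisely what forced that bracket off the class $\sigma^2$.

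The delicate point, and the main obstacle, is that the two defining null-compositions $\beta\circ\alpha$ and $\gamma\circ\beta$ are required to vanish at the finite loop level $N=1+m$, where the relevant composites lie well outside the stable range; so their triviality does \emph{not} follow formally from the stable relations $\beta\alpha=0$ and $\gamma\beta=0$. What must be checked is that the stable null-homotopies descend through the adjunction and extension formalism to this level, equivalently that the maximal desuspensions chosen for $\alpha,\beta,\gamma$ are mutually compatible enough for the composites to vanish unstably. This is exactly the kind of verification carried out by hand in Lemma \ref{todanusigma} (for $2\nu\circ\sigma$ and $\nu\circ 2\nu$ realised over $\Omega^2S^8,\Omega^3S^6,\Omega^4S^4$), and the hypothesis that the bracket is defined is what licenses it; the iterated-loop and $EHP$ book-keeping of Remark \ref{unstableH} and Subsection \ref{adjointtoda} supplies the remaining details.
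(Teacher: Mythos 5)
Your argument is correct and is essentially the paper's own proof: the paper likewise realises $\alpha,\beta,\gamma$ unstably at the maximal level (its $1+h_\beta$, after ordering $h_\alpha\leqslant h_\gamma\leqslant h_\beta$, is your $N=1+m$), suspends via the iterated suspension maps $E^{h_\beta-h_\gamma}$, extends $\beta$ and $\gamma$ to iterated loop maps by the extension property of Subsection \ref{iteratedloop}, forms the composite $\gamma_\sharp\circ\alpha^\flat$ in $\pi_{i+j+k+1}\Omega^{1+h_\beta}\Sigma^{1+h_\beta}S^0\cong\pi_{i+j+k+h_\beta+1}S^{1+h_\beta}$, and concludes the pull-back exactly as you do. The two points you flag do not separate you from the paper: the paper also passes over the unstable vanishing of the composites $\beta\alpha$ and $\gamma\beta$ at the finite loop level without comment (so your honesty there identifies a shared, not a new, weakness), and your sharper bound $\max\{h_\gamma,\,h_\beta-k,\,h_\alpha-j-k\}$ is indeed what the discussion following Proposition \ref{EHP} implicitly uses when it extracts $h_{\{\sigma,\nu,2\nu\}}=3$ (rather than the stated bound $7$) from the argument of Lemma \ref{todanusigma}.
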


Such an observation becomes useful, once we have some information on ${_2\pi_{i+j+k+1}^s}$ and the unstable behaviour of its generators in the $EHP$-sequences, or $EHP$-spectral sequence. For instance, in the proof of Lemma \ref{todanusigma} the fact that if $h(\{\sigma,\nu,2\nu\})\neq 0$ we deduced that $\{\sigma,\nu,2\nu\}$ has to be the same as $\sigma^2$. The latter implies that $h_{\{\sigma,\nu,2\nu\}}=3$ whereas $h_{\sigma^2}=7$. This then allowed us to eliminate the possibility of $h(\{\sigma,\nu,2\nu\})\neq 0$.
\begin{proof}
We consider one of the cases, and the other cases are similar. Suppose $h_\alpha\leqslant h_\gamma\leqslant h_\beta$. We think of $\alpha$ as $S^{i+j+k}\to\Omega^{1+h_\alpha}\Sigma^{1+h_\alpha}S^{j+k}$, $\beta$ as $S^{j+k}\to\Omega^{1+h_\beta}\Sigma^{1+h_\beta}S^k$, and $\gamma$ as $S^k\to\Omega^{1+h_\gamma}\Sigma^{1+h_\gamma}S^0$. In order to form an unstable Toda bracket, we have to be able to compose these maps. To compose $\gamma$ and $\beta$, we think of $\gamma$ as the extension of the composite
$$S^k\to\Omega^{1+h_\gamma}\Sigma^{1+h_\gamma}S^0\stackrel{E^{h_\beta-h_\gamma}}{\lra}\Omega^{1+h_\beta}\Sigma^{1+h_\beta}S^0$$
to an $\Omega^{1+h_\beta}$-map $\Omega^{1+h_\beta}\Sigma^{1+h_\beta}S^k\to\Omega^{1+h_\beta}\Sigma^{1+h_\beta}S^0$ where $E^{h_\beta-h_\gamma}$ is the iterated suspension map. Similarly, we think of $\beta$ as its extension to an $\Omega^{1+h_\beta}$-map $S^{j+k}\to\Omega^{1+h_\beta}\Sigma^{1+h_\beta}S^k$. Finally, we consider $\alpha$ as the composite
$S^{i+j+k}\to\Omega^{1+h_\alpha}\Sigma^{1+h_\alpha}S^{j+k}\stackrel{E^{h_\beta-h_\alpha}}{\lra}\Omega^{1+h_\beta}\Sigma^{1+h_\beta}S^{j+k}$. We then may form the diagram
$$\xymatrix{
S^{i+j+k}\ar[r]^-\alpha &\Omega^{1+h_\beta}\Sigma^{1+h_\beta}S^{j+k}\ar[r]^-\beta\ar[d]
&\Omega^{1+h_\beta}\Sigma^{1+h_\beta}S^{k}\ar[r]^-\gamma\ar[d]&\Omega^{1+h_\beta}\Sigma^{1+h_\beta}S^0\\
                        & C_\alpha\ar[ru]\ar[d]                                        & C_\beta\ar[ru]_-{\gamma_\sharp}\\
                        & S^{i+j+k+1}\ar[ru]_-{\alpha^\flat}
                        }$$
with the composition $\gamma_\sharp\circ\alpha^\flat$ representing $\{\alpha,\beta,\gamma\}$ as an element of the unstable group ${_2\pi_{i+j+k+1}}\Omega^{1+h_\beta}\Sigma^{1+h_\beta}S^0$, up to the indeterminacy in choosing the extension and co-extension maps. The homology of the composite $\gamma_\sharp\circ\alpha^\flat$ for any of such choices, determines $h(\{\alpha,\beta,\gamma\})$. The assumption that $h(\{\alpha,\beta,\gamma\})\neq 0$ implies that this Toda bracket represents an element of ${_2\pi_{i+j+k+h_\beta+1}}S^{h_\beta+1}$ as a pull back of the element of ${_2\pi_{i+j+k+1}^s}$ represented by the stable Toda bracket. This then tells us that the unstable element $\{\alpha,\beta,\gamma\}$ at least is born on $S^{1+h_\beta}$ which may pull back or no. In any case, this shows that
$$h_{\{\alpha,\beta,\gamma\}}\leqslant h_\beta=\max\{h_\alpha,h_\beta,h_\gamma\}.$$
\end{proof}

\section{Hurewicz homomorphism and homotopy operations}\label{operationsection}
The motivation for this section is again provided by Theorem \ref{main0} and Theorem \ref{ideal}. Given $S\subset{\pi_{*>0}^s}$, we consider extensions of $S$ by applying homotopy operations. The operations that we consider here, are of specific type; writing $D_r$ for the $r$-adic construction $(E\Sigma_r)\ltimes_{\Sigma_r}(-)^{\wedge r}$, then a given element $\alpha\in\pi_mD_rS^n$ determines an operation $\alpha^*:\pi_n^s\to\pi_m^s$ which sends $f\in\pi_n^s$ to the element given by the composite
$$S^m\lra D_rS^n\stackrel{D_rf}{\lra}D_rS^0={B\Sigma_r}_+\lra S^0$$
where $D_rS^0\lra S^0$ is induced by the multiplication of $S^0$ as an $H_\infty$ spectrum. Of course, it is not guaranteed that such operations always will exist or give rise to new elements. But, when they are defined then we can ask about their Hurewicz image. First, we record the following.

\begin{lmm}
Let $f:X\to Y$ be a stable map, i.e. a map of suspension spectra, between path connected complexes such that $\Omega^\infty f:QX\to QY$ is trivial in homology. Then $\Omega^\infty D_rf:QD_rX\to QD_rY$ is trivial in homology.
\end{lmm}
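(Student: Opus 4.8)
The plan is to reduce both the hypothesis and the desired conclusion to the vanishing of the map induced by $f$ on reduced mod $2$ homology, and then to exploit the description of $H_*(Q(-);\Z/2)$ as a free object under the Dyer--Lashof operations.

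First I would record the following criterion, valid for any stable map $g:A\to B$ between path connected complexes: $\Omega^\infty g:QA\to QB$ is trivial in $\Z/2$-homology if and only if $g_*=0:\widetilde H_*(A;\Z/2)\to\widetilde H_*(B;\Z/2)$. Indeed, by the computations recalled in the preliminaries, $H_*(QA;\Z/2)$ is the polynomial algebra on the admissible monomials $Q^Ia_\mu$, where $\{a_\mu\}$ is a basis of $\widetilde H_*(A;\Z/2)$; the map $(\Omega^\infty g)_*$ is a ring homomorphism commuting with the operations $Q^i$, and it is determined on generators by $(\Omega^\infty g)_*Q^Ia_\mu=Q^Ig_*a_\mu$. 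If $g_*=0$ then $Q^Ig_*a_\mu=Q^I0=0$ for every generator, so $(\Omega^\infty g)_*$ kills all of $\widetilde H_*(QA;\Z/2)$; conversely, taking $I=\phi$ gives $(\Omega^\infty g)_*a_\mu=g_*a_\mu$, so nonvanishing of $g_*$ forces nonvanishing of $(\Omega^\infty g)_*$.

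Applying the criterion to $g=f$ turns the hypothesis into $f_*=0$ on $\widetilde H_*(X;\Z/2)$, and applying it to $g=D_rf$ reduces the whole lemma to proving that $(D_rf)_*=0:\widetilde H_*(D_rX;\Z/2)\to\widetilde H_*(D_rY;\Z/2)$. Here I would invoke the description from the Snaith-splitting discussion: via the height (weight) filtration, $\widetilde H_*(D_rX;\Z/2)$ is identified with the span of the height-$r$ monomials $\prod_a Q^{I_a}x_{\mu_a}$, those with $\sum_a 2^{l(I_a)}=r$, inside $H_*(QX;\Z/2)$. Under this identification $(D_rf)_*$ is the restriction of $(\Omega^\infty f)_*=(Qf)_*$ to the height-$r$ summand, this being the formula $(Qf)_*\big(\prod_a Q^{I_a}x_{\mu_a}\big)=\prod_a Q^{I_a}f_*x_{\mu_a}$, which also shows that $(Qf)_*$ preserves the height filtration. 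Since $f_*=0$, each factor $Q^{I_a}f_*x_{\mu_a}$ vanishes and hence so does every height-$r$ monomial; therefore $(D_rf)_*=0$, and by the criterion $\Omega^\infty D_rf$ is trivial in homology.

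The one point needing care is the compatibility asserted in the last step, namely that the homology map $(D_rf)_*$ agrees with the restriction of $(Qf)_*$ to the weight-$r$ summand under the Snaith identification; this is exactly the naturality of the James--Hopf maps $j_r$, so that $\Omega^\infty D_rf\circ j_r=j_r\circ\Omega^\infty f$, applied to the stable map $f$ through $D_r(\Sigma^\infty X)\simeq\Sigma^\infty D_rX$. Once this naturality is in hand the argument is purely formal, since the hypothesis $f_*=0$ in fact forces $(Qf)_*$ to vanish on all of $\widetilde H_*(QX;\Z/2)$, not merely on the height-$r$ part. An equivalent route, avoiding $j_r$, is to observe that $\widetilde H_*(D_rX;\Z/2)$ is built functorially out of the $\Sigma_r$-module $\widetilde H_*(X;\Z/2)^{\otimes r}$, with $(D_rf)_*$ induced by $f_*^{\otimes r}$, which is zero as soon as $f_*$ is.
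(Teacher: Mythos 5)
Your opening ``criterion'' is false, and its failure is located exactly where the content of the lemma lies. For a general stable map $g:\Sigma^\infty A\to\Sigma^\infty B$ the formula $(\Omega^\infty g)_*Q^Ia_\mu=Q^Ig_*a_\mu$ is not available: the generator-level formula recalled in the preliminaries holds for $Qf=\Omega^\infty\Sigma^\infty f$ with $f$ a map of \emph{spaces}. What naturality of the Dyer--Lashof operations gives for an arbitrary infinite loop map is $(\Omega^\infty g)_*Q^Ia_\mu=Q^I\widetilde{g}_*a_\mu$, where $\widetilde{g}:A\to QB$ is the stable adjoint; and $\widetilde{g}_*a_\mu\in H_*(QB;\Z/2)$ need not equal $g_*a_\mu$ --- only its weight-one Snaith component is $g_*a_\mu$, while higher-weight components can survive even when $g_*=0$. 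Concretely, take $g=\eta:\Sigma^\infty S^2\to\Sigma^\infty S^1$. Then $g_*=0$ for dimensional reasons, yet $(\Omega^\infty\eta)_*g_2=Q^1g_1=g_1^2\neq 0$ in $H_2(QS^1;\Z/2)$ because $\eta$ has Hopf invariant one (this is the class $e_*h(\eta)$ used elsewhere in the paper). So the implication $g_*=0\Rightarrow(\Omega^\infty g)_*=0$ is false; since $D_1f=f$, your own argument at $r=1$ would prove exactly this false implication. The same example refutes the naturality $\Omega^\infty D_rf\circ j_r=j_r\circ\Omega^\infty f$ you invoke for stable $f$: one has $(j_2)_*g_2=0$ since $g_2$ has height one, while $(j_2)_*(\Omega^\infty\eta)_*g_2=(j_2)_*Q^1g_1\neq 0$; the James--Hopf maps are natural for space-level maps, not for stable ones. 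Thus your final step --- passing from $(D_rf)_*=0$ in spectrum homology to $(\Omega^\infty D_rf)_*=0$ --- uses the false direction of the criterion, and the proof collapses at precisely the point the lemma is about. Note also that your reduction only ever uses $f_*=0$, which is strictly weaker than the actual hypothesis $(\Omega^\infty f)_*=0$, so no correct proof can follow your route: the statement ``$f_*=0\Rightarrow(\Omega^\infty D_rf)_*=0$'' is simply not true.

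What is genuinely needed, and what the paper uses, is Kuhn's theorem \cite[Theorem 2.7]{Kuhn-suspension}: for path connected complexes, $D_rf$ factors as $\Sigma^\infty D_rX\to\Sigma^\infty QX\stackrel{\Sigma^\infty\Omega^\infty f}{\lra}\Sigma^\infty QY\to\Sigma^\infty D_rY$, with the outer maps coming from the Snaith splitting. Applying $\Omega^\infty$, the middle map becomes $Q(\Omega^\infty f)$, i.e.\ $Q$ of a genuine map of \emph{spaces}, to which the generator formula legitimately applies: $(\Omega^\infty f)_*=0$ forces $Q(\Omega^\infty f)_*=0$, hence $(\Omega^\infty D_rf)_*=0$. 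This factorization is exactly the device that propagates the hypothesis about the space-level map $\Omega^\infty f$ (rather than about $f_*$) through the functor $D_r$, and it is the ingredient missing from your proposal.
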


\begin{proof}
Since $X$ and $Y$ are path connected, then by \cite[Theorem 2.7]{Kuhn-suspension} we have a factorisation of $D_rf$ as
$$D_rf:\Sigma^\infty D_rX\to\Sigma^\infty QX\stackrel{\Sigma^\infty\Omega^\infty f}{\longrightarrow}\Sigma^\infty QY\to \Sigma^\infty D_rY.$$
Hence, upon applying $\Omega^\infty$ we have a factorisation for $\Omega^\infty (D_rf)$ as
$$\Omega^\infty(D_rf):QD_rX\to QQX\stackrel{Q(\Omega^\infty f)}{\longrightarrow} QQY\to \Sigma^\infty QD_rY$$
Now, the map at the middle, $Q(\Omega^\infty f)$, is induced by applying $Q$ to the map of spaces $\Omega^\infty f: QX\to QY$, then $(\Omega^\infty f)_*=0$ implies that $Q(\Omega^\infty f)_*=0$ and consequently $(\Omega^\infty D_rf)_*=0$. This completes the proof.
\end{proof}

The following then is more or less expected.

\begin{prp}
Let $\alpha\in\pi_mD_rS^n$ with $n>0$. Suppose $f\in{_2\pi_n^s}$ such that it maps trivially under $h:{_2\pi_*}Q_0S^0\to H_*(Q_0S^0;\Z/2)$. Then $h(\alpha^*(f))=0$.
\end{prp}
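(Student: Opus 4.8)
The plan is to compute $h(\alpha^*(f))$ through the stable adjoint of the defining composite and to reduce the whole statement to the homological triviality of $\Omega^\infty D_rf$. As in Subsections \ref{iteratedloop} and \ref{adjointtoda}, taking stable adjoints of $S^m\stackrel{\alpha}{\lra}D_rS^n\stackrel{D_rf}{\lra}D_rS^0\stackrel{\mu}{\lra}S^0$ shows that the adjoint $S^m\to Q_0S^0$ of $\alpha^*(f)$ factors as
$$S^m\stackrel{\widetilde\alpha}{\lra}QD_rS^n\stackrel{\Omega^\infty D_rf}{\lra}QD_rS^0\stackrel{\Omega^\infty\mu}{\lra}Q_0S^0,$$
so that $h(\alpha^*(f))=(\Omega^\infty\mu)_*(\Omega^\infty D_rf)_*\widetilde\alpha_*(\iota_m)$, where $\iota_m\in H_m(S^m;\Z/2)$ is the fundamental class. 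Since $n>0$, the space $D_rS^n$ is $(rn-1)$-connected, and (for $m>0$) the class $\widetilde\alpha_*(\iota_m)$ lies in the augmentation ideal of $H_*(QD_rS^n;\Z/2)$. Hence it is enough to prove that $\Omega^\infty D_rf$ is trivial on reduced $\Z/2$-homology; the whole composite then annihilates $\widetilde\alpha_*(\iota_m)$.

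First I would record that $h(f)=0$ forces $\Omega^\infty f=Qf:QS^n\to QS^0$ to be trivial on reduced homology. This is the usual computation for an infinite loop map: $Qf$ commutes with the Dyer--Lashof operations and restricts on the bottom cell $S^n\subset QS^n$ to the adjoint $\widetilde f$, so on the polynomial generators of $H_*(QS^n;\Z/2)$ one has $(Qf)_*Q^I\iota_n=Q^I(Qf)_*\iota_n=Q^Ih(f)=0$. As $Qf$ respects the Pontrjagin product and the $Q^I\iota_n$ generate the augmentation ideal, $(Qf)_*=0$ in positive degrees. Granting this, the preceding Lemma applied to $f$ yields at once that $\Omega^\infty D_rf$ is trivial in homology, which is precisely the input needed above.

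The single delicate point, which I expect to be the main obstacle, is that the preceding Lemma is stated for a stable map between \emph{path connected} complexes, whereas here the target $Y=S^0$ is not path connected (the source causes no difficulty, since $n>0$). Morally this is harmless: because $QS^n$ is connected, $Qf$ already lands in the component $Q_0S^0$. To make it rigorous I would extend the Lemma's factorisation of $D_rf$ through Kuhn's natural transformation to the case $Y=S^0$, using the extended stable James--Hopf maps $QS^0\to QD_rS^0$ that are available for non path connected $S^0$, as recorded in the subsection on the stable splitting of $QX$. Alternatively one may suspend, replacing $f$ by $\Sigma f:S^{n+1}\to S^1$ between path connected spaces with $h(\Sigma f)=e_*h(f)=0$, apply the Lemma there, and descend again; but this route must be handled with care, since $D_r$ does not commute naively with suspension of spectra. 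Modulo this connectivity bookkeeping, $(\Omega^\infty D_rf)_*=0$ in positive degrees is forced, and therefore $h(\alpha^*(f))=0$.
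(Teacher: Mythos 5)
Your reduction is exactly the paper's: pass to the stable adjoint $S^m\to QD_rS^n\to QD_rS^0\to Q_0S^0$, note that $h(f)=0$ forces $(\Omega^\infty f)_*=0$ on reduced homology (via $(\Omega^\infty f)_*Q^I\iota_n=Q^Ih(f)$), and invoke the preceding Lemma to kill $(\Omega^\infty D_rf)_*$. You have also correctly isolated the one genuine difficulty: the Lemma's hypothesis that \emph{both} complexes be path connected fails for the target $S^0$, so the Lemma does not apply to $f$ itself. But at precisely this point your proof stops being a proof. Your second route (suspend to $\Sigma f:S^{n+1}\to S^1$ and ``descend again'') does not merely require care; it fails outright, because $D_r$ does not commute with suspension even up to a shift --- for instance $D_2S^n\simeq\Sigma^nP_n$ is a stunted projective spectrum whose homotopy type changes with $n$ --- so there is no comparison map through which triviality of $(\Omega^\infty D_r\Sigma f)_*$ could be transported to $(\Omega^\infty D_rf)_*$. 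Your first route (extend Kuhn's factorisation of $D_rf$ through $\Sigma^\infty Q(-)$ to the non-connected target $S^0$ using the extended James--Hopf maps) is exactly the technical development the paper explicitly declines to carry out: the preliminaries record only the \emph{existence} of extensions $QX\to QD_rX$ for non-connected $X$, with the remark that their details are omitted because they are never used. For $QS^0$ the relevant splitting concerns the group completion $\Omega B(CX)$ rather than $CX$ itself, so the naturality you need is not ``bookkeeping'' already on record; as written, the claim that $(\Omega^\infty D_rf)_*=0$ ``is forced'' is an unproved assertion.

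The missing idea in the paper is a clean sidestep via the Kahn--Priddy theorem: factor $f$, up to an automorphism of ${_2\pi_n^s}$, as a composite of stable maps $S^n\stackrel{\overline f}{\lra}P\stackrel{\lambda}{\lra}S^0$ with $P=\R P^\infty$, taking $\overline f$ to be the stable adjoint of $t\circ f^s$ where $t=\Omega j_2$ is the right inverse to $\lambda$. Since $t$ is a monomorphism in $\Z/2$-homology and $h(f)=0$, one gets $(\Omega^\infty\overline f)_*=0$; and now $\overline f$ is a stable map between \emph{path connected} complexes, so the preceding Lemma legitimately yields $(\Omega^\infty D_r\overline f)_*=0$. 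By functoriality of $D_r$ on spectra, the composite computing $h(\alpha^*(f))$ factors through $\Omega^\infty D_r\overline f$ followed by $\Omega^\infty D_r\lambda$, hence is trivial in homology; the odd-multiple (automorphism) ambiguity is harmless for the mod $2$ Hurewicz image, since $t\lambda$ is a $2$-local equivalence. So: right skeleton, correct diagnosis of the obstacle, but the obstacle itself is left unresolved --- and the resolution is not an extension of the Lemma but a factorisation of $f$ through a connected complex.
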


\begin{proof}
In order to compute $h(\alpha^*(f))\in H_*(Q_0S^0;\Z/2)$ we need to compute the homology of the stable adjoint of $S^m\lra D_rS^n\stackrel{D_rf}{\lra}D_rS^0=Q({B\Sigma_r}_+)\lra S^0$ given by
$$S^m\lra QD_rS^n\stackrel{\Omega^\infty D_rf}{\lra}QD_rS^0=Q({B\Sigma_r}_+)\lra QS^0$$
noting that $f:S^n\to S^0$ is a map of suspension spectra with $S^n$ being path connected. By Kahn Priddy theorem, we may factorise $f$ as a composite of stable maps $S^n\stackrel{\overline{f}}{\to} P\stackrel{\lambda}{\to}S^0$. For a moment, writing $f^s:S^n\to Q_0S^0$ for the stable adjoint of $f$, a choice for $\overline{f}$, up to an automorphism of ${_2\pi_n^s}$, is obtained by considering $t\circ f^s:S^n\to QP$ and then taking its stable adjoint which yields a stable map $S^n\to P$. Now, by Kahn-Priddy theorem, for such a choice of $\overline{f}$, we have $\lambda\overline{f}=f$, up to an automorphism of ${_2\pi_n^s}$, and that $(\Omega^\infty\overline{f})_*=0$. Now, for computing $h\alpha^*(f)$ we consider the composition
$$S^m\lra QD_rS^n\stackrel{\Omega^\infty D_r\overline{f}}{\lra}QD_rP\stackrel{\Omega^\infty D_r\lambda}{\lra}QD_rS^0=Q({B\Sigma_r}_+)\lra QS^0.$$
Since $\overline{f}:S^n\to P$ is stable map between connected complexes, then by the above lemma, the equality $(\Omega^\infty\overline{f})_*=0$ implies that $(\Omega^\infty D_r\overline{f})_*=0$, and consequently the above composition is trivial in homology. Since the composition $t\lambda$ is a $2$-local equivalence, then it induces an isomorphism in $\Z/2$-homology. So, the result in the above computations, remains unchanged after multiplying by an automorphism of ${_2\pi_n^s}$. This then completes the proof.
\end{proof}


As an application, let $\tau_i$ denote Bruner's family which is constructed using homotopy operation $\cup_1$ as $\tau_i:=\cup_1(\eta_i)-\eta\eta_{i+1}$ \cite{Bruner}. We have the following.

\begin{lmm}
The $\tau_i$ elements of Bruner family, map trivially under the Hurewicz homomorphism ${_2\pi_{2^{i+1}+1}}Q_0S^0\to H_{2^{i+1}+1}Q_0S^0$.
\end{lmm}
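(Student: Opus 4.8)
The plan is to read off $\tau_i$ as a combination of a $D_2$-homotopy operation applied to Mahowald's $\eta_i$ together with a decomposable product, and to kill each piece separately using results already in hand. Recall $\eta_i\in{_2\pi_{2^i}^s}$, so that $\cup_1(\eta_i)\in{_2\pi_{2^{i+1}+1}^s}$ and $\eta\eta_{i+1}\in{_2\pi_{2^{i+1}+1}^s}$, matching the degree of $\tau_i$. The first observation is that Bruner's $\cup_1$ is precisely an operation of the type treated in the preceding Proposition: since $\eta_i$ is of order two, $\cup_1(\eta_i)$ is represented by a composite
$$S^{2^{i+1}+1}\stackrel{\alpha}{\lra}D_2S^{2^i}\stackrel{D_2\eta_i}{\lra}D_2S^0={B\Sigma_2}_+\lra S^0$$
for a class $\alpha\in\pi_{2^{i+1}+1}D_2S^{2^i}$ detecting the $\cup_1$ cell, the last map being the $H_\infty$ structure map of $S^0$; that is, $\cup_1(\eta_i)=\alpha^*(\eta_i)$ in the notation of that Proposition. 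As $n=2^i>0$ and, by Lemma \ref{eta}, $h(\eta_i)=0$, the preceding Proposition immediately gives $h(\cup_1(\eta_i))=0$. Any indeterminacy in $\cup_1(\eta_i)$ only amounts to a different choice of $\alpha$, so every representative still has vanishing Hurewicz image.

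Next I would dispose of the term $\eta\eta_{i+1}$. This is a product of $\eta\in{_2\pi_1^s}$ and $\eta_{i+1}\in{_2\pi_{2^{i+1}}^s}$, two elements living in distinct positive degrees $1\neq 2^{i+1}$. Hence Lemma \ref{ij} applies verbatim and yields $h(\eta\eta_{i+1})=0$; alternatively this is immediate from Theorem \ref{main0}, since a nonzero product in positive degrees forces the two factors to coincide. Since the mod $2$ Hurewicz homomorphism is additive,
$$h(\tau_i)=h(\cup_1(\eta_i))-h(\eta\eta_{i+1})=0,$$
which is the claim.

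The only genuinely nontrivial point, and the step I would be most careful about, is the identification of $\cup_1$ with a homotopy operation $\alpha^*$ arising from $\pi_*D_2S^{2^i}$: one must appeal to Bruner's definition of $\cup_1$ via the quadratic (extended power) construction and check that the structure map $D_2S^0\to S^0$ used there is the $H_\infty$ multiplication appearing in the preceding Proposition. Once this bookkeeping is in place, together with the degree count $2\cdot 2^i+1=2^{i+1}+1$, the two hypotheses of the Proposition (namely $n>0$ and $h(\eta_i)=0$) are met and the remainder of the argument is purely formal.
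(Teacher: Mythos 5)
Your proposal is correct and follows essentially the same route as the paper: the paper likewise applies the preceding proposition on homotopy operations (using $h(\eta_i)=0$ from Lemma \ref{eta}) to kill $h(\cup_1(\eta_i))$, invokes Theorem \ref{main0} to kill $h(\eta\eta_{i+1})$ since $\eta$ and $\eta_{i+1}$ lie in different gradings, and concludes by additivity of $h$. Your added care in identifying Bruner's $\cup_1$ explicitly as an operation $\alpha^*$ arising from $\pi_{2^{i+1}+1}D_2S^{2^i}$, and in noting that the indeterminacy only changes the choice of $\alpha$, is a sound elaboration of what the paper leaves implicit.
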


\begin{proof}
By the above proposition, since $h(\eta_i)=0$ then $h(\cup_1(\eta_i))=0$. Moreover, since $\eta$ and $\eta_{i+1}$ are not in the same grading, then by Theorem \ref{main0} $h(\eta\eta_{i+1})=0$. Hence, $h(\tau_i)=0$.
\end{proof}

\section{Hurewicz homomorphism and Adams filtration}
This section is devoted to obtain a numerical conditions on the elements of ${_2\pi_*^s}$ which map nontrivially. Let $D_k$ be the Dickson algebra on $k$ variables and $A$ be the mod $2$ Steenrod algebra. Recall the Lannes-Zarati homomorphism
$$\varphi_k:\ext_A^{k,k+i}(\Z/2,\Z/2)\lra(\Z/2\otimes_A D_k)_i^*$$
where its domain is the $E_2$-term of the Adams spectral sequence. By \cite[Corollary 3.3, Proposition 3.5]{Madsen}, and decomposition of Steenrod sqaures into composition of $Sq^{2^t}$, the target of $\varphi_k$ is the $R$-submodule of $H_*(Q_0S^0;\Z/2)$ generated by elements of length $k$, (see \cite{HungPeterson} for more detailed discussion as well as \cite[Theorem 1.6]{Ku}). Moreover, we know by Kuhn's work \cite[Corollary 1.7]{Ku} that such a statement holds after replacing $S^0$ with $\Sigma^\infty X$ for any space $X$, and that if $X$ itself is a suspension, then we may consider a map, analogue to $\varphi_k$, which at the prime $p=2$ looks like
$$\ext_A^{s,t}(H^*X,\Z/2)\to \mathcal{R}_sH_*X\subset H_*(D_{2^s}X)$$
which we continue to denote by $\varphi_k$, and is induced by the Hurewicz homomorphism ${_2\pi_*}QX\to H_*QX$. We shall use
$$H_*(Q_0S^0;\Z/2)\simeq\Z/2[Q^I[1]*[-2^{l(I)}]:I\textrm{ admissible}]$$
so the image of a cycle in $\ext_A^{k,k+i}(\Z/2,\Z/2)$ under $\varphi_k$, modulo decomposable terms, will be a linear combination of classes $Q^I[1]*[-2^{l(I)}]$ with $l(I)=k$. Here, the action of the homology suspension on the generators is determined by
$$e_*(Q^I[1]*[-2^{l(I)}])=Q^Ig_1$$
where $g_1\in H_1S^1$ is a generator. We then have the following.
\begin{lmm}
Suppose $f\in{_2\pi_n}Q_0S^0$ of Adams filtration $k$, i.e. $k$ is the least positive integer where $f$ is represented by a cycle in $\ext^{k,k+n}_A(\Z/2,\Z/2)$, and $h(f)\neq 0$ where $h:{_2\pi_n}Q_0S^0\to H_n(Q_0S^0;\Z/2)$ is the Hurewicz homomorphism. The following statement then hold.\\
(i) If $e_*h(f)\neq 0$ then $n\geqslant 2^k-1$.\\
(ii) If $e_*h(f)=0$, $h(f)=\xi^{2^t}$ with $e_*\xi\neq 0$, and $h(f)\in\im(\varphi_k)$ then $n\geqslant 2^{k}-2^t$.
\end{lmm}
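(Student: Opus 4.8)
The plan is to reduce both parts to a single elementary estimate on the internal degree of length-$k$ monomials, and to exploit the homology suspension $e_*$ to move the problem from $Q_0S^0$, where the relevant generators sit over the $0$-dimensional class $[1]$, to $QS^1$, where they sit over the $1$-dimensional class $g_1$. The estimate I would isolate first is this: for an admissible $Q^I$ of length $l(I)=\ell$ acting on a generator $g$ of dimension $d$, if $Q^Ig\neq 0$ then $\dim Q^Ig\geq 2^\ell d$. This follows by an immediate induction on $\ell$: writing $Q^I=Q^{i_1}Q^{I'}$, nonvanishing of $Q^Ig$ forces $Q^{I'}g\neq 0$ and $i_1\geq\dim Q^{I'}g$, whence $\dim Q^Ig=i_1+\dim Q^{I'}g\geq 2\dim Q^{I'}g\geq 2\cdot 2^{\ell-1}d$. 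Applied to $g=g_1$ with $d=1$, this says that every nonzero length-$k$ class $\int Q^Ig_1$ in $H_*(QS^1;\Z/2)$ lives in dimension at least $2^k$.

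For part (i) I would record that, since $f$ has Adams filtration $k$, the length-$k$ (indecomposable) part of $h(f)$ is exactly the image under $\varphi_k$ of the cycle in $\ext_A^{k,k+n}(\Z/2,\Z/2)$ detecting $f$, so that modulo decomposables $h(f)=\int Q^I[1]*[-2^{l(I)}]$ with every $l(I)=k$. The hypothesis $e_*h(f)\neq 0$ is precisely what guarantees this indecomposable part is nonzero: $e_*$ kills decomposables and, by Lemma \ref{kernelofsuspension}, is injective on indecomposables. Applying $e_*$ then yields a nonzero class $e_*h(f)=\int Q^Ig_1\in H_{n+1}(QS^1;\Z/2)$ in which every term has length $k$, and by the degree estimate each such term has dimension $\geq 2^k$. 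Hence $n+1\geq 2^k$, that is $n\geq 2^k-1$. This also clarifies the role of passing to $QS^1$: over $[1]$ the estimate $\dim Q^I[1]\geq 2^k\dim[1]=0$ is vacuous, and only after suspending onto the positive-dimensional class $g_1$ does it become effective.

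For part (ii) I would reduce to part (i) by taking a square root. Since $h(f)=\xi^{2^t}\in\im(\varphi_k)$ has height $2^k$, and the $2^t$-th power operation multiplies height by $2^t$, the class $\xi$ has height $2^{k-t}$; thus $\xi$ is a length-$(k-t)$ class with $\dim\xi=n/2^t$. The hypothesis $e_*\xi\neq 0$ says the indecomposable part of $\xi$ is nonzero, so again by Lemma \ref{kernelofsuspension} we get $e_*\xi=\int Q^Jg_1\neq 0$ with every $l(J)=k-t$. The degree estimate now gives $\dim\xi+1=\dim(e_*\xi)\geq 2^{k-t}$, so $n/2^t\geq 2^{k-t}-1$ and therefore $n\geq 2^t(2^{k-t}-1)=2^k-2^t$. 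In effect case (ii) is case (i) applied to $\xi$ with $k$ replaced by $k-t$, the power $\xi^{2^t}$ accounting for the fact that $e_*h(f)=0$ while $e_*\xi\neq 0$.

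The main obstacle is not the degree estimate, which is elementary, but the identification in part (i) of the length-$k$ indecomposable part of $h(f)$ with $\varphi_k$ of its detecting cycle, so that Adams filtration $k$ forces every surviving indecomposable generator to have length \emph{exactly} $k$ rather than some smaller length (which would only yield the weaker bound $n\geq 2^{k'}-1$). This is where the Lannes--Zarati/Kuhn description of $\im(\varphi_k)$ as the length-$k$ submodule, together with the compatibility of $\varphi_k$ with the Hurewicz homomorphism on the $E_\infty$-page, must be invoked with care. A secondary point to check is the height bookkeeping under the $2^t$-th power in part (ii): one must verify that $\xi$ may be taken homogeneous of height $2^{k-t}$ and that its indecomposable generators all have length $k-t$, so that the estimate is applied at the correct length.
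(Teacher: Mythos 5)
Your proof is correct, and its global skeleton is the same as the paper's: use the Lannes--Zarati/Kuhn identification to force the lengths of the indecomposable generators appearing in $h(f)$ to be at least the Adams filtration, pass to $QS^1$ via the homology suspension $e_*$ (with Lemma \ref{kernelofsuspension} guaranteeing the indecomposable part survives), extract a dimension bound, and reduce (ii) to (i) by taking a $2^t$-th root. Where you genuinely diverge is the mechanism for the dimension bound: the paper projects via the stable James--Hopf invariant $j_{2^{l(f)}}:QS^1\to QD_{2^{l(f)}}S^1$ onto the top-height Snaith summand and invokes the \emph{geometric} fact that $D_{2^{l(f)}}S^1$ has its bottom cell in dimension $2^{l(f)}$, whereas you prove the equivalent numerical fact \emph{algebraically}, by the induction $\dim Q^Ig\geqslant 2^{l(I)}\dim g$ forced by the instability condition $Q^ia=0$ for $i<\dim a$, together with linear independence of admissible monomials in the polynomial algebra $H_*(QS^1;\Z/2)$. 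Your route is more elementary --- it dispenses with the Snaith splitting and James--Hopf machinery entirely, needing only one nonzero monomial term in $e_*h(f)$ rather than a projection isolating the maximal-length part --- at the cost of being purely internal to homology, while the paper's version exhibits the obstruction as a connectivity statement about an actual space. One bookkeeping caveat: you assert that the relevant lengths are \emph{exactly} $k$ (and in (ii) that $h(f)$ has height exactly $2^k$), but the paper's quoted form of Kuhn's result only places $h(f)$, modulo decomposables, in the $R$-submodule generated by length-$k$ classes, so the correct statement is $l(I)\geqslant k$ (and height $\geqslant 2^k$, so $\xi$ has components of length $\geqslant k-t$). This is harmless --- larger length only strengthens the inequalities $n+1\geqslant 2^{l(I)}\geqslant 2^k$ and $\dim\xi+1\geqslant 2^{k-t}$ --- and indeed, as you correctly flag, the only direction that must be secured by the Lannes--Zarati/Kuhn input is the lower bound on length, exactly as in the paper.
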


\begin{proof}
(i) Since $h(f)\neq 0$ with $e_*h(f)\neq 0$ then we may write
$$h(f)=\int_{\epsilon_I\in\Z/2}\epsilon_I Q^I[1]*[-2^{l(I)}]+\textrm{ decomposable terms}$$
where the sum runs over admissible sequences $I$ such that there exists at least one $I$ with $\epsilon_I=1$. Consider the adjoint of $f:S^n\to Q_0S^0$ as $f_1:S^{n+1}\to QS^1$. Together with Kuhn's result quoted above, since $h(f_1)\in\im(\varphi_k)$, we may write
$$e_*h(f)=\int_{l(I)\geqslant k,\epsilon_I\in\Z/2}\epsilon_I Q^Ig_1.$$
Let $l(f):=\max\{l(I):\epsilon_I=1\}$, and note that $l(f)\geqslant k$. Write $j_{2^{l(f)}}$ for the $2^{l(f)}$-th stable James-Hopf invariant $QS^1\to QD_{2^{l(f)}}S^1$; it acts like projection on the elements of height $2^{l(f)}$ such as $Q^Ig_1$ with $l(I)=l(f)$, and all term $Q^Jg_1$ with $l(J)<l(f)$ are killed under this map. Hence,
$$(j_{2^{l(f)}})_*h(f)=\int_{l(I)=l(f),\epsilon_I\in\Z/2}\epsilon_I \underline{Q^Ig_1}\neq 0.$$
Here, $\underline{Q^Ig_1}$ denotes the image of $Q^Ig_1$ in $H_*D_{2^{l(f)}}S^1$. The space $D_{2^{l(f)}}S^1$ has its bottom cell in dimension $2^{l(f)}$, and the homology of $H_*QD_{2^{l(f)}}S^1$ vanishes in dimensions below $2^{l(f)}$. Hence, a necessary condition for $h(f)\neq 0$ is that $n+1\geqslant 2^{l(f)}\geqslant 2^k$. This proves the desired inequality.\\
(ii) By Proposition \ref{kernelofsuspension}, $e_*h(f)=0$ implies that $h(f)$ is a decomposable. Hence, being a primitive decomposable, $h(f)$ is a square term. Since, $h(f)\in\im(\varphi_k)$ hence
$$h(f)=\int_{\epsilon_I\in\Z/2,l(I)\geqslant k,\ex(I)=0}\epsilon_I Q^I[1]*[-2^{l(I)}]+ D$$
where $D$ is a sum of terms each of which is a finite product of elements of the form $Q^J[1]*[-2^{l(J)}]$ with $l(J)\geqslant k$. Since, $h(f)$ is a square, we may assume that $h(f)=\xi^{2^t}$ for some $t>0$ where
$$\xi=\int_{\ex(\dot{I})>0} \epsilon_IQ^{\dot{I}}[1]*[-2^{l(\dot{I})}]+D'$$
where $D'$ is a sum of decomposable terms. This means that $e_*\xi\neq 0$. Now, we may apply the same technique as in part (i) to show that $\dim\xi+1\geqslant 2^{l(f)-t}>2^{k-t}$. Hence, noting that $\dim h(f)=2^t(\dim\xi)$, $h(f)\neq 0$ implies that
$$\dim\xi\geqslant 2^{k-t}-1\Rightarrow \dim h(f)\geqslant 2^t(2^{k-t}-1)=2^k-2^t.$$
This completes the proof.

\end{proof}




\section{Final comments}
The aim of this note was to collect some relatively quick observations on the type of spherical classes in $H_*(Q_0S^0;\Z/2)$ that one could derive from using the internal structure of the stable homotopy ring as well as the unstable homotopy groups of spheres. We have not considered using spectral sequence arguments. As pointed out by Peter May, Wellington's Chicago thesis \cite{Wellington}, is a very rich source for such computations. We believe that our computations on the $EHP$-sequence sheds light on the possible applications of the $EHP$-spectral sequence arguments to the problem. We liked to see if there is any result on generating the stable homotopy ring by using homotopy operations, as suggested to the author by Gerald Gaudens. But we were not able to find any structural result in this direction. We hope to come back to some of these points later on.\\

\textbf{Acknowledgements.}
I am grateful to Peter May for his comments on the earlier version of this paper, to Nick Kuhn for many helpful comments and discussions and for pointing out gaps in some arguments of the earlier versions of this work. Thanks also to Gerald Gaudens and Nguy\^{e}n H.V. Hu'ng for discussions on the conjecture, while I was visiting LAERMA at Universite de Angers during October 2014. I am grateful to Gerald Gaudens and Geoffrey Powell for an invitation to LAERMA, and for their hospitality. I acknowledge the partial support from IPM, University of Tehran, and LAERMA which made this visit possible. Last, but not least, I am very grateful to Peter Eccles whom I have learnt from him very much on the topic when I was his student at Manchester; the present paper is an elaboration on what I have learnt from him.

\end{document}